\font\tenmsb=msbm5    \textfont\msbfam=\tenmsb \font\sevenmsb=msbm5
\font\fivemsb=msbm5
\font\tenbig=msbm5 scaled \magstep2   \textfont\bigfam=\tenbig
\font\sevenbig=msbm7 scaled \magstep2 \scriptfont\bigfam=\sevenbig
\font\fivebig=msbm5 scaled \magstep2
\numberwithin{equation}{section}
\newtheorem{theorem}{Theorem}[section]
\newtheorem{lemma}{Lemma}[section]
\newtheorem{remark}{Remark}[section]
\newtheorem{corollary}{Corollary}[section]
\newtheorem{definition}{Definition}[section]
\begin{document}
	\title{\bf Multilinear Fractional Integral Operators with Generalized Kernels }
	\author{\bf  Yan Lin,  Yuhang Zhao and Shuhui  Yang$^*$}
	\renewcommand{\thefootnote}{}
	\date{}
	\maketitle
	\footnotetext{2020 Mathematics Subject Classification. 42B25, 26A33, 42B35}
	\footnotetext{Key words and phrases.   multilinear fractional integral operators with generalized kernel,  multilinear commutators, variable exponent Lebesgue spaces, weighted Lebesgue spaces}
	\footnotetext{This work was partially supported by the National 
		Natural Science Foundation of China No. 12071052 and Basic Research Foundation of China University of Mining and Technology (Beijing) -- Cultivation of Top Innovative Talents for Doctoral No. BBJ2023058.}
	\footnotetext{$^*$Corresponding author, E-mail:yangshuhui@student.cumtb.edu.cn}
	\begin{minipage}{13.5cm}
		{\bf Abstract}
		\quad
		In this article, we introduce a class of multilinear fractional integral operators with generalized kernels that are weaker than the Dini kernel condition. 
		We establish the boundedness of multilinear fractional integral operators with generalized kernels on weighted Lebesgue spaces and variable exponent Lebesgue spaces, as well as the boundedness of multilinear commutators generated by multilinear fractional integral operators with generalized kernels and {\rm BMO} functions. Even when the generalized kernels condition goes back to the Dini kernel condition, the conclusions on the commutators remain new.
	\end{minipage}
	
\section{Introduction}\label{sec1}
 \qquad  Serving as a generalization of the Hilbert transform to higher-dimensional Euclidean spaces, the Riesz transform, as a classical singular integral operator, is defined by setting, for any $f\in\mathscr{S}({\mathbb{R}}^n)$(the space of all Schwartz functions on ${\mathbb{R}}^n$) and $x\in \mathbb{R}^n$,
\begin{align*}
R_j(f)(x):=c_n\,{\rm p.v.}\int_{{\mathbb{R}}^n}\frac{x_j-y_j}{|x-y|^{n+1}}f(y)dy,\  1\leq j\leq n,
\end {align*}
where $c_n:=\Gamma(\frac{n+1}{2})/{\pi}^{\frac{n+1}{2}}$. The Riesz transforms have wide applications in fields such as image processing, signal processing, and partial differential equations, and are commonly used for tasks such as edge detection and texture analysis \cite{FGL2023,LA2010,ZL2012,ZZM2010}.

The Riesz potential, which plays a crucial role in harmonic analysis,  is also known as the fractional integral operator. Let $0<\alpha<n$ and the Riesz potential is defined by setting, for any $f\in\mathscr{S}({\mathbb{R}}^n)$ and $x\in {\mathbb{R}}^n$,
\begin{align*}
I_{\alpha}(f)(x):=\frac{1}{\gamma(\alpha)}\int_{{\mathbb{R}}^n}\frac{f(y)}{|x-y|^{n-\alpha}}dy,
	\end {align*}
where $\gamma(\alpha):={\pi}^{\frac{n}{2}}2^{\alpha}\Gamma(\frac{\alpha}{2})/\Gamma(\frac{n-\alpha}{2})$. Riesz potentials have important applications in fields such as signal processing, partial differential equations, and probability theory, and have recently been used in image encryption \cite{FLY2024}.

	Due to the different definitions of these two types of operators, they play very different and important roles in image processing. By comparing the above two operators, apart from the coefficients, we can easily observe two differences.
	\begin{enumerate}[(i)] 
	\item The Riesz transform is a principal value integral, while the Riesz potential is not. 
  \item   The Riesz potential involves an additional parameter $\alpha$ compared to the Riesz transform.
	\end{enumerate}

	Yabuta \cite{Y1985} introduced non-convolution $\omega$-type Calder\'{o}n-Zygmund operators which are more general than the Riesz transform. In 2014, Lu and Zhang \cite{LZ2014} introduced multilinear Calder\'{o}n-Zygmund operators with Dini's type kernels and established the boundedness of these operators and multilinear commutators on weighted Lebesgue spaces. In 2016, Zhang and Sun \cite{ZS2016} established the boundedness of multilinear  iterated commutators generated by multilinear Calder\'{o}n-Zygmund operators with Dini's type kernels and {\rm BMO} functions on weighted Lebesgue spaces.
	
	 Recently, Wu and Zhang \cite{WZ2023} proposed multilinear fractional integral operators with Dini's type kernels. They established the boundedness of these operators on weighted Lebesgue spaces and variable exponent Lebesgue spaces, whose weight functions belong to $A_{\vec P,q}$.  Suppose that $\omega\left(t\right): [0,\infty)\to[0,\infty)$ and $\omega\left(t\right)$ is a non-negative, non-decreasing function with $0<\omega\left(1\right)<\infty$. For any $a>0$,
\begin{enumerate}[(i)] 
\item  $\omega$ is said to satisfy the Dini($a$) condition, denoted as $\omega\in {\rm Dini}(a)$, if
\begin{align*}
	|\omega|_{{\rm Dini}(a)}:=\int_0^1\omega^a\left(t\right)\frac{dt}{t}<\infty.
	\end {align*}
\item $\omega$ is said to satisfy the ${\log}^m$-${\rm Dini}(a)$ condition if the following inequality holds
\begin{align*}
	\int_0^1\omega^a\left(t\right)\left(1+\log t^{-1}\right)^m\frac{dt}t<\infty,
	\end {align*}
where $m\in\mathbb{Z}^{+}:=\{1, 2, 3 ,4,\ldots\}$.
\end{enumerate}

  It is straightforward to verify that the ${\log}^m$-${\rm Dini}(a)$ conditon is more stronger than the Dini($a$) condition, and if $0<a_1<a_2<\infty$, then Dini($a_1$) $\subset$ Dini($a_2$).
In particular, if $\omega\in {\rm Dini}(1)$, then 
\begin{align*}
	\sum_{j=0}^\infty\omega\left(2^{-j}\right)\sim\int_0^1\omega\left(t\right)\frac{dt}t<\infty.
	\end {align*}
And if $\omega\in\log$-${\rm Dini}(1)$, meaning
\begin{align*}
	\int_0^1\omega\left(t\right)\left(1+\log t^{-1}\right)\frac{dt}t<\infty,
	\end {align*}
then $\omega\in {\rm Dini}(1)$ and
\begin{align*}
	\sum_{k=1}^\infty k\omega\left(2^{-k}\right)\sim\int_0^1\omega\left(t\right)\left(1+\log t^{-1}\right)\frac{dt}t<\infty.
	\end {align*}

\begin{definition}\label{definition1.1}{\rm(see \cite{WZ2023})}
\rm  Let $0<\alpha<mn$ and $K_{\alpha}\left(x,y_1,\ldots,y_m\right)$ be a locally integrable function defined away from the diagnal $x=y_1=\cdots=y_m$ in $\left({\mathbb{R}}^n\right)^{m+1}$, which is called the $m$-linear fractional integral kernel of type $\omega(t)$, if for some $A>0$, it satisfies the following size condition
	\begin{equation}\label {1.1}
	|K_{\alpha}\left(x,\vec{y}\right)|\leq\frac{A}{\left(\sum_{j=1}^m|x-y_j|\right)^{mn-\alpha}},
	\end{equation}
and the smoothness condition
\begin{equation}\label {1.2}
\left|K_{\alpha}\left(x,\vec{y}\right)-K_{\alpha}\left(x',\vec{y}\right)\right|\leq\frac{A}{\left(\sum_{j=1}^m|x-y_j|\right)^{mn-\alpha}}\omega\left(\frac{|x-x'|}{\sum_{j=1}^m|x-y_j|}\right),
\end{equation}
where $|x-x'|\leq\frac{1}{2}\max_{1\leq j\leq m}|x-y_j|$, and 
\begin{equation}\label {1.3}
\begin{split}
&\left|K_{\alpha}\left(x,y_1,\ldots,y_j,\ldots,y_m\right)-K_{\alpha}\left(x,y_1,\ldots,y_j',\ldots,y_m\right)\right|\\
\leq&\frac{A}{\left(\sum_{j=1}^m|x-y_j|\right)^{mn-\alpha}}\omega\left(\frac{|y_j-y_j'|}{\sum_{j=1}^m|x-y_j|}\right),
\end{split}
\end{equation}
where $|y_j-y_j'|\leq\frac{1}{2}\max_{1\leq j\leq m}|x-y_j|$.

 We say $T_{\alpha}:\mathscr{S}\left({\mathbb{R}}^n\right)\times\cdots\times \mathscr{S}\left({\mathbb{R}}^n\right)\rightarrow \mathscr{S}'\left({\mathbb{R}}^n\right)$ an $m$-linear fractional integral operator with kernel of type $\omega(t)$, $K_{\alpha}\left(x,\vec y\right)$, if
\begin{equation} \label {1.4}
T_{\alpha}(\vec f)\left(x\right):=\int_{\left(\mathbb{R}^n\right)^m}K_{\alpha}\left(x,\vec y\right)\prod_{j=1}^{m}f_j\left(y_j\right)d\vec y,
\end{equation}
where $x\notin\cap_{j=1}^{m}{\rm supp}f_j$ and each $f_j\in \mathscr{S}\left({\mathbb{R}}^n\right)$, $j=1,\ldots,m$.
\end{definition}

  Lin and Xiao \cite{LX2017} introduced a kind of multilinear singular integral operators with generalized kernels and they established the weighted norm inequalities on the product of weighted Lebesgue spaces. Inspired by \cite{LX2017} and \cite{WZ2023}, we focus on a class of generalized fractional integral kernels by attenuting the condition \eqref{1.2}. For $k\in\mathbb{Z}^{+}$,
\begin{equation} \label {1.5}
\begin{aligned}
 &\left(\int_{Q(x,2^{k+2}\sqrt{mn}|x-x'|)^m \setminus Q(x,2^{k+1}\sqrt{mn}|x-x'|)^m}|K_{\alpha}(x,\vec y)-K_{\alpha}(x',\vec y)|^{p_{0}}d\vec y\right)^{\frac{1}{p_0}}\\
\leq &CC_k2^{k(\alpha-\frac{mn}{p_0'})}|x-x'|^{\alpha-\frac{mn}{p_0'}},
 \end{aligned}
\end{equation}
where $Q\left(x,2^{k+2}\sqrt{mn}|x-x'|\right)$denotes the cube centered at $x$ with the sidelength $2^{k+2}\sqrt{mn}|x-x'|$, $\left(p_0,p_0'\right)$ is a pair of positve numbers satisfying $\frac{1}{p_0}+\frac{1}{p_0'}=1$, $1<p_0<\infty$ and $C_k$ is a positve constant depending on $k$.
  	
\begin{definition}\label{definition1.2}
\rm Let $0<\alpha p_0'<mn$ and $T_{\alpha}$ be an $m$-linear fractional integral operator defined by \eqref{1.4}. Then $T_{\alpha}$ is called an $m$-linear fractional integral operator with generalized kernel if the following conditions are satisfied.
\begin{enumerate}[(1)] 
\item The kernel function satisfies the size conditon \eqref{1.1} and the smoothness conditon \eqref{1.5}.
\item  For fixed $1\leq s_1,\ldots,s_m\leq p_0'$ with $\frac{1}{s}=\frac{1}{s_1}+\cdots+\frac{1}{s_m}$, $T_\alpha$ maps $L^{s_{1}}\times\cdots\times L^{s_{m}}\rightarrow L^{\frac{sn}{n-s\alpha},\infty}$.
\end{enumerate}
\end{definition}

\begin{remark}\label{remark 1.1}
\rm As a matter of fact, when $C_k=\omega\left(2^{-k}\right)$, it is straightforward to deduce that the condition \eqref{1.2} implies the condition \eqref{1.5} for any $1<p_0<\infty$. Denote $\varOmega_k:=Q\left(x,2^k\sqrt{mn}|x-x'|\right)$.
\begin{align*}
&\left(\int_{Q(x,2^{k+2}\sqrt{mn}|x-x'|)^m \setminus Q(x,2^{k+1}\sqrt{mn}|x-x'|)^m}|K_{\alpha}(x,\vec y)-K_{\alpha}(x',\vec y)|^{p_{0}}d\vec y\right)^{\frac{1}{p_0}}\\
\leq&\left(\int_{(\varOmega_{k+2})^m\setminus (\varOmega_{k+1})^m}\left[\frac{A}{\left(\sum_{j=1}^{\infty}|x-y_j|\right)^{mn-\alpha}}\omega\left(\frac{|x-x'|}{\sum_{j=1}^{\infty}|x-y_j|}\right)\right]^{p_0}d\vec{y}\right)^{\frac{1}{p_0}}\\
\leq&\left(\int_{(\varOmega_{k+2})^m\setminus (\varOmega_{k+1})^m}\left[\frac{A}{\left(\max\limits_{1\leq j\leq m}|x-y_j|\right)^{mn-\alpha}}\omega\left(\frac{|x-x'|}{\max\limits_{1\leq j\leq m}|x-y_j|}\right)\right]^{p_0}d\vec{y}\right)^{\frac{1}{p_0}}\\
\leq&\left(\int_{(\varOmega_{k+2})^m\setminus (\varOmega_{k+1})^m}\left[\frac{A}{\left(2^k|x-x'|\right)^{mn-\alpha}}\omega\left(2^{-k}\right)\right]^{p_0}d\vec{y}\right)^{\frac{1}{p_0}}\\
\leq&C\omega\left(2^{-k}\right)2^{-k\left(mn-\alpha\right)}|x-x'|^{-\left(mn-\alpha\right)}\left(\int_{(\varOmega_{k+2})^m}d\vec y\right)^{\frac{1}{p_0}}\\
=& C\omega\left(2^{-k}\right)2^{-k\left(mn-\alpha\right)}|x-x'|^{-\left(mn-\alpha\right)}\left(2^{k+2}\sqrt{mn}|x-x'|\right)^{\frac{mn}{p_0}}\\
\leq &C\omega\left(2^{-k}\right)2^{k(\alpha-\frac{mn}{p_0'})}|x-x'|^{\alpha-\frac{mn}{p_0'}}\\
\leq &CC_k2^{k(\alpha-\frac{mn}{p_0'})}|x-x'|^{\alpha-\frac{mn}{p_0'}}.
\end{align*}
\end{remark}
\begin{remark}\label{remark 1.2}
\rm when $\alpha=0$, the $m$-linear fractional integral operator with generalized kernel corresponds precisely to the multilinear Calder\'{o}n-Zygmund operator with generalized kernel studied by Yang, et al \rm\cite{YLL2024}, as well as Gao, et al \rm\cite{GLY2024}.
\end{remark}
The article is structured as follows. In Section 2, we establish the boundedness of multilinear fractional integral operators with generalized kernels and multilinear commutators on weighted Lebesgue spaces. Section 3 focuses on demonstrating the boundedness of multilinear fractional integral operators and multilinear commutators on variable exponent Lebesgue spaces.

Throughout this article, the letter $C$ always denotes a constant independent of the main paramaters involved, whose value may vary from line to line. A cube $Q\subset {\mathbb{R}}^n$ always refers to a cube with sides parallel to the coordinate axes, and we denote its side length by $l\left(Q\right)$. For $t>0$, the notation $tQ$ represents the cube with the same center as $Q$ and side length $l\left(tQ\right)=tl\left(Q\right)$. Denote by $|S|$ the Lebesgue measure and $\chi_{S}$ represents the characteristic function for a measurable set $S\subset {\mathbb{R}}^n$. $B\left(x,r\right)$ denotes the ball centered at $x$ with ridius $r$. The notation $X\sim Y$  indicates that there exists a constant $C>0$ such that $C^{-1}Y\leq X\leq CY$. For any index $1<q\left(x\right)<\infty$, we denote its conjugate index by $q'\left(x\right):=\frac{q\left(x\right)}{q\left(x\right)-1}$. Occasionally, we use the notation $\vec f=\left(f_1,\ldots,f_m\right)$, $T(\vec f):=T\left(f_1,\ldots,f_m\right)$, $d\vec y:=dy_1\cdots dy_m$ and $\left(x,\vec y\right):=\left(x,y_1,\ldots,y_m\right)$ for convenience. For a set $E$ and a positive integer $m$, we use the notation $\left(E\right)^m:=\underbrace{E\times\cdots\times E}_{m}$ sometimes.

\section{The Boundedness on Weighted Lebesgue Spaces}\label{sec2}

\quad\quad The arrangement of this Section is as follows. In Subsection 2.1, we present certain definitions and symbols that will be utilized later on. Subsection 2.2 is dedicated to establishing the pointwise estimate for the sharp maximal function. Subsection 2.3 addresses the boundedness of multilinear fractional integral operators with generalized kernels and multilinear commutators on weighted Lebesgue spaces.

\subsection{Definitions and Lemmas}
\begin{definition}\label{definition2.1}{\rm(see \cite{LDY2007})}
\rm  Let $f\in L_{loc}^{1}({\mathbb{R}}^n)$ and denote by $M$ the usual Hardy-Littlewood maximal operator. For a cube $Q\subset{\mathbb{R}}^n$ and $\delta>0$, the maximal function $M_{\delta}$ is defined by
\begin{align*}
M_{\delta}\left(f\right)\left(x\right):=\left[M\left(|f|^{\delta}\right)\left(x\right)\right]^{\frac{1}{\delta}}=\left(\sup\limits_{Q\ni x }\frac{1}{|Q|}\int_Q|f\left(y\right)|^{\delta}dy\right)^{\frac{1}{\delta}}.
\end{align*}
  Let $M^{\#}$ be the standard sharp maximal function, that is
\begin{align*}
M^{\#}f\left(x\right):=\sup\limits_{Q\ni x }\frac{1}{|Q|}\int_Q|f\left(y\right)-f_Q|dy\sim\sup\limits_{Q\ni x }\inf_c\frac{1}{|Q|}\int_Q|f\left(y\right)-c|dy,
\end{align*}
where, as usual, $f_Q$ denotes the average of $f$ over the cube $Q$ containing the point $x$. The operator $M_{\delta}^{\#}$ is defined as $M_{\delta}^{\#}\left(f\right)\left(x\right):=\left[M^{\#}\left(|f|^{\delta}\right)\left(x\right)\right]^{\frac{1}{\delta}}$.
\end{definition}

\begin{definition}\label{definition2.2}
\rm For $0<\alpha<n$ and $f\in L_{loc}^{1}({\mathbb{R}}^n)$, the standard fractional maximal operator $\mathscr{M}_{\alpha}$ is defined by
\begin{align*}
\mathscr{M}_{\alpha}(f)(x):=\sup\limits_{Q\ni x }\frac{1}{|Q|^{1-\frac{\alpha}{n}}}\int_Q|f\left(y\right)|dy.
	\end{align*}
For any $r>1$, $\mathscr{M}_{\alpha,r}$ is defined by 
\begin{align*}
&\mathscr{M}_{\alpha,r}(f)(x):=\sup\limits_{Q\ni x }|Q|^{\frac{\alpha}{n}}\left(\frac{1}{|Q|}\int_Q|f\left(y\right)|^rdy\right)^{\frac{1}{r}}.
	\end{align*}
\end{definition}

\begin{definition}\label{definition2.3}{\rm(see \cite{WZ2023})}
\rm Let $0<\alpha<mn$ and the multilinear fractional maximal operator $\mathcal{M}_{\alpha}$ is defined by setting. For $\vec f=\left(f_1,\ldots,f_m\right)$ with $f_i\in L_{loc}^{1}({\mathbb{R}}^n)$, $i=1,\ldots,m$, and $x\in{\mathbb{R}}^n$,
\begin{align*}
\mathcal{M}_{\alpha}(\vec f)(x):=\sup\limits_{Q\ni x }|Q|^{\frac{\alpha}{n}}\prod_{j=1}^{m}\frac{1}{|Q|}\int_Q|f_j\left(y_j\right)|dy_j=\sup\limits_{Q\ni x }\prod_{j=1}^{m}\frac{1}{|Q|^{1-\frac{\alpha}{mn}}}\int_Q|f_j\left(y_j\right)|dy_j.
	\end{align*}
For any $r>1$, $\mathcal{M}_{\alpha,r}$ is defined by 
	\begin{align*}
&\mathcal{M}_{\alpha,r}(\vec f)(x):=\sup\limits_{Q\ni x }|Q|^{\frac{\alpha}{n}}\prod_{j=1}^{m}\left(\frac{1}{|Q|}\int_Q|f_j\left(y_j\right)|^rdy_j\right)^{\frac{1}{r}}.
	\end{align*}
\end{definition}

\begin{definition}\label{definition2.4}{\rm(see \cite{M2009})}
\rm Let $\vec P:=\left(p_1,\ldots,p_m\right)$ and $\frac{1}{p}=\frac{1}{p_1}+\cdots+\frac{1}{p_m}$ with $1\leq p_j<\infty\left(j=1,\ldots,m\right)$. Given $\vec\omega:=\left(\omega_1,\ldots,\omega_m\right)$ with nonegative functions $\omega_1,\ldots,\omega_m$ on ${\mathbb{R}}^n$ and $p\leq q<\infty$, we say that $\vec\omega$ satisfies the $A_{\vec P,q}$ condition if

\begin{align*}
\sup_{Q}\left(\frac{1}{|Q|}\int_{Q}v_{\vec{w}}\left(x\right)^{q}dx\right)^{\frac{1}{q}}\prod_{j=1}^{m}\left(\frac{1}{|Q|}\int_{Q}\omega_j\left(x\right)^{-p_j'}dx\right)^{\frac{1}{p_j'}}<\infty,
\end{align*}
where the supremum is taken over all cubes $Q\subset{\mathbb{R}}^n$, $v_{\vec\omega}=\prod_{j=1}^{m}\omega_j$, and $\left(\frac{1}{|Q|}\int_{Q}\omega_j\left(x\right)^{-p_j'}dx\right)^{\frac{1}{p_j'}}$ in the case $p_j=1$ is understood as $\left(\inf_Q\omega_j\right)^{-1}$ when $p_j=1$.
\end{definition}

\begin{definition}\label{definition2.5}{\rm(see \cite{WYL2023})}
\rm Suppose $\vec b:=\left(b_1,\ldots,b_m\right)$ with $b_j\in L^1_{loc}({\mathbb{R}}^n)(j=1,\ldots,m)$. We define the $m$-linear commutator $T_{\vec b,\alpha}$ to be
\begin{align*}
T_{\vec b,\alpha}(\vec f)\left(x\right):=\sum_{i=1}^mT_{\vec b,\alpha}^i(\vec f)\left(x\right).
\end{align*}

\noindent Each term represents the commutator of $T_{\alpha}$ in the $i$-th entry with $b_i$, which is defined by
\begin{align*}
T_{\vec b,\alpha}^i(\vec f)\left(x\right):=b_i\left(x\right)T_{\alpha}\left(f_1,\ldots,f_i,\ldots,f_m\right)\left(x\right)-T_{\alpha}\left(f_1,\ldots,b_if_i,\ldots,f_m\right)\left(x\right).
\end{align*}
\end{definition}

\begin{definition}\label{definition2.6}{\rm(see \cite{L1995})} 
\rm Suppose that $b\in L_{loc}^{1}({\mathbb{R}}^n)$. Let
\begin{align*}
\|b\|_{{\rm BMO}}:=\sup_Q\frac{1}{|Q|}\int_Q|b\left(x\right)-b_Q|dx,
\end{align*}
where the supreme is taken over all cube $Q\subset{\mathbb{R}}^n$, and
\begin{align*}
f_Q:=\frac{1}{|Q|}\int_Qf(y)dy.
\end{align*}
Define
\begin{align*}
{\rm BMO}({\mathbb{R}}^n):=\{f:\|f\|_{\rm BMO}<\infty\}.
\end{align*}

Let $\vec b=(b_1,\ldots,b_m)$, if $b_j\in{\rm BMO}$ for $1\leq j\leq m$, then $\vec b\in {\rm BMO}^m$.
\end{definition}

\begin{lemma}\label{Lemma2.1}{\rm(see \cite{FS1972})} 
\rm Let $0<p,\delta<\infty$ and $\omega$ be any $A_{\infty}$-weight ($A_{\infty}:=\cup_{p\geq 1}A_p$). Then there exists a constant $C>0$ (depending on the $A_{\infty}$ constant of $\omega$), such that the inequalities 
	
	\begin{align*}
	\int\left(M_{\delta}f\left(x\right)\right)^p\omega\left(x\right)dx\leq C\int(M_{\delta}^{\#}f\left(x\right))^p\omega\left(x\right)dx,
	\end{align*}
and 
	\begin{align*}
\left\|M_{\delta}\left(f\right)\right\|_{L^{p,\infty}\left(\omega\right)}\leq C\left\|M_{\delta}^{\#}\left(f\right)\right\|_{L^{p,\infty}\left(\omega\right)},
	\end{align*}
hold for any function $f$ for which the left hand side is finite.
	\end{lemma}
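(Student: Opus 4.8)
The plan is to deduce both inequalities from a single good-$\lambda$ inequality, using only the doubling property of $\omega$ and the $A_\infty$ self-improvement $\omega(E)/\omega(Q)\le C(|E|/|Q|)^{\epsilon}$ valid for measurable $E\subset Q$. First I would normalize $\delta$ away: since $M_{\delta}f=(M(|f|^{\delta}))^{1/\delta}$ and $M_{\delta}^{\#}f=(M^{\#}(|f|^{\delta}))^{1/\delta}$, setting $g:=|f|^{\delta}$ and $q:=p/\delta$ turns the two assertions into $\int (Mg)^{q}\omega\le C\int (M^{\#}g)^{q}\omega$ and $\|Mg\|_{L^{q,\infty}(\omega)}\le C\|M^{\#}g\|_{L^{q,\infty}(\omega)}$ for an arbitrary exponent $q\in(0,\infty)$. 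So it suffices to prove these two statements for $M$ and $M^{\#}$ themselves.

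The heart of the matter is the good-$\lambda$ inequality: there exist $\epsilon>0$ (the $A_\infty$ exponent of $\omega$) and $C>0$ such that for every $\lambda>0$ and every sufficiently small $\gamma>0$,
\[
\omega(\{x: Mg(x)>2\lambda,\ M^{\#}g(x)\le\gamma\lambda\})\le C\gamma^{\epsilon}\,\omega(\{x:Mg(x)>\lambda\}).
\]
To establish it I would work with the dyadic maximal operator $M^{d}$ and a Calder\'on--Zygmund decomposition of the open set $\{M^{d}g>\lambda\}$ into its maximal disjoint dyadic components $\{Q_j\}$, each satisfying $\lambda<\frac{1}{|Q_j|}\int_{Q_j}|g|\le 2^{n}\lambda$ (the passage between $M$ and $M^{d}$, and between a single lattice and the three shifted lattices, costs only harmless dimensional constants and preserves the $A_\infty$ bound). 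For a fixed $Q_j$ that meets $\{M^{\#}g\le\gamma\lambda\}$, subtracting the average $g_{\widehat{Q_j}}$ over the dyadic parent $\widehat{Q_j}$ and splitting $g$ into its restriction to $\widehat{Q_j}$ and the rest, one gets $Mg(x)\le C\lambda + M((g-g_{\widehat{Q_j}})\chi_{\widehat{Q_j}})(x)$ on $Q_j$, and then the weak-$(1,1)$ bound for $M$ together with the definition of $M^{\#}$ gives $|\{x\in Q_j: Mg(x)>2\lambda,\ M^{\#}g(x)\le\gamma\lambda\}|\le C\gamma|Q_j|$. Applying the $A_\infty$ property on each $Q_j$ and summing, using $\sum_j\omega(Q_j)=\omega(\{M^{d}g>\lambda\})\le C\omega(\{Mg>\lambda\})$, yields the claim.

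With the good-$\lambda$ inequality in hand, both conclusions are routine. For the strong-type bound write $\int(Mg)^{q}\omega=q\int_{0}^{\infty}\lambda^{q-1}\omega(\{Mg>\lambda\})\,d\lambda$, bound $\omega(\{Mg>2\lambda\})$ by $\omega(\{Mg>2\lambda,\ M^{\#}g\le\gamma\lambda\})+\omega(\{M^{\#}g>\gamma\lambda\})$, insert the good-$\lambda$ inequality, change variables, and choose $\gamma$ small enough that the resulting term $C\gamma^{\epsilon}2^{q}\int(Mg)^{q}\omega$ can be absorbed into the left side; this absorption is exactly where the a priori finiteness of $\int(Mg)^{q}\omega$ enters, and to avoid assuming it one first truncates $Mg$ at level $N$, runs the argument, and lets $N\to\infty$ by monotone convergence. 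The weak-type inequality follows the same way applied at a single height: $\lambda^{q}\omega(\{Mg>2\lambda\})\le C\gamma^{\epsilon}\lambda^{q}\omega(\{Mg>\lambda\})+\lambda^{q}\omega(\{M^{\#}g>\gamma\lambda\})$, take the supremum over $\lambda$, and absorb. The one genuinely delicate point is the local measure estimate $|E\cap Q_j|\le C\gamma|Q_j|$ inside the good-$\lambda$ inequality — pinning down the constants and confirming that replacing $M$ by the dyadic maximal operator does not destroy the $A_\infty$ self-improvement; everything after that is bookkeeping.
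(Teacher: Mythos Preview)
Your argument is correct and follows the standard Fefferman--Stein good-$\lambda$ route. Note, however, that the paper does not actually prove Lemma~2.1: it is stated with a citation to \cite{FS1972} and used as a black box, so there is no ``paper's own proof'' to compare against --- your sketch simply supplies the classical proof that the authors take for granted.
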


\begin{lemma}\label{Lemma2.2}{\rm(see \cite{LZ2014})} 
\rm Let $0<p<u<\infty$. There exists a positive constant $C=C_{p,u}$ such that the following inequality holds
\begin{align*}
	|Q|^{-\frac{1}{p}}\left\|f\right\|_{L^p\left(Q\right)}\leq C|Q|^{-\frac{1}{u}}\left\|f\right\|_{L^{u,\infty}\left(Q\right)}.
	\end{align*}
	\end {lemma}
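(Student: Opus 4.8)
\textbf{Proof proposal for Lemma \ref{Lemma2.2}.}
The plan is to use the classical Kolmogorov-type argument: pass to the distribution function of $f$ on $Q$, split the layer-cake integral at a free truncation level, and then optimize that level. Set $d_f(\lambda):=\left|\{x\in Q:|f(x)|>\lambda\}\right|$ for $\lambda>0$. We may assume $0<\|f\|_{L^{u,\infty}(Q)}<\infty$, since the inequality is trivial otherwise (if the weak norm is infinite there is nothing to prove, and if it is zero then $f=0$ a.e.\ on $Q$). By the layer-cake formula,
\[
\|f\|_{L^p(Q)}^p=p\int_0^\infty\lambda^{p-1}d_f(\lambda)\,d\lambda .
\]

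First I would split this integral at an arbitrary level $N>0$ into $\int_0^N$ and $\int_N^\infty$. On the first piece I use the trivial bound $d_f(\lambda)\le|Q|$, contributing at most $N^p|Q|$. On the second piece I use the weak-type estimate $d_f(\lambda)\le\lambda^{-u}\|f\|_{L^{u,\infty}(Q)}^u$; here the hypothesis $p<u$ is exactly what guarantees convergence of $\int_N^\infty\lambda^{p-1-u}\,d\lambda$, and this piece contributes at most $\frac{p}{u-p}N^{p-u}\|f\|_{L^{u,\infty}(Q)}^u$. Altogether,
\[
\|f\|_{L^p(Q)}^p\le N^p|Q|+\frac{p}{u-p}\,N^{p-u}\,\|f\|_{L^{u,\infty}(Q)}^u .
\]

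Then I would choose $N$ to balance the two terms, namely $N:=|Q|^{-1/u}\|f\|_{L^{u,\infty}(Q)}$, which makes each term equal to $|Q|^{1-p/u}\|f\|_{L^{u,\infty}(Q)}^p$ up to the factor $\frac{p}{u-p}$, yielding
\[
\|f\|_{L^p(Q)}^p\le\frac{u}{u-p}\,|Q|^{1-\frac{p}{u}}\,\|f\|_{L^{u,\infty}(Q)}^p .
\]
Dividing by $|Q|$ and taking $p$-th roots gives $|Q|^{-1/p}\|f\|_{L^p(Q)}\le C\,|Q|^{-1/u}\|f\|_{L^{u,\infty}(Q)}$ with the explicit constant $C=C_{p,u}=\left(\tfrac{u}{u-p}\right)^{1/p}$. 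I do not expect a genuine obstacle here; the only points needing a little care are the convergence of the tail integral (which is where $p<u$ enters) and the bookkeeping in the optimal choice of $N$.
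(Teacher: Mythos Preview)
Your argument is correct and is exactly the standard Kolmogorov inequality proof; the computation and the choice of the truncation level $N$ are both right, and the explicit constant $\left(\tfrac{u}{u-p}\right)^{1/p}$ is sharp for this method. Note, however, that the paper does not actually supply its own proof of Lemma~\ref{Lemma2.2}: it is quoted from \cite{LZ2014} without argument, so there is nothing in the paper to compare your approach against---your write-up simply fills in what the authors took for granted.
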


\begin{lemma}\label{Lemma2.3}{\rm(see \cite{M2009})}
\rm Let $0<\alpha<mn$, $\frac{1}{p}=\frac{1}{p_1}+\cdots+\frac{1}{p_m}$ with $1\leq p_j<\infty\left(j=1,\ldots,m\right)$, and $\frac{1}{q}=\frac{1}{p}-\frac{\alpha}{n}>0$.

\begin{enumerate}[(i)] 
	\item If $1<p_j<{\infty}$ for all $j=1,\ldots,m$, then for $\vec\omega\in A_{\vec P,q}$, there exists a constant $C>0$ independent of $\vec f$ such that
\begin{align*}
\left\|\mathcal{M}_{\alpha}(\vec f)\right\|_{L^q\left(v_{\omega}^q\right)}\leq C\prod_{j=1}^m\left\|f_j\right\|_{L^{p_j}\left(\omega_j^{p_j}\right)}.
\end{align*}
	
	\item If $1\leq p_j<{\infty}$ for all $j=1,\ldots,m$ and at least one $p_j=1$ for some $j=1,\ldots,m$, then for $\vec\omega\in A_{\vec P,q}$, there exists a constant $C>0$ independent of $\vec f$ such that
\begin{align*}
\left\|\mathcal{M}_{\alpha}(\vec f)\right\|_{L^{q,\infty}\left(v_{\omega}^q\right)}\leq C\prod_{j=1}^m\left\|f_j\right\|_{L^{p_j}\left(\omega_j^{p_j}\right)}.
\end{align*}
	\end{enumerate}
\end{lemma}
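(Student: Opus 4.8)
My plan is to reduce Lemma~\ref{Lemma2.3} to the already known weighted bounds for the $m$-linear Riesz potential
\[
\mathcal I_{\alpha}(\vec f)(x):=\int_{(\mathbb{R}^n)^m}\frac{\prod_{j=1}^{m}|f_j(y_j)|}{\big(\sum_{j=1}^{m}|x-y_j|\big)^{mn-\alpha}}\,d\vec y,
\]
by means of the elementary pointwise comparison $\mathcal M_{\alpha}(\vec f)(x)\le C_{m,n}\,\mathcal I_{\alpha}(|f_1|,\dots,|f_m|)(x)$. To prove it, fix $x$ and a cube $Q\ni x$ with side length $\ell(Q)$; for $y_1,\dots,y_m\in Q$ one has $\sum_{j=1}^{m}|x-y_j|\le m\sqrt{n}\,\ell(Q)$, hence $\big(\sum_{j}|x-y_j|\big)^{-(mn-\alpha)}\ge c_{m,n}|Q|^{\alpha/n-m}$; keeping only the part of the integral defining $\mathcal I_{\alpha}$ over $Q^m$ gives $\mathcal I_{\alpha}(|\vec f|)(x)\ge c_{m,n}|Q|^{\alpha/n}\prod_{j=1}^{m}\frac1{|Q|}\int_Q|f_j|$, and taking the supremum over all cubes $Q\ni x$ yields the comparison. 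Then (i) follows from the strong-type boundedness $\mathcal I_{\alpha}\colon L^{p_1}(\omega_1^{p_1})\times\cdots\times L^{p_m}(\omega_m^{p_m})\to L^{q}(v_{\omega}^{q})$ for $\vec\omega\in A_{\vec P,q}$ when all $p_j>1$, and (ii) from the corresponding weak-type endpoint into $L^{q,\infty}(v_{\omega}^{q})$ when some $p_j=1$; both are exactly the results of \cite{M2009}.

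Should a self-contained argument be preferred, I would run a Calder\'on--Zygmund decomposition directly on $\mathcal M_{\alpha}$. The standard finitely many shifted dyadic grids reduce matters to a dyadic operator $\mathcal M_{\alpha}^{\mathcal D}$. Fixing a large dimensional constant $a$, letting $\Omega_k:=\{x:\mathcal M_{\alpha}^{\mathcal D}(\vec f)(x)>a^{k}\}$, and taking $\{Q_{k,i}\}_i$ to be the maximal dyadic cubes inside $\Omega_k$, one has that the $Q_{k,i}$ are disjoint for fixed $k$, that $a^{k}<|Q_{k,i}|^{\alpha/n}\prod_{j}\frac1{|Q_{k,i}|}\int_{Q_{k,i}}|f_j|\le a^{k+1}$, and that the family $\{Q_{k,i}\}_{k,i}$ is sparse, i.e. there exist pairwise disjoint $E_{k,i}\subset Q_{k,i}$ with $|E_{k,i}|\ge c|Q_{k,i}|$. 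Summing $\int(\mathcal M_{\alpha}^{\mathcal D}\vec f)^{q}v_{\omega}^{q}$ over the rings $\Omega_k\setminus\Omega_{k+1}$ and using these facts gives
\[
\int_{\mathbb{R}^n}\big(\mathcal M_{\alpha}^{\mathcal D}\vec f\big)^{q}v_{\omega}^{q}\le C\sum_{k,i}\Big(|Q_{k,i}|^{\alpha/n}\prod_{j}\tfrac1{|Q_{k,i}|}\!\int_{Q_{k,i}}\!|f_j|\Big)^{q}v_{\omega}^{q}(Q_{k,i}).
\]
Applying H\"older's inequality with exponents $p_j$ to each inner integral and then the $A_{\vec P,q}$ condition, rewritten as $v_{\omega}^{q}(Q)\prod_{j}\big(\int_Q\omega_j^{-p_j'}\big)^{q/p_j'}\le[\vec\omega]_{A_{\vec P,q}}^{\,q}|Q|^{q(m-\alpha/n)}$ — whose exponent count is forced by $\frac1q=\frac1p-\frac\alpha n$, which gives $1+q\sum_j\frac1{p_j'}=q\big(m-\frac\alpha n\big)$ — makes the powers of $|Q_{k,i}|$ cancel exactly, leaving $\int(\mathcal M_{\alpha}^{\mathcal D}\vec f)^{q}v_{\omega}^{q}\le C\sum_{k,i}\prod_{j}\big(\int_{Q_{k,i}}|f_j|^{p_j}\omega_j^{p_j}\big)^{q/p_j}$.

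The hard part is then to bound $\sum_{k,i}\prod_{j}\big(\int_{Q_{k,i}}|f_j|^{p_j}\omega_j^{p_j}\big)^{q/p_j}$ by $\prod_{j}\|f_j\|_{L^{p_j}(\omega_j^{p_j})}^{q}$: the cubes $Q_{k,i}$ are nested across different levels $k$, so one cannot merely add the integrals, and one must instead exploit the sparseness together with the disjointness of the $E_{k,i}$, distribute the fractional homogeneity $|Q_{k,i}|^{q\alpha/n}$ among the $m$ factors so as to recognize fractional maximal functions of the measures $|f_j|^{p_j}\omega_j^{p_j}$, and use the $A_{\infty}$/reverse-H\"older self-improvement available for weights in the class $A_{\vec P,q}$ to gain the small extra integrability needed. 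When every $p_j>1$ this produces the strong-type estimate of (i); when some $p_j=1$ only the associated weak-type endpoint is available, which is precisely why (ii) is stated with $L^{q,\infty}$. Everything upstream of this summation is soft, so the correct bookkeeping of the fractional homogeneity across the $m$ factors is where I expect the real effort to go.
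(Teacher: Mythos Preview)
The paper does not supply its own proof of Lemma~\ref{Lemma2.3}: the lemma is quoted verbatim from \cite{M2009} and used as a black box, so there is no in-paper argument to compare against. Your proposal is therefore not competing with anything the authors wrote.

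That said, both routes you sketch are sound. The pointwise domination $\mathcal M_{\alpha}(\vec f)\le C_{m,n}\,\mathcal I_{\alpha}(|f_1|,\dots,|f_m|)$ is correct and immediately reduces (i) and (ii) to Moen's weighted bounds for the multilinear Riesz potential; just be aware that in \cite{M2009} the logical order is the reverse---Moen first proves the $\mathcal M_{\alpha}$ estimates directly and then deduces the $\mathcal I_{\alpha}$ estimates from them---so while your reduction is formally valid once one cites the $\mathcal I_{\alpha}$ theorem, it is mildly circular in spirit. Your second, self-contained sketch (dyadic reduction, stopping cubes $Q_{k,i}$, sparseness with disjoint kernels $E_{k,i}$, H\"older plus the $A_{\vec P,q}$ condition to cancel the powers of $|Q_{k,i}|$, then a Carleson-embedding/maximal-function step over the sparse family) is precisely the strategy Moen uses, and your exponent bookkeeping $1+q\sum_j 1/p_j'=q(m-\alpha/n)$ is correct. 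The place you flag as ``the hard part'' is indeed where the work lies; the standard resolution is to pass from $Q_{k,i}$ to $E_{k,i}$ and bound the resulting sum by a (fractional) maximal function in $L^{p_j/?}$, using that $q/p\ge 1$ (strong type when all $p_j>1$) and dropping to weak type when some $p_j=1$.
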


\begin{lemma}\label{Lemma2.4} {\rm(see \cite{M2009})}
\rm Let $0<\alpha<mn$, $\vec\omega=\left(\omega_1,\ldots,\omega_m\right)$, $v_{\vec\omega}=\prod_{j=1}^{m}\omega_j$, $\vec P=\left(p_1,\ldots,p_m\right)$, $\frac{1}{p}=\frac{1}{p_1}+\cdots+\frac{1}{p_m}$ with $1\leq p_j<\infty\left(j=1,\ldots,m\right)$, and $\frac{1}{q}=\frac{1}{p}-\frac{\alpha}{n}>0$. Suppose $\vec\omega\in A_{\vec P,q}$, then
\[
\begin{cases}
\omega_j^{-p_j'}\in A_{mp_j'}\left(j=1,\ldots,m\right),\\
v_{\vec\omega}^q\in A_{mq}.
\end{cases}
\]
	\end {lemma}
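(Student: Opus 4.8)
The plan is to obtain both conclusions directly from the defining inequality of the class $A_{\vec P,q}$ (Definition~\ref{definition2.4}), using nothing beyond the generalized H\"older inequality on cubes; no maximal-function bound or interpolation enters. Throughout write $v=v_{\vec\omega}=\prod_{j=1}^m\omega_j$ and let $\mathcal A$ denote the (finite) value of the supremum occurring in the $A_{\vec P,q}$ condition for $\vec\omega$; since the $\omega_j$ are weights, this hypothesis makes every average appearing below positive and finite, so all the algebra is legitimate, and one checks readily that $mq>1$ and $mp_j'>1$, so the target classes $A_{mq}$ and $A_{mp_j'}$ are meaningful. For definiteness I treat the case $1<p_j<\infty$ for all $j$; when some $p_j=1$ the assertion reads $\omega_j^{1/m}\in A_1$ and is obtained by the same scheme, replacing the $\omega_j$-average by $\inf_Q\omega_j$ and using the pointwise bound $\omega_j\ge\inf_Q\omega_j$ where needed.

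To prove $v^q\in A_{mq}$, I would bound separately the two averages in the $A_{mq}$ quotient $\left(\frac1{|Q|}\int_Q v^q\right)\left(\frac1{|Q|}\int_Q v^{-q/(mq-1)}\right)^{mq-1}$. The first is handled at once by raising the $A_{\vec P,q}$ inequality to the power $q$: $\frac1{|Q|}\int_Q v^q\le\mathcal A^{\,q}\prod_{j=1}^m\left(\frac1{|Q|}\int_Q\omega_j^{-p_j'}\right)^{-q/p_j'}$. For the second, I write $v^{-q/(mq-1)}=\prod_j\omega_j^{-q/(mq-1)}$ and apply H\"older with exponents $r_j:=p_j'(mq-1)/q$, chosen so that the $j$-th factor raised to the power $r_j$ is exactly $\omega_j^{-p_j'}$; the reciprocal exponents then sum to $\frac{q}{mq-1}\sum_j\frac1{p_j'}=\frac{q(m-1/p)}{mq-1}$, which is $\le 1$ precisely because $q\ge p$, that is because $\alpha\ge0$ (if the sum is strictly $<1$ one inserts a harmless factor $1$). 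This gives $\left(\frac1{|Q|}\int_Q v^{-q/(mq-1)}\right)^{mq-1}\le\prod_j\left(\frac1{|Q|}\int_Q\omega_j^{-p_j'}\right)^{q/p_j'}$; multiplying by the first bound, the $\omega_j^{-p_j'}$-averages cancel in pairs and the $A_{mq}$ quotient is bounded by $\mathcal A^{\,q}$.

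To prove $\omega_j^{-p_j'}\in A_{mp_j'}$, fix $j$, set $\sigma_j=\omega_j^{-p_j'}$ and $s_j=\frac{p_j'}{mp_j'-1}=\frac1{m-1/p_j'}$, so that $\sigma_j^{-1/(mp_j'-1)}=\omega_j^{s_j}$ and the whole task is to control $\frac1{|Q|}\int_Q\omega_j^{s_j}$. Since $v=\prod_i\omega_i$, I would factor $\omega_j=v\prod_{i\ne j}\omega_i^{-1}$, whence $\omega_j^{s_j}=v^{s_j}\prod_{i\ne j}\omega_i^{-s_j}$, and then apply H\"older with exponent $q/s_j$ on $v^{s_j}$ and $p_i'/s_j$ on each $\omega_i^{-s_j}$ $(i\ne j)$. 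These are admissible because the reciprocal exponents sum to $s_j\left(\frac1q+\sum_{i\ne j}\frac1{p_i'}\right)=s_j\left(m-\frac1{p_j'}-\frac\alpha n\right)=1-\frac{\alpha/n}{m-1/p_j'}\le 1$, again exploiting $\frac1q=\frac1p-\frac\alpha n$. Hence $\frac1{|Q|}\int_Q\omega_j^{s_j}\le\left(\frac1{|Q|}\int_Q v^q\right)^{s_j/q}\prod_{i\ne j}\left(\frac1{|Q|}\int_Q\omega_i^{-p_i'}\right)^{s_j/p_i'}$. Raising to the power $mp_j'-1$ and using $s_j(mp_j'-1)=p_j'$ converts the right side into $\left(\frac1{|Q|}\int_Q v^q\right)^{p_j'/q}\prod_{i\ne j}\left(\frac1{|Q|}\int_Q\omega_i^{-p_i'}\right)^{p_j'/p_i'}$; multiplying by $\frac1{|Q|}\int_Q\sigma_j$ restores the $j$-th factor and yields exactly the $p_j'$-th power of the quantity in the $A_{\vec P,q}$ condition, so the $A_{mp_j'}$ quotient of $\sigma_j$ is bounded by $\mathcal A^{\,p_j'}$.

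The one genuinely delicate point, and the step I would flag, is the verification that in each H\"older application the reciprocal exponents sum to at most $1$; both times this reduces exactly to the relation $\frac1q=\frac1p-\frac\alpha n$ (equivalently $q\ge p$). In fact the fractional parameter works in our favour: for $\alpha>0$ these sums lie strictly below $1$, so one merely pads with a trivial factor $1$, whereas the borderline case with no room to spare is $\alpha=0$, i.e.\ the Calder\'on--Zygmund setting.
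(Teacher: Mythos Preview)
The paper does not supply its own proof of this lemma; it is quoted as a known result from Moen~\cite{M2009}. Your argument is correct and is essentially the original one: the $A_{\vec P,q}$ condition already contains the averages $\tfrac1{|Q|}\int_Q v^q$ and $\tfrac1{|Q|}\int_Q\omega_j^{-p_j'}$, and the ``missing'' averages $\tfrac1{|Q|}\int_Q v^{-q/(mq-1)}$ and $\tfrac1{|Q|}\int_Q\omega_j^{p_j'/(mp_j'-1)}$ are recovered by a single H\"older step with exponents that are forced by the algebra. Your check that the reciprocal exponents sum to at most $1$ is exactly where the relation $1/q=1/p-\alpha/n$ (equivalently $q\ge p$) is used, and you have computed this correctly in both places; the observation that $\alpha>0$ gives strict inequality, so that one only needs to pad with a constant factor, is also right.
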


\begin{lemma}\label{Lemma2.5}{\rm(see \cite{BHS2011})} 
\rm Let $a\geq 1$. If $b\in {\rm BMO}$, then for all cubes $Q$, we have
\begin{itemize}
\item [\rm (i)]$\left(\frac{1}{|Q|}\int_Q|b\left(y\right)-b_Q|dy\right)^{\frac{1}{a}}\leq C\|b\|_{{\rm BMO}}$.
\item [\rm (ii)]$\left(\frac{1}{|2^kQ|}\int_{2^kQ}|b\left(y\right)-b_Q|dy\right)^{\frac{1}{a}}\leq Ck\|b\|_{{\rm BMO}}$, for $k\in\mathbb{N}^+$.
\end{itemize}
\end{lemma}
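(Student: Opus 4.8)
The plan is to derive both parts from the John--Nirenberg inequality together with the elementary control of how the average $b_Q$ changes under dilation of $Q$. (Here I read the left-hand sides with $|b(y)-b_Q|^a$ under the integral sign, which is the only sensible reading: as written with $a=1$ inside, the power $1/a$ on the outside would be false for $\|b\|_{{\rm BMO}}$ small; the $L^a$-oscillation form is exactly what is needed later in the paper.)

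For (i), recall the John--Nirenberg inequality: there are absolute constants $c_1,c_2>0$ such that for every cube $Q$ and every $\lambda>0$,
\[
\big|\{y\in Q:\ |b(y)-b_Q|>\lambda\}\big|\le c_1\,e^{-c_2\lambda/\|b\|_{{\rm BMO}}}\,|Q|.
\]
Writing the $L^a$-average of the oscillation through its distribution function,
\[
\frac{1}{|Q|}\int_Q|b(y)-b_Q|^a\,dy=\frac{a}{|Q|}\int_0^\infty\lambda^{a-1}\,\big|\{y\in Q:\ |b(y)-b_Q|>\lambda\}\big|\,d\lambda,
\]
and inserting the exponential bound, the integral converges (substitute $t=c_2\lambda/\|b\|_{{\rm BMO}}$) and is dominated by $C_{a,n}\,\|b\|_{{\rm BMO}}^a$, the constant being a finite $\Gamma$-type factor times $c_1c_2^{-a}$. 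Taking $a$-th roots gives (i); note the constant depends only on $a$ and $n$.

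For (ii), the extra ingredient is the comparison of $b_Q$ with $b_{2^kQ}$. Since $2^jQ\subset 2^{j+1}Q$ and $|2^{j+1}Q|=2^n|2^jQ|$, we have
\[
|b_{2^{j+1}Q}-b_{2^jQ}|\le\frac{1}{|2^jQ|}\int_{2^jQ}|b(y)-b_{2^{j+1}Q}|\,dy\le 2^n\cdot\frac{1}{|2^{j+1}Q|}\int_{2^{j+1}Q}|b(y)-b_{2^{j+1}Q}|\,dy\le 2^n\|b\|_{{\rm BMO}},
\]
so telescoping yields $|b_{2^kQ}-b_Q|\le 2^n k\,\|b\|_{{\rm BMO}}$. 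Applying part (i) to the cube $2^kQ$ and Minkowski's inequality in $L^a\!\big(2^kQ,\tfrac{dy}{|2^kQ|}\big)$,
\[
\left(\frac{1}{|2^kQ|}\int_{2^kQ}|b(y)-b_Q|^a\,dy\right)^{1/a}\le C\|b\|_{{\rm BMO}}+|b_{2^kQ}-b_Q|\le C(1+2^nk)\|b\|_{{\rm BMO}}\le C'k\|b\|_{{\rm BMO}}
\]
for $k\in\mathbb{N}^+$, which is (ii).

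The only genuinely nontrivial input is the John--Nirenberg inequality; granting that classical fact, the rest is the one-line dilation/telescoping estimate plus Minkowski, so I expect no real obstacle. The single point requiring mild care is tracking that the constants in (i) and (ii) depend only on $n$ and $a$ and are independent of $Q$, $k$, and $b$, which is transparent from the argument.
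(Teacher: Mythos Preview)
The paper does not prove Lemma~2.5 at all; it simply quotes the result from \cite{BHS2011}. Your argument via John--Nirenberg for part~(i) and the telescoping estimate $|b_{2^kQ}-b_Q|\le 2^nk\|b\|_{{\rm BMO}}$ combined with Minkowski for part~(ii) is the standard proof and is correct.

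Your reading of the statement is also right: as printed, the integrand is $|b(y)-b_Q|$ and only the outer power is $1/a$, which, as you note, is false for small $\|b\|_{{\rm BMO}}$. The intended form is the $L^a$-oscillation $\big(\frac{1}{|Q|}\int_Q|b(y)-b_Q|^a\,dy\big)^{1/a}$. This is confirmed by how the lemma is invoked in the proof of Theorem~2.2, where it is applied to averages of the shape $\big(\frac{1}{|Q^\ast|}\int_{Q^\ast}|b_j(z)-\lambda|^{\delta h_1}\,dz\big)^{1/(\delta h_1)}$ and $\big(\frac{1}{|2^kQ^\ast|}\int_{2^kQ^\ast}|b_j(y_j)-\lambda|^{p_0'h_3'}\,dy_j\big)^{1/(p_0'h_3')}$ with exponents $\geq 1$.
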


\subsection{Sharp Maximal Pointwise Estimates}
\quad\quad In this subsection, we initially obtain the sharp maximal estimates for multilinear fractional integral operators with generalized kernels and multilinear commutators generated by ${\rm BMO}$ functions and these operators.
\begin{theorem}\label{Theorem 2.1}
\rm Let $m\geq2$, $0<\alpha p_0'<mn$, $T_{\alpha}$ be an $m$-linear fractional integral operator with generalized kernel as  defined in Definition \ref{definition1.2} and $\sum_{k=1}^{\infty}C_k<\infty$. Assuming $0<\delta<\min\left\{1,\frac{sn}{n-s\alpha}\right\}$, there exists a constant $C>0$ such that for all $m$-tuples of bounded measurable functions $\vec f=\left(f_1,\ldots,f_m\right)$ with compact support, the following holds
\begin{align*}
	M_{\delta}^{\#}(T_{\alpha}(\vec f))\left(x\right)\leq C\mathcal{M}_{\alpha,p_0'}(\vec f)\left(x\right).
	\end{align*}
	\end{theorem}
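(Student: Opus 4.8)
The plan is to fix a point $x\in\mathbb{R}^n$ and an arbitrary cube $Q\ni x$, and to produce a constant $c=c_Q$ for which
$$\left(\frac{1}{|Q|}\int_Q\big|\,|T_\alpha(\vec f)(z)|^\delta-|c|^\delta\,\big|\,dz\right)^{1/\delta}\le C\,\mathcal{M}_{\alpha,p_0'}(\vec f)(x),$$
since $M^\#_\delta(T_\alpha(\vec f))(x)$ is comparable to the supremum over $Q\ni x$ of such expressions (using $\big||a|^\delta-|b|^\delta\big|\le|a-b|^\delta$ for $0<\delta<1$). Write $Q^*=3\sqrt{n}\,Q$ and split each $f_j=f_j^0+f_j^\infty$ with $f_j^0=f_j\chi_{Q^*}$. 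Expanding the product $\prod_{j=1}^m(f_j^0+f_j^\infty)$ gives $2^m$ terms; separate the all-zero term $T_\alpha(f_1^0,\dots,f_m^0)$, the all-infinity term $T_\alpha(f_1^\infty,\dots,f_m^\infty)$, and the mixed terms where the index set $\emptyset\ne S\subsetneq\{1,\dots,m\}$ records which factors carry the local piece. I would take $c$ to be the value at $x$ (or, more safely, the average over $Q$) of $T_\alpha(f_1^\infty,\dots,f_m^\infty)$.

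For the all-local term, use hypothesis (2) of Definition \ref{definition1.2}: choosing $s_j=p_0'$ for all $j$ so that $\frac1s=\frac{m}{p_0'}$ and $T_\alpha:L^{p_0'}\times\cdots\times L^{p_0'}\to L^{sn/(n-s\alpha),\infty}$, together with the Kolmogorov-type inequality Lemma \ref{Lemma2.2} (valid because $\delta<\frac{sn}{n-s\alpha}$), bound
$$\left(\frac{1}{|Q|}\int_Q|T_\alpha(\vec f^{\,0})|^\delta\right)^{1/\delta}\le C|Q|^{-1/\delta}\|T_\alpha(\vec f^{\,0})\|_{L^\delta(Q)}\le C|Q|^{-(n-s\alpha)/(sn)}\prod_{j=1}^m\|f_j^0\|_{L^{p_0'}}\le C\,\mathcal{M}_{\alpha,p_0'}(\vec f)(x),$$
the last step because $\frac{1}{|Q^*|^{1/p_0'}}\|f_j\|_{L^{p_0'}(Q^*)}\le |Q^*|^{-1/p_0'}\cdot|Q^*|^{1/p_0'}\big(\tfrac{1}{|Q^*|}\int_{Q^*}|f_j|^{p_0'}\big)^{1/p_0'}$ and the exponents on $|Q|$ match $m/p_0'-(n-s\alpha)/(sn)=m/p_0'-(1/s-\alpha/n)=-\alpha/n$ up to the factor $|Q|^{\alpha/n}$ that supplies the fractional gain. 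For each mixed term with $\emptyset\ne S\subsetneq\{1,\dots,m\}$: for $z\in Q$ the variables $y_j$ with $j\notin S$ range over $(Q^*)^c$, so $\sum_j|z-y_j|\gtrsim \ell(Q)$, and the size estimate \eqref{1.1} lets one dominate $|T_\alpha(\dots)(z)|$ by a sum over annuli $2^kQ^*$, on each of which $\big(\sum_j|z-y_j|\big)^{-(mn-\alpha)}\approx(2^k\ell(Q))^{-(mn-\alpha)}$; pulling the local factors out in $L^{p_0'}$ and the nonlocal ones in $L^1$ over $2^kQ^*$, each such $z$-value is $\lesssim\sum_{k\ge1}2^{-k(mn-\alpha)}\cdot(\text{products of averages})\lesssim\mathcal{M}_{\alpha,p_0'}(\vec f)(x)$ because the geometric series in $k$ converges. (Here one needs only the size condition, not smoothness.)

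The main obstacle — and the only place the generalized smoothness condition \eqref{1.5} enters — is the all-infinity term: one must show
$$\frac{1}{|Q|}\int_Q\big|T_\alpha(\vec f^{\,\infty})(z)-T_\alpha(\vec f^{\,\infty})(x')\big|\,dz\le C\,\mathcal{M}_{\alpha,p_0'}(\vec f)(x)$$
for an appropriate reference point $x'\in Q$ (or after averaging in $x'$ over $Q$). Write the difference as $\int (K_\alpha(z,\vec y)-K_\alpha(x',\vec y))\prod f_j^\infty(y_j)\,d\vec y$, decompose $(\mathbb{R}^n)^m\setminus(Q^*)^m$ into the dyadic coronas $Q(x',2^{k+2}\sqrt{mn}|z-x'|)^m\setminus Q(x',2^{k+1}\sqrt{mn}|z-x'|)^m$ for $k\ge1$, and on the $k$-th corona apply Hölder with exponent $p_0'$ in $\vec y$: the kernel-difference factor is controlled by \eqref{1.5} as $CC_k\,2^{k(\alpha-mn/p_0')}|z-x'|^{\alpha-mn/p_0'}$, while the remaining $\prod\|f_j^\infty\|_{L^{p_0'}(2^{k+2}Q^*)}$ is estimated by $\prod|2^kQ^*|^{1/p_0'}\big(\text{average}\big)$. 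Collecting powers of $2^k\ell(Q)$, the $k$-th term is $\lesssim C_k\,\mathcal{M}_{\alpha,p_0'}(\vec f)(x)$, and summing over $k$ uses exactly the hypothesis $\sum_{k\ge1}C_k<\infty$; this is why that assumption appears in the statement. Taking the supremum over all $Q\ni x$ completes the proof. The delicate bookkeeping is matching the homogeneity exponents $\alpha-mn/p_0'$ coming from \eqref{1.5} against the $|2^kQ^*|^{1/p_0'}$ volume factors so that what survives is precisely $|Q^*|^{\alpha/n}\prod\big(\tfrac{1}{|Q^*|}\int_{2^kQ^*}|f_j|^{p_0'}\big)^{1/p_0'}\le\mathcal{M}_{\alpha,p_0'}(\vec f)(x)$, with a clean geometric/summable-$C_k$ tail in $k$.
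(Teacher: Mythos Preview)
Your overall architecture differs from the paper's in one important respect. The paper does \emph{not} single out the all-$\infty$ term: it subtracts a separate constant $c_{\rho_1,\dots,\rho_m}=T_\alpha(f_1^{\rho_1},\dots,f_m^{\rho_m})(z_0)$ for \emph{every} tuple with at least one $\rho_j=\infty$, takes the reference point $z_0\in 4Q\setminus 3Q$ (so that $|z-z_0|\sim\ell(Q)$), and then estimates each such difference by the same corona-plus-smoothness argument using \eqref{1.5}. Since the support of $\prod_j f_j^{\rho_j}$ always lies in $(\mathbb{R}^n)^m\setminus(Q^\ast)^m$ once some $\rho_j=\infty$, one computation handles all $2^m-1$ terms at once. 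Your route---size only for the mixed terms, smoothness only for the all-$\infty$ term---is a legitimate alternative and is common for the $\alpha=0$ theory, but it requires more bookkeeping here.

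The gap is in your mixed-term estimate. The factor $2^{-k(mn-\alpha)}$ you display is only the kernel bound on the $k$-th annulus; you still owe the volumes of the integration domains. With $|S|$ local factors supported in $Q^\ast$ and $m-|S|$ nonlocal factors integrated over $2^{k+1}Q^\ast$, the volume contribution is $\sim\ell(Q)^{n|S|}(2^k\ell(Q))^{n(m-|S|)}$, and the net $k$-power is $2^{k(\alpha-n|S|)}$, not $2^{-k(mn-\alpha)}$. Since only $\alpha p_0'<mn$ is assumed, one may have $\alpha>n$, and then the case $|S|=1$ diverges; your ``geometric series in $k$ converges'' claim fails as written. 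The fix is to push \emph{all} factors into $L^{p_0'}$ over the same cube $2^{k+1}Q^\ast$ and use $|2^{k+1}Q^\ast|^{\alpha/n}\prod_j\bigl(\tfrac{1}{|2^{k+1}Q^\ast|}\int_{2^{k+1}Q^\ast}|f_j|^{p_0'}\bigr)^{1/p_0'}\le\mathcal{M}_{\alpha,p_0'}(\vec f)(x)$; what then survives is $2^{-kn|S|/p_0}$, which is summable. So your strategy can be made to work, but the argument you wrote does not.

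Two smaller points: your $Q^\ast=3\sqrt{n}\,Q$ is too small to guarantee $\varOmega_2\subset Q^\ast$ (the paper takes $Q^\ast=14n\sqrt{mn}\,Q$ and $z_0\notin 3Q$ precisely for this containment and so that $x\in\varOmega_{k+2}$); and in Definition~\ref{definition1.2}(2) the exponents $s_j\le p_0'$ are \emph{fixed}, not chosen, so in the all-local term you must use the given $s_j$ and then pass to $p_0'$-averages by H\"older, as the paper does.
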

\begin{proof}
Since $||a|^e-|b|^e|\leq|a-b|^e$ for $0<e<1$, for any $Q\ni x$, we can estimate 
\begin{align*}
\left(\frac{1}{|Q|}\int_Q\left|\left|T_{\alpha}(\vec f)\left(z\right)\right|^{\delta}-|c|^{\delta}\right|dz\right)^{\frac{1}{\delta}}\leq C\left(\frac{1}{|Q|}\int_Q\left|T_{\alpha}(\vec f)\left(z\right)-c\right|^{\delta}dz\right)^{\frac{1}{\delta}}.
	\end{align*}

  Let $Q^{\ast}:=14n\sqrt{mn}Q$, we decomepose $f_j:=f_j^0+f_j^{\infty}$ with $f_j^0:=f_j\chi_{Q^{\ast}}$, then
\begin{align*}
\prod_{j=1}^mf_j\left(y_j\right)&=\prod_{j=1}^m\left(f_j^0\left(y_j\right)+f_j^{\infty}\left(y_j\right)\right)\\
&=\sum_{\left(\rho_1\ldots,\rho_m\right)\in\left\{0,\infty\right\}}f_1^{\rho_1}\left(y_1\right)\times\cdots\times f_m^{\rho_m}\left(y_m\right)\\
&=\prod_{j=1}^mf_j^0\left(y_j\right)+\sum_{\left(\rho_1,\ldots,\rho_m\right)\in\rho}f_1^{\rho_1}\left(y_1\right)\times\cdots\times f_m^{\rho_m}\left(y_m\right),
\end{align*}
where $\rho=\{\left(\rho_1,\ldots,\rho_m\right)$: there is at least one $\rho_j={\infty}\}$. It is easy to see that
\begin{align*}
	T_{\alpha}(\vec f)\left(z\right)=T_{\alpha}\left(f_1^0,\ldots,f_m^0\right)\left(z\right)+\sum_{\left(\rho_1,\ldots,\rho_m\right)\in\rho}T_{\alpha}\left(f_1^{\rho_1},\ldots,f_m^{\rho_m}\right)\left(z\right).
	\end{align*}

  Let $c=\sum_{(\rho_1,\ldots,\rho_m)\in \rho}c_{\rho_1,\ldots,\rho_m}$, then we can get
\begin{align*}
&\left(\frac{1}{|Q|}\int_Q\left|T_{\alpha}(\vec f)\left(z\right)-c\right|^{\delta}dz\right)^{\frac{1}{\delta}}\\
\leq &C\left(\frac{1}{|Q|}\int_Q\left|T_{\alpha}\left(f_1^0,\ldots,f_m^0\right)\left(z\right)\right|^{\delta}dz\right)^{\frac{1}{\delta}}\\
&+C\sum_{\left(\rho_1,\ldots,\rho_m\right)\in\rho}\left(\frac{1}{|Q|}\int_Q\left|T_{\alpha}\left(f_1^{\rho_1},\ldots,f_m^{\rho_m}\right)\left(z\right)-c_{\rho_1,\ldots,\rho_m}\right|^{\delta}dz\right)^{\frac{1}{\delta}}\\
:=&\uppercase\expandafter{\romannumeral+1}+\sum_{(\rho_1,\ldots,\rho_m)\in \rho}\uppercase\expandafter{\romannumeral+2}_{\rho_1,\ldots,\rho_m}.
\end{align*}

  From Definition \ref{definition1.2} (2), H\"{o}lder's inequality and Lemma \ref{Lemma2.2}, it follows that
\begin{align*}
\uppercase\expandafter{\romannumeral+1}&\leq C|Q|^{-\frac{n-s\alpha}{sn}}\|T_{\alpha}\left(f_1^0,\ldots,f_m^0\right)\|_{{L^{\frac{sn}{n-s\alpha},\infty}}\left(Q\right)}\\
&\leq C|Q|^{\frac{\alpha}{n}}|Q|^{-\frac{1}{s}}\left(\int_{Q^{\ast}}|f_1\left(y_1\right)|^{s_1}dy_1\right)^{\frac{1}{s_1}}\times\cdots\times\left(\int_{Q^{\ast}}|f_m\left(y_m\right)|^{s_m}dy_m\right)^{\frac{1}{s_m}}\\
&\leq C|Q^{\ast}|^{\frac{\alpha}{n}}\left(\frac{1}{|Q^{\ast}|}\int_{Q^{\ast}}|f_1\left(y_1\right)|^{s_1}dy_1\right)^{\frac{1}{s_1}}\times\cdots\times\left(\frac{1}{|Q^{\ast}|}\int_{Q^{\ast}}|f_m\left(y_m\right)|^{s_m}dy_m\right)^{\frac{1}{s_m}}\\
&\leq C|Q^{\ast}|^{\frac{\alpha}{n}}\prod_{j=1}^m\left(\frac{1}{|Q^{\ast}|}\int_{Q^{\ast}}|f_j\left(y_j\right)|^{p_0'}dy_m\right)^{\frac{1}{p_0'}}\\
&\leq C\mathcal{M}_{\alpha,p_0'}(\vec f)\left(x\right).
\end{align*}

  To estimate $\uppercase\expandafter{\romannumeral+2}_{\rho_1,\ldots,\rho_m}$, we choose $c_{\rho_1,\ldots,\rho_m}:=T_{\alpha}\left(f_1^{\rho_1},\ldots,f_m^{\rho_m}\right)\left(z_0\right)$ for a fixed $z_0\in 4Q \setminus 3Q$. Let $\varOmega_k:=Q\left(z_0,2^k\sqrt{mn}|z-z_0|\right)$, $k\in\mathbb{N}^+$. Note that $\varOmega_{2}\subset Q^{\ast}$, so $({\mathbb{R}}^n)^m \setminus (Q^{\ast})^m \subset ({\mathbb{R}}^n)^m \setminus (\varOmega_2)^m$. When $z\in Q$, we obtain $l\left(Q\right)\leq|z-z_0|\leq\frac{5}{2}\sqrt{n}l\left(Q\right)$.  By H\"{o}lder's inequality, we conclude that
\begin{align*}
\uppercase\expandafter{\romannumeral+2}_{\rho_1,\ldots,\rho_m}&\leq \frac{C}{|Q|}\int_Q\left|T_{\alpha}\left(f_1^{\rho_1},\ldots,f_m^{\rho_m}\right)\left(z\right)-T_{\alpha}\left(f_1^{\rho_1},\ldots,f_m^{\rho_m}\right)\left(z_0\right)\right|dz\\ 
&\leq \frac{C}{|Q|}\int_Q\int_{\left({\mathbb{R}}^n\right)^m\setminus \left(Q^{\ast}\right)^m}|K_{\alpha}\left(z,\vec y\right)-K_{\alpha}\left(z_0,\vec y\right)||\prod_{j=1}^m|f_j\left(y_j\right)|d{\vec y}dz\\
&\leq \frac{C}{|Q|}\int_Q\int_{\left({\mathbb{R}}^n\right)^m\setminus \left(\varOmega_2\right)^m}|K_{\alpha}\left(z,\vec y\right)-K_{\alpha}\left(z_0,\vec y\right)||\prod_{j=1}^m|f_j\left(y_j\right)|d{\vec y}dz\\
&\leq \frac{C}{|Q|}\int_Q\sum_{k=1}^{\infty}\int_{(\varOmega_{k+2})^m\setminus (\varOmega_{k+1})^m}|K_{\alpha}\left(z,\vec y\right)-K_{\alpha}\left(z_0,\vec y\right)||\prod_{j=1}^m|f_j\left(y_j\right)|d{\vec y}dz\\
&\leq \frac{C}{|Q|}\int_Q\sum_{k=1}^{\infty}\left(\int_{(\varOmega_{k+2})^m\setminus (\varOmega_{k+1})^m}|K_{\alpha}\left(z,\vec y\right)-K_{\alpha}\left(z_0,\vec y\right)|^{p_0}d{\vec y}\right)^{\frac{1}{p_0}}\\
&\ \ \ \ \times\left(\int_{\left(\varOmega_{k+2}\right)^m}\prod_{j=1}^m|f_j\left(y_j\right)|^{p_0'}d{\vec y}\right)^{\frac{1}{p_0'}}dz\\
&\leq \frac{C}{|Q|}\int_Q\sum_{k=1}^{\infty}C_k|z-z_0|^{\alpha-\frac{mn}{p_0'}}2^{k(\alpha-\frac{mn}{p_0'})}|\varOmega_{k+2}|^{\frac{m}{p_0'}-\frac{\alpha}{n}}\\
&\ \ \ \ \times|\varOmega_{k+2}|^{\frac{\alpha}{n}}\prod_{j=1}^{m}\left(\frac{1}{|\varOmega_{k+2}|}\int_{\varOmega_{k+2}}|f_j\left(y_j\right)|^{p_0'}dy_j\right)^{\frac{1}{p_0'}}dz\\
&\leq C\mathcal{M}_{\alpha,p_0'}(\vec f)\left(x\right).
\end{align*}
Combining the above estimates we get the desired result.
\end{proof}

\begin{theorem}\label{Theorem2.2} 
\rm Let $m\geq2$, $0<\alpha p_0'<mn$, $T_{\alpha}$ be an $m$-linear fractional integral operator with generalized kernel as  defined in Definition \ref{definition1.2} and $\sum_{k=1}^{\infty}kC_k<\infty$. Assuming $0<\delta<\min\left\{1,\frac{sn}{n-s\alpha}\right\}$, $p_0'<t<\infty$, $\delta<\varepsilon<\infty$ and $\vec b\in {\rm BMO}^m$, there exists a constant $C>0$ such that for all bounded measurable functions with compact support $m$-tuples $\vec f=\left(f_1,\ldots,f_m\right)$, we have
\begin{align*}
M_{\delta}^{\#}(T_{\vec b,\alpha}(\vec f))\left(x\right)\leq C\big\|\vec b\big\|_{\left({\rm BMO}\right)^m}\left(\mathcal{M}_{\alpha,t}(\vec f)\left(x\right)+M_{\varepsilon}(T_{\alpha}(\vec f))\left(x\right)\right),
\end{align*}
where $\big\|\vec b\big\|_{\left({\rm BMO}\right)^m}:=\max_{1\leq i\leq m}\|b_i\|_{{\rm BMO}}$.
\end {theorem}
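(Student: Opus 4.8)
The plan is to follow the template of Theorem \ref{Theorem 2.1} but now track the contribution of the $\mathrm{BMO}$ functions $b_i$. Fix a cube $Q\ni x$ and, for each fixed $i$, write $b_i - (b_i)_{Q^{\ast}}$ in place of $b_i$ using the standard identity
\begin{align*}
T_{\vec b,\alpha}^i(\vec f)(z) = \big(b_i(z)-(b_i)_{Q^{\ast}}\big)T_\alpha(\vec f)(z) - T_\alpha\big(f_1,\ldots,(b_i-(b_i)_{Q^{\ast}})f_i,\ldots,f_m\big)(z),
\end{align*}
so that $T_{\vec b,\alpha}=\sum_{i=1}^m T_{\vec b,\alpha}^i$ and it suffices to bound $M_\delta^{\#}(T_{\vec b,\alpha}^i(\vec f))(x)$ for each $i$. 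As before, using $\big||a|^\delta-|c|^\delta\big|\le|a-c|^\delta$ with $0<\delta<1$, I reduce to estimating $\big(\frac{1}{|Q|}\int_Q|T_{\vec b,\alpha}^i(\vec f)(z)-c|^\delta dz\big)^{1/\delta}$ for a suitable constant $c$, which I will take to be $c = c_1 + c_2$ where $c_1$ handles the first term and $c_2 = T_\alpha(f_1^\infty,\ldots,f_m^\infty)(z_0)$-type quantities handle the second (with $z_0\in 4Q\setminus 3Q$ as in Theorem \ref{Theorem 2.1}).

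The first term, $(b_i(z)-(b_i)_{Q^{\ast}})T_\alpha(\vec f)(z)$, is handled by H\"older's inequality with exponents chosen so that $1/\delta = 1/\varepsilon + (\text{something})$: choosing $q$ with $\delta<\varepsilon$ and splitting $\|\cdot\|_{L^\delta(Q,dz/|Q|)}\le \|b_i-(b_i)_{Q^{\ast}}\|_{L^{q}(Q,dz/|Q|)}\,\|T_\alpha(\vec f)\|_{L^\varepsilon(Q,dz/|Q|)}$, the first factor is $\lesssim \|b_i\|_{\mathrm{BMO}}$ by Lemma \ref{Lemma2.5}(i) and the second factor is $\lesssim M_\varepsilon(T_\alpha(\vec f))(x)$, giving the $M_\varepsilon(T_\alpha(\vec f))(x)$ piece of the claimed bound. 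For the second term I split each $f_j = f_j^0 + f_j^\infty$ with $f_j^0 = f_j\chi_{Q^{\ast}}$ exactly as in the proof of Theorem \ref{Theorem 2.1}, producing the all-zero term $T_\alpha((b_i-(b_i)_{Q^{\ast}})f_i^0,\{f_j^0\}_{j\ne i})$ and the terms involving at least one $f_j^\infty$. The all-zero term is controlled using Definition \ref{definition1.2}(2) (weak $(s_1,\ldots,s_m)$-boundedness), Lemma \ref{Lemma2.2}, and H\"older's inequality: one estimates $\|(b_i-(b_i)_{Q^{\ast}})f_i\|_{L^{s_i}(Q^{\ast})}$ by H\"older with exponents $t/s_i$ and its conjugate, the $\mathrm{BMO}$ factor coming out as $\|b_i\|_{\mathrm{BMO}}$ via Lemma \ref{Lemma2.5}(i) and the remaining factor producing $\mathcal{M}_{\alpha,t}(\vec f)(x)$ — this is exactly why the hypothesis $p_0'<t<\infty$ appears.

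The terms with at least one $f_j^\infty$ are the heart of the matter. For these I subtract the constant $c_{\rho_1,\ldots,\rho_m} = T_\alpha(\ldots)(z_0)$ and use the smoothness condition \eqref{1.5} dyadically over the annuli $\varOmega_{k+2}^m\setminus\varOmega_{k+1}^m$, as in Theorem \ref{Theorem 2.1}. The difference now is that when I apply H\"older's inequality on the annulus the factor $\prod_{j}|f_j(y_j)|^{p_0'}$ in the $i$-th slot is replaced by $|b_i(y_i)-(b_i)_{Q^{\ast}}|^{p_0'}|f_i(y_i)|^{p_0'}$; applying H\"older once more on $\varOmega_{k+2}$ with exponents $(t/p_0')'$ and $t/p_0'$ peels off $\big(\frac{1}{|\varOmega_{k+2}|}\int_{\varOmega_{k+2}}|b_i-(b_i)_{Q^{\ast}}|^{(t/p_0')'p_0'}\big)^{1/\cdot}\lesssim k\|b_i\|_{\mathrm{BMO}}$ by Lemma \ref{Lemma2.5}(ii), leaving $\mathcal{M}_{\alpha,t}(\vec f)(x)$ times the geometric-type sum $\sum_{k\ge 1} k\,C_k\, 2^{k(\alpha-mn/p_0')}|\varOmega_{k+2}|^{\cdots}$; since $\alpha p_0'<mn$ the powers of $2^k$ coming from the kernel bound and the volume factors combine to give a decaying geometric factor, and the extra factor $k$ from the $\mathrm{BMO}$ estimate is absorbed precisely by the hypothesis $\sum_{k=1}^\infty k C_k<\infty$. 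The main obstacle is thus bookkeeping: keeping the exponents $\delta,\varepsilon,t,p_0'$ consistent across the three different H\"older applications and verifying that in the dyadic sum every power of $2^k$ (from \eqref{1.5}, from the volumes $|\varOmega_{k+2}|$, and from the normalization $|z-z_0|\sim l(Q)$) cancels except for an overall decay that survives multiplication by $k$ — this is where the strengthening from $\sum C_k<\infty$ to $\sum k C_k<\infty$ is essential, mirroring the passage from $\mathrm{Dini}$ to $\log$-$\mathrm{Dini}$ in the classical theory.
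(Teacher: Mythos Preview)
Your proposal is correct and follows essentially the same approach as the paper's proof: the same identity for $T_{\vec b,\alpha}^i$, the same three-piece split (called III, IV, V there), and the same H\"older exponents $t/s_i$ for the all-local piece and $t/p_0'$ for the dyadic annuli. One minor correction: in the off-diagonal terms the powers of $2^k$ from \eqref{1.5} and from $|\varOmega_{k+2}|^{m/p_0'-\alpha/n}$ cancel \emph{exactly} (not merely decay), so the remaining sum is literally a constant times $\sum_{k\ge 1} k\,C_k$ --- this is why that hypothesis is precisely what is needed, not more.
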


\begin{proof}
Let $Q^{\ast}:=14n\sqrt{mn}Q$, we decomepose $f_j:=f_j^0+f_j^{\infty}$ with $f_j^0:=f_j\chi_{Q^{\ast}}$, then
\begin{align*}
\prod_{j=1}^mf_j\left(y_j\right)&=\prod_{j=1}^m\left(f_j^0\left(y_j\right)+f_j^{\infty}\left(y_j\right)\right)\\
&=\sum_{\left(\rho_1,\ldots,\rho_m\right)\in\left\{0,\infty\right\}}f_1^{\rho_1}\left(y_1\right)\times\cdots\times f_m^{\rho_m}\left(y_m\right)\\
&=\prod_{j=1}^mf_j^0\left(y_j\right)+\sum_{\left(\rho_1,\ldots,\rho_m\right)\in\rho}f_1^{\rho_1}\left(y_1\right)\times\cdots\times f_m^{\rho_m}\left(y_m\right),
\end{align*}
where $\rho=\{\left(\rho_1,\ldots,\rho_m\right)$: there is at least one $\rho_j={\infty}\}$. It is easy to see that
\begin{align*}
&T_{\alpha}\left(f_1,\ldots,\left(b_j-\lambda\right)f_j,\ldots,f_m\right)\left(z\right)\\
=&T_{\alpha}\left(f_1^0,\ldots,\left(b_j-\lambda\right)f_j^0,\ldots,f_m^0\right)\left(z\right)\\
\ \ \ \ &+\sum_{\left(\rho_1,\ldots,\rho_m\right)\in\rho}T_{\alpha}\left(f_1^{\rho_1},\ldots,\left(b_j-\lambda\right)f_j^{\rho_j},\ldots,f_m^{\rho_m}\right)\left(z\right).
	\end{align*}

  Fix $x\in{\mathbb{R}}^n$ and let $Q$ be a cube containing $x$. Denote $\lambda={b_j}_{Q^{\ast}}$. Let $c_j=-\sum_{(\rho_1,\ldots,\rho_m)\in \rho}c_{j,\rho_1,\ldots,\rho_m}$, then
\begin{align*}
&\left(\frac{1}{|Q|}\int_Q|T_{\vec b,\alpha}^j(\vec f)(z)-c_j|^{\delta}dz\right)^{\frac{1}{\delta}}\\
\leq &C\left(\frac{1}{|Q|}\int_Q|(b_j(z)-\lambda)T_{\alpha}(\vec f)(z)|^{\delta}dz\right)^{\frac{1}{\delta}}+C\left(\frac{1}{|Q|}\int_Q|T_{\alpha}(f_1,\ldots,(b_j-\lambda)f_j,\ldots,f_m)(z)+c_j|^{\delta}dz\right)^{\frac{1}{\delta}}\\
\leq &C\left(\frac{1}{|Q|}\int_Q|(b_j(z)-\lambda)T_{\alpha}(\vec f)(z)|^{\delta}dz\right)^{\frac{1}{\delta}}+C\left(\frac{1}{|Q|}\int_Q|T_{\alpha}(f_1^0,\ldots,(b_j-\lambda)f_j^0,\ldots,f_m^0)(z)|^{\delta}dz\right)^{\frac{1}{\delta}}\\
\ \ \ \ &+C\sum_{(\rho_1,\ldots,\rho_m)\in\rho}\left(\frac{1}{|Q|}\int_Q|T_{\alpha}(f_1^{\rho_1},\ldots,(b_j-\lambda)f_j^{\rho_j},\ldots,f_m^{\rho_m})(z)-c_{j,\rho_1,\ldots,\rho_m}|^{\delta}dz\right)^{\frac{1}{\delta}}\\
:=&\uppercase\expandafter{\romannumeral+3}+\uppercase\expandafter{\romannumeral+4}+\sum_{(\rho_1,\ldots,\rho_m)\in \rho}\uppercase\expandafter{\romannumeral+5}_{\rho_1,\ldots,\rho_m}
	\end{align*}

  Choosing $1<h_1'<\min\{\frac{1}{1-\delta},\frac{\varepsilon}{\delta}\}$ with $\frac{1}{h_1'}+\frac{1}{h_1}=1$, by H\"{o}lder's inequality, we conclude that
\begin{align*}
\uppercase\expandafter{\romannumeral+3}&\leq C\left(\frac{1}{|Q|}\int_Q|b_j(z)-\lambda|^{\delta h_1}dz\right)^{\frac{1}{\delta h_1}}\left(\frac{1}{|Q|}\int_Q|T_{\alpha}(\vec f)|^{\delta h_1'}dz\right)^{\frac{1}{\delta h_1'}}\\
&\leq C\left(\frac{1}{|Q^{\ast}|}\int_{Q^{\ast}}|b_j(z)-\lambda|^{\delta h_1}dz\right)^{\frac{1}{\delta h_1}}\left(\frac{1}{|Q|}\int_Q|T_{\alpha}(\vec f)|^{\delta h_1'}dz\right)^{\frac{1}{\delta h_1'}}\\
&\leq C\|b_j\|_{{\rm BMO}}M_{\varepsilon}(T_{\alpha}(\vec f))\left(x\right).
	\end{align*}

Let $h_2=\frac{t}{s_j}$ with $\frac{1}{h_2}+\frac{1}{h_2'}=1$. From Definition \ref{definition1.2} (2), H\"{o}lder's inequality, Lemma \ref{Lemma2.2} and $s_i\leq p_0'<t \left(i=1,\ldots,m\right)$, it follows that
 
\begin{align*}
\uppercase\expandafter{\romannumeral+4}&\leq C|Q|^{-\frac{n-s\alpha}{sn}}\left\|T_{\alpha}\left(f_1^0,\ldots,\left(b_j-\lambda\right)f_j^0,\ldots,f_m^0\right)\right\|_{L^{\frac{sn}{n-s\alpha},\infty}\left(Q\right)}\\
&\leq C|Q^{\ast}|^{\frac{\alpha}{n}}|Q^{\ast}|^{-\frac{1}{s}}\prod_{i=1,i\neq j}^m\|f_i\|_{L^{s_i}\left(Q^{\ast}\right)}\|\left(b_j-\lambda\right)f_j\|_{L^{s_j}\left(Q^{\ast}\right)}\\
&= C|Q^{\ast}|^{\frac{\alpha}{n}}\prod_{i=1,i\neq j}^m\left(\frac{1}{|Q^{\ast}|}\int_{Q^{\ast}}|f_i\left(y_i\right)|^{s_i}dy_i\right)^{\frac{1}{s_i}}\left(\frac{1}{|Q^{\ast}|}\int_{Q^{\ast}}|b_j(y_j)-\lambda|^{s_j}|f_j(y_j)|^{s_j}dy_j\right)^{\frac{1}{s_j}}\\
&\leq C|Q^{\ast}|^{\frac{\alpha}{n}}\prod_{i=1,i\neq j}^m\left(\frac{1}{|Q^{\ast}|}\int_{Q^{\ast}}|f_i\left(y_i\right)|^{s_i}dy_i\right)^{\frac{1}{s_i}}\left(\frac{1}{|Q^{\ast}|}\int_{Q^{\ast}}|b_j(y_j)-\lambda|^{s_j h_2'}dy_j\right)^{\frac{1}{s_j h_2'}}\\
&\ \ \ \ \times\left(\frac{1}{|Q^{\ast}|}\int_{Q^{\ast}}|f_j\left(y_j\right)|^{s_j h_2}dy_j\right)^{\frac{1}{s_j h_2}}\\
&\leq C\|b_j\|_{{\rm BMO}}|Q^{\ast}|^{\frac{\alpha}{n}}\prod_{i=1}^m\left(\frac{1}{|Q^{\ast}|}\int_{Q^{\ast}}|f_i\left(y_i\right)|^{t}dy_i\right)^{\frac{1}{t}}\\
&\leq C\|b_j\|_{{\rm BMO}}\mathcal{M}_{\alpha,t}(\vec f)\left(x\right).
\end{align*}

To estimate $\uppercase\expandafter{\romannumeral+5}_{\rho_1,\ldots,\rho_m}$, we choose $c_{j,\rho_1,\ldots,\rho_m}=T_{\alpha}\left(f_1^{\rho_1},\ldots,\left(b_j-\lambda\right)f_j^{\rho_j},\ldots,f_m^{\rho_m}\right)\left(z_0\right)$ for a fixed $z_0\in 4Q \setminus 3Q$. Since $\varOmega_{2}\subset Q^{\ast}$, we can get $\varOmega_{k+2}\subset 2^kQ^{\ast}$. Since $|z-z_0|\sim l(Q)$ when $z\in Q$, then $|\varOmega_{k+2}|\sim |2^kQ^{\ast}|$. Let $h_3=\frac{t}{p_0'}$ with $\frac{1}{h_3}+\frac{1}{h_3'}=1$. By H\"{o}lder's inequality, Lemma \ref{Lemma2.5} and $s_i\leq p_0'<t \left(i=1,\ldots,m\right)$, we conclude that
\begin{align*}
\uppercase\expandafter{\romannumeral+5}_{\rho_1,\ldots,\rho_m}\leq &\frac{C}{|Q|}\int_Q|T_{\alpha}\left(f_1^{\rho_1},\ldots,\left(b_j-\lambda\right)f_j^{\rho_j},\ldots,f_m^{\rho_m}\right)\left(z\right)-T_{\alpha}\left(f_1^{\rho_1},\ldots,\left(b_j-\lambda\right)f_j^{\rho_j},\ldots,f_m^{\rho_m}\right)\left(z_0\right)|dz\\
\leq &\frac{C}{|Q|}\int_Q\int_{({\mathbb{R}}^n)^m\setminus (Q^{\ast})^{m}}|K_{\alpha}\left(z,\vec y\right)-K_{\alpha}\left(z_0,\vec y\right)|\prod_{i=1}^{m}|f_i\left(y_i\right)||b_j(y_j)-\lambda|d\vec ydz\\
\leq &\frac{C}{|Q|}\int_Q\sum_{k=1}^{\infty}\int_{(\varOmega_{k+2})^m\setminus (\varOmega_{k+1})^{m}}|K_{\alpha}\left(z,\vec y\right)-K_{\alpha}\left(z_0,\vec y\right)|\prod_{i=1}^{m}|f_i\left(y_i\right)||b_j(y_j)-\lambda|d\vec ydz\\
\leq &\frac{C}{|Q|}\int_Q\sum_{k=1}^{\infty}\left(\int_{(\varOmega_{k+2})^m\setminus (\varOmega_{k+1})^{m}}|K_{\alpha}\left(z,\vec y\right)-K_{\alpha}\left(z_0,\vec y\right)|^{p_0}d\vec y\right)^{\frac{1}{p_0}}\\
\ \ \ \ &\times\left(\int_{(\varOmega_{k+2})^m}\left(\prod_{i=1}^{m}|f_i\left(y_i\right)||b_j(y_j)-\lambda|\right)^{p_0'}d\vec y\right)^{\frac{1}{p_0'}}dz\\
\leq &\frac{C}{|Q|}\int_{Q}\sum_{k=1}^{\infty}C_k|z-z_0|^{\alpha-\frac{mn}{p_0'}}2^{k(\alpha-\frac{mn}{p_0'})}|\varOmega_{k+2}|^{\frac{m}{p_0'}-\frac{\alpha}{n}}|\varOmega_{k+2}|^{\frac{\alpha}{n}}\\
\ \ \ \ &\times\prod_{i=1,i\neq j}^m\left(\frac{1}{|\varOmega_{k+2}|}\int_{\varOmega_{k+2}}|f_i\left(y_i\right)|^{p_0'}dy_i\right)^{\frac{1}{p_0'}}\left(\frac{1}{|\varOmega_{k+2}|}\int_{\varOmega_{k+2}}|b_j\left(y_j\right)-\lambda|^{p_0'}|f_j\left(y_j\right)|^{p_0'}dy_j\right)^{\frac{1}{p_0'}}dz\\
\leq &\frac{C}{|Q|}\int_{Q}\sum_{k=1}^{\infty}C_k|\varOmega_{k+2}|^{\frac{\alpha}{n}}\prod_{i=1,i\neq j}^m\left(\frac{1}{|\varOmega_{k+2}|}\int_{\varOmega_{k+2}}|f_i\left(y_i\right)|^{t}dy_i\right)^{\frac{1}{t}}\\
\ \ \ \ &\times\left(\frac{1}{|\varOmega_{k+2}|}\int_{\varOmega_{k+2}}|f_j\left(y_j\right)|^{p_0'h_3}dy_j\right)^{\frac{1}{p_0'h_3}}\left(\frac{1}{|\varOmega_{k+2}|}\int_{\varOmega_{k+2}}|b_j\left(y_j\right)-\lambda|^{p_0'h_3'}dy_j\right)^{\frac{1}{p_0'h_3'}}dz\\
\leq &\frac{C}{|Q|}\int_{Q}\sum_{k=1}^{\infty}C_k|\varOmega_{k+2}|^{\frac{\alpha}{n}}\prod_{i=1,i\neq j}^m\left(\frac{1}{|\varOmega_{k+2}|}\int_{\varOmega_{k+2}}|f_i\left(y_i\right)|^{t}dy_i\right)^{\frac{1}{t}}\\
\ \ \ \ &\times\left(\frac{1}{|\varOmega_{k+2}|}\int_{\varOmega_{k+2}}|f_j\left(y_j\right)|^{t}dy_i\right)^{\frac{1}{t}}\left(\frac{1}{|2^kQ^{\ast}|}\int_{2^kQ^{\ast}}|b_j\left(y_j\right)-\lambda|^{p_0'h_3'}dy_j\right)^{\frac{1}{p_0'h_3'}}dz\\
\leq &\frac{C}{|Q|}\int_{Q}\sum_{k=1}^{\infty}C_k|\varOmega_{k+2}|^{\frac{\alpha}{n}}\prod_{i=1}^{m}\left(\frac{1}{|\varOmega_{k+2}|}\int_{\varOmega_{k+2}}|f_i\left(y_i\right)|^{t}dy_i\right)^{\frac{1}{t}}k\|b_j\|_{{\rm BMO}}dz\\
\leq &C\|b_j\|_{{\rm BMO}}\mathcal{M}_{\alpha,t}(\vec f)\left(x\right).
\end{align*}
Then, we have
\begin{align*}
&\inf\limits_c\left(\frac{1}{|Q|}\int_Q\left|\left|T_{\vec b,\alpha}(\vec f)\left(z\right)\right|^{\delta}-c\right|dz\right)^{\frac{1}{\delta}}\leq\left(\frac{1}{|Q|}\int_Q\left|\left|\sum_{i=1}^mT_{\vec b,\alpha}^{i}(\vec f)\left(z\right)\right|^{\delta}-\left|\sum_{i=1}^mc_i\right|^{\delta}\right|dz\right)^{\frac{1}{\delta}}\\
&\leq C\sum_{i=1}^m\left(\frac{1}{|Q|}\int_Q\left|T_{\vec b,\alpha}^i(\vec f)\left(z\right)-c_i\right|^{\delta}dz\right)^{\frac{1}{\delta}}\leq C\big\|\vec b\big\|_{\left({\rm BMO}\right)^m}\left(M_{\varepsilon}(T_{\alpha}(\vec f))\left(x\right)+\mathcal{M}_{\alpha,t}(\vec f)\left(x\right)\right).
\end{align*}

\noindent  Then, we have
\begin{align*}
&M_{\delta}^{\#}(T_{\vec b,\alpha}(\vec f))\left(x\right)\thicksim\sup\limits_{Q\ni x}\inf\limits_c\left(\frac{1}{|Q|}\int_Q\left|\left|T_{\vec b,\alpha}(\vec f)\left(z\right)\right|^{\delta}-c\right|dz\right)^{\frac{1}{\delta}}\\
&\leq C\big\|\vec b\big\|_{\left({\rm BMO}\right)^m}\left(M_{\varepsilon}(T_{\alpha}(\vec f))\left(x\right)+\mathcal{M}_{\alpha,t}(\vec f)\left(x\right)\right).
\end{align*}

\noindent  Thus, Theorem \ref{Theorem2.2} is proved.
\end{proof}

\subsection{The Boundedness on Weighted Lebesgue Spaces}

\quad\quad In this subsection, we use Theorem \ref{Theorem 2.1} and Theorem \ref{Theorem2.2} to derive boundedness results for multilinear fractional integral operators with generalized kernels and multilinear commutators on weighted Lebesgue spaces. The main theorem of this subsection is presented below.

\begin{theorem}\label{Theorem 2.3} 
\rm Let $m\geq 2$, $0<\alpha p_0'<mn$, $T_{\alpha}$ be an $m$-linear fractional integral operator with generalized kernel as in Definition \ref{definition1.2} and $\sum_{k=1}^{\infty}C_k<\infty$. Suppose that $\vec P=\left(p_1,\cdots,p_m\right)$, $\frac{1}{p}=\frac{1}{p_1}+\ldots+\frac{1}{p_m}$ with $p_0'\leq p_j<\infty\left(j=1,\ldots,m\right)$, $\frac{1}{q}=\frac{1}{p}-\frac{\alpha}{n}>0$ and $\vec\omega\in A_{\frac{\vec P}{p_0'},\frac{q}{p_0'}}$.
\begin{enumerate}[(i)] 
     \item If $p_0'<p_j<\infty$ for all $j=1,\ldots,m$, then
\begin{align*}
\left\|T_{\alpha}(\vec f)\right\|_{L^{q}\left(v_{\vec\omega}^{\frac{q}{p_0'}}\right)}\leq C\prod_{j=1}^m\left\|f_j\right\|_{L^{p_j}\left(\omega_j^{\frac{p_j}{p_0'}}\right)}.
\end{align*}
     \item If $p_0'\leq p_j<\infty$ for all $j=1,\ldots,m$ and at least one of $p_j=p_0'$, then 
\begin{align*}
\left\|T_{\alpha}(\vec f)\right\|_{L^{q,\infty}\left(v_{\vec\omega}^{\frac{q}{p_0'}}\right)}\leq C\prod_{j=1}^m\left\|f_j\right\|_{L^{p_j}\left(\omega_j^{\frac{p_j}{p_0'}}\right)}.
\end{align*}
\end{enumerate}
\end{theorem}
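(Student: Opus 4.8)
The plan is to deduce both estimates from the pointwise sharp-maximal bound of Theorem~\ref{Theorem 2.1}, combined with the Fefferman--Stein inequality (Lemma~\ref{Lemma2.1}) and the weighted bounds for the multilinear fractional maximal operator (Lemma~\ref{Lemma2.3}), the latter applied after rescaling every exponent by the factor $p_0'$. The key elementary identity is
\begin{align*}
\mathcal{M}_{\alpha,p_0'}(\vec f)(x)=\Big[\mathcal{M}_{\alpha p_0'}\big(|f_1|^{p_0'},\ldots,|f_m|^{p_0'}\big)(x)\Big]^{1/p_0'},
\end{align*}
which is immediate from Definition~\ref{definition2.3}, together with $\big\|g^{1/p_0'}\big\|_{L^{r}(w)}=\big\|g\big\|_{L^{r/p_0'}(w)}^{1/p_0'}$ and the same relation with $L^{r,\infty}$, $L^{r/p_0',\infty}$ in place of $L^{r}$, $L^{r/p_0'}$.

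Put $\widetilde\alpha:=\alpha p_0'$, $\widetilde{\vec P}:=\vec P/p_0'$ (so $\widetilde p_j=p_j/p_0'$), and $\widetilde q:=q/p_0'$, keeping $\vec\omega$ unchanged. One checks directly that $1/\widetilde q=1/\widetilde p-\widetilde\alpha/n$, that the hypothesis $0<\alpha p_0'<mn$ is exactly $0<\widetilde\alpha<mn$, that $p_0'\le p_j$ is exactly $1\le\widetilde p_j$, and that $\vec\omega\in A_{\vec P/p_0',\,q/p_0'}$ is exactly $\vec\omega\in A_{\widetilde{\vec P},\widetilde q}$. Applying Lemma~\ref{Lemma2.4} with these rescaled parameters yields $v_{\vec\omega}^{\,q/p_0'}=v_{\vec\omega}^{\,\widetilde q}\in A_{m\widetilde q}\subset A_{\infty}$, so that Lemma~\ref{Lemma2.1} is available with the weight $w:=v_{\vec\omega}^{\,q/p_0'}$.

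Now let $\vec f$ be an $m$-tuple of bounded functions with compact support (the framework of Theorem~\ref{Theorem 2.1}) and fix an admissible $\delta$ there, say $0<\delta<\min\{1,\,p_0'n/(mn-\alpha p_0')\}$ (corresponding to the choice $s_j=p_0'$). Using $|g|\le M_{\delta}g$ a.e.\ and then Lemma~\ref{Lemma2.1}, then Theorem~\ref{Theorem 2.1}, and finally the rescaling identity,
\begin{align*}
\|T_{\alpha}(\vec f)\|_{L^{q}(w)}\le C\big\|M_{\delta}^{\#}(T_{\alpha}(\vec f))\big\|_{L^{q}(w)}\le C\big\|\mathcal{M}_{\alpha,p_0'}(\vec f)\big\|_{L^{q}(w)}=C\big\|\mathcal{M}_{\widetilde\alpha}\big(|f_1|^{p_0'},\ldots,|f_m|^{p_0'}\big)\big\|_{L^{\widetilde q}(v_{\vec\omega}^{\widetilde q})}^{1/p_0'}.
\end{align*}
In case (i) every $\widetilde p_j>1$, so Lemma~\ref{Lemma2.3}(i) bounds the last quantity by $C\prod_{j=1}^{m}\big\||f_j|^{p_0'}\big\|_{L^{\widetilde p_j}(\omega_j^{\widetilde p_j})}^{1/p_0'}=C\prod_{j=1}^{m}\|f_j\|_{L^{p_j}(\omega_j^{p_j/p_0'})}$, which is the asserted strong-type estimate. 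In case (ii) at least one $\widetilde p_j=1$; replacing $L^{q}(w)$ by $L^{q,\infty}(w)$ throughout, using the second inequality of Lemma~\ref{Lemma2.1}, monotonicity of the weak-type norm under the pointwise bound of Theorem~\ref{Theorem 2.1}, Lemma~\ref{Lemma2.3}(ii), and the weak-type form of the rescaling identity, one obtains the asserted weak-type estimate.

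It remains to pass from bounded compactly supported $\vec f$ to arbitrary $f_j\in L^{p_j}(\omega_j^{p_j/p_0'})$, and to verify the a~priori finiteness that Lemma~\ref{Lemma2.1} requires; this is the only genuinely new step and hence the one to be careful about. For bounded $\vec f$ with compact support, the finiteness is obtained by writing $T_{\alpha}(\vec f)$ as a local part, which lies in $L^{p_0'n/(mn-\alpha p_0'),\,\infty}$ by Definition~\ref{definition1.2}(2) with $s_j=p_0'$, plus a tail part, for which the size condition \eqref{1.1} gives $|T_{\alpha}(\vec f)(x)|\le C|x|^{-(mn-\alpha)}\prod_{j}\|f_j\|_{L^{1}}$ for large $|x|$; since $w\in A_{\infty}$ both contributions have finite $L^{q}(w)$- (respectively $L^{q,\infty}(w)$-) norm, and $M_{\delta}$ preserves this for $\delta$ small enough. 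Finally, approximating a general $\vec f$ by bounded compactly supported $m$-tuples $\vec f^{(k)}$ with $|f_j^{(k)}|\uparrow|f_j|$, the dominated convergence theorem (valid for a.e.\ $x$ via the size condition) gives $T_{\alpha}(\vec f^{(k)})\to T_{\alpha}(\vec f)$ pointwise, so Fatou's lemma upgrades the bound in case (i), while the standard approximation procedure for weak-type inequalities handles case (ii). Since the main inequality is assembled entirely from results already established, I expect this routine last step to be the only obstacle.
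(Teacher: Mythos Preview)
Your proposal is correct and follows essentially the same route as the paper: reduce to Theorem~\ref{Theorem 2.1} via the Fefferman--Stein inequality (Lemma~\ref{Lemma2.1}), note that Lemma~\ref{Lemma2.4} gives $v_{\vec\omega}^{q/p_0'}\in A_\infty$, and then rescale $\mathcal{M}_{\alpha,p_0'}$ to $\mathcal{M}_{\alpha p_0'}$ so that Lemma~\ref{Lemma2.3} applies with exponents $\vec P/p_0'$ and $q/p_0'$; the weak-type case is the same with Lemma~\ref{Lemma2.3}(ii). The paper's proof is exactly this chain of inequalities and omits the a~priori finiteness and density discussion you added at the end. One caution on that added material: Definition~\ref{definition1.2}(2) fixes a single tuple $(s_1,\ldots,s_m)$, so you cannot freely choose $s_j=p_0'$ there; use the given $s$ in the bound for $\delta$ and in the local-part estimate, and note that your tail bound $|x|^{-(mn-\alpha)}$ need not lie in $L^q(w)$ for a general $A_\infty$ weight, so that particular finiteness justification needs more care than stated.
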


\begin{proof}
Firstly, we choose $\delta$ such that $0<\delta<\min\left\{1,\frac{sn}{n-s\alpha}\right\}$ and let $\beta=\alpha p_0'$.
\begin{enumerate}[(i)] 
\item  By Lemma \ref{Lemma2.4}, for $\vec\omega\in A_{\frac{\vec P}{p_0'},\frac{q}{p_0'}}$ there has $v_{\vec\omega}^{\frac{q}{p_0'}}\in A_{\infty}$. Then, by Lemma \ref{Lemma2.1}, Theorem \ref{Theorem 2.1} and Lemma \ref{Lemma2.3} (i), we conclude that
\begin{align*}
\left\|T_{\alpha}(\vec f)\right\|_{L^{q}\left(v_{\vec\omega}^{\frac{q}{p_0'}}\right)}\leq &\left\|M_{\delta}(T_{\alpha}(\vec f))\right\|_{L^{q}\left(v_{\vec\omega}^{\frac{q}{p_0'}}\right)}\leq C\left\|M_{\delta}^{\#}(T_{\alpha}(\vec f))\right\|_{L^{q}\left(v_{\vec\omega}^{\frac{q}{p_0'}}\right)}\\ 
\leq &C\left\|\mathcal{M}_{\alpha,p_0'}(\vec f)\right\|_{L^{q}\left(v_{\vec\omega}^{\frac{q}{p_0'}}\right)}=C\left\|\left[\mathcal{M}_{\beta}(\vec f^{p_0'})\right]^{\frac{1}{p_0'}}\right\|_{L^{q}\left(v_{\vec\omega}^{\frac{q}{p_0'}}\right)}\\
=&C\left\|\mathcal{M}_{\beta}(\vec f^{p_0'})\right\|_{L^{\frac{q}{p_0'}}\left(v_{\vec\omega}^{\frac{q}{p_0'}}\right)}^{\frac{1}{p_0'}}\leq C\prod_{j=1}^{m}\left\||f_j|^{p_0'}\right\|_{L^{\frac{p_j}{p_0'}}\left(\omega_j^{\frac{p_j}{p_0'}}\right)}^{\frac{1}{p_0'}}\\
=&C\prod_{j=1}^m\|f_j\|_{L^{p_j}\left(\omega_j^{\frac{p_j}{p_0'}}\right)}.
\end{align*}
 Thus, Theorem \ref{Theorem 2.3} (i) is proved.

\item By Lemma \ref{Lemma2.1}, Theorem \ref{Theorem 2.1} and Lemma \ref{Lemma2.3} (ii), we obtain
\begin{align*}
\left\|T_{\alpha}(\vec f)\right\|_{L^{q,\infty}\left(v_{\vec\omega}^{\frac{q}{p_0'}}\right)}\leq &\left\|M_{\delta}(T_{\alpha}(\vec f))\right\|_{L^{q,\infty}\left(v_{\vec\omega}^{\frac{q}{p_0'}}\right)}\leq C\left\|M_{\delta}^{\#}(T_{\alpha}(\vec f))\right\|_{L^{q,\infty}\left(v_{\vec\omega}^{\frac{q}{p_0'}}\right)}\\
\leq &C\left\|{M}_{\alpha,p_0'}(\vec f)\right\|_{L^{q,\infty}\left(v_{\vec\omega}^{\frac{q}{p_0'}}\right)}=C\left\|\left[(\mathcal{M}_{\beta}({\vec f^{p_0'}})\right]^{\frac{1}{p_0'}}\right\|_{L^{q,\infty}\left(v_{\vec\omega}^{\frac{q}{p_0'}}\right)}\\
= &C\sup_{\lambda>0}\lambda\left|v_{\vec\omega}^{\frac{q}{p_0'}}\left(\left\{x\in{\mathbb{R}}^n:[(\mathcal{M}_{\beta}({\vec f^{p_0'}})]^{\frac{1}{p_0'}}>\lambda\right\}\right)\right|^{\frac{1}{q}}\\
= &C\sup_{\lambda>0}\lambda\left|v_{\vec\omega}^{\frac{q}{p_0'}}\left(\left\{x\in{\mathbb{R}}^n:\mathcal{M}_{\beta}({\vec f^{p_0'}})>\lambda^{p_0'}\right\}\right)\right|^{\frac{1}{q}}\\
= &C\sup_{\lambda>0}\lambda^{\frac{1}{p_0'}}\left|v_{\vec\omega}^{\frac{q}{p_0'}}\left(\left\{x\in{\mathbb{R}}^n:\mathcal{M}_{\beta}({\vec f^{p_0'}})>\lambda\right\}\right)\right|^{\frac{1}{q}}\\
= &C\left[\sup_{\lambda>0}\lambda\left|v_{\vec\omega}^{\frac{q}{p_0'}}\left(\left\{x\in{\mathbb{R}}^n:\mathcal{M}_{\beta}({\vec f^{p_0'}})>\lambda\right\}\right)\right|^{\frac{p_0'}{q}}\right]^{\frac{1}{p_0'}}\\
= &C\left\|\mathcal{M}_{\beta}({\vec f^{p_0'}})\right\|_{L^{\frac{q}{p_0'},\infty}\left(v_{\vec\omega}^{\frac{q}{p_0'}}\right)}^{\frac{1}{p_0'}}\leq C\prod_{j=1}^m\left\||f_j|^{p_0'}\right\|_{L^{\frac{p_j}{p_0'}}\left(\omega_j^{\frac{p_j}{p_0'}}\right)}^{\frac{1}{p_0'}}\\
= &C\prod_{j=1}^m\left\|f_j\right\|_{L^{p_j}\left(\omega_j^{\frac{p_j}{p_0'}}\right)}.
\end{align*}
\end{enumerate}
Thus, Theorem \ref{Theorem 2.3} (ii) is proved.
\end{proof}

\begin{theorem}\label{Theorem2.4} 
\rm Let $m\geq2$, $p_0'<t<\infty$, $0<\alpha t<mn$, $T_{\alpha}$ be an $m$-linear fractional integral operator with generalized kernel as defined in Definition \ref{definition1.2} and $\sum_{k=1}^{\infty}kC_k<\infty$. Suppose that $\vec b\in {\rm BMO}^m$, $\vec P=\left(p_1,\ldots,p_m\right)$, $\frac{1}{p}=\frac{1}{p_1}+\cdots+\frac{1}{p_m}$ with $t\leq p_j<\infty\left(j=1,\ldots,m\right)$, $\frac{1}{q}=\frac{1}{p}-\frac{\alpha}{n}>0$ and $\vec\omega\in A_{\frac{\vec P}{t},\frac{q}{t}}$.
\begin{itemize}
\item [\rm (i)]If $t<p_j<\infty$ for all $j=1,\ldots,m$, then 
\begin{align*}
\left\|T_{\vec b,\alpha}(\vec f)\right\|_{L^q\left(v_{\vec\omega}^{\frac{q}{t}}\right)}\leq C\big\|\vec b\big\|_{\left({\rm BMO}\right)^m}\prod_{j=1}^m\|f_j\|_{L^{p_j}\left(\omega_j^{\frac{p_j}{t}}\right)}.
\end{align*}

\item [\rm (ii)]If $t\leq p_j<\infty$ for all $j=1,\ldots,m$ and at least one of $p_j=t$, then 
\begin{align*}
\left\|T_{\vec b,\alpha}(\vec f)\right\|_{L^{q,\infty}\left(v_{\vec\omega}^{\frac{q}{t}}\right)}\leq C\big\|\vec b\big\|_{\left({\rm BMO}\right)^m}\prod_{j=1}^m\|f_j\|_{L^{p_j}\left(\omega_j^{\frac{p_j}{t}}\right)}.
\end{align*}
\end{itemize}
\end {theorem}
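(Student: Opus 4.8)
The plan is to run the argument of Theorem \ref{Theorem 2.3}, with the sharp maximal estimate of Theorem \ref{Theorem 2.1} replaced (for the commutator) by that of Theorem \ref{Theorem2.2}, and to handle the auxiliary term $M_\varepsilon(T_\alpha(\vec f))$ produced by Theorem \ref{Theorem2.2} via a second application of Fefferman--Stein together with Theorem \ref{Theorem 2.1}. First I fix exponents $\delta,\varepsilon$ with $0<\delta<\varepsilon<\min\{1,\tfrac{sn}{n-s\alpha}\}$ (this range is nonempty because $\alpha p_0'<\alpha t<mn$), set $\beta:=\alpha t$, and note that $\sum_{k}kC_k<\infty$ forces both $\sum_k C_k<\infty$ and $\sum_k kC_k<\infty$, so the hypotheses of Theorems \ref{Theorem 2.1} and \ref{Theorem2.2} are met. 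Since $\vec\omega\in A_{\frac{\vec P}{t},\frac{q}{t}}$, Lemma \ref{Lemma2.4}, applied with fractional parameter $\beta$, gives $v_{\vec\omega}^{q/t}\in A_{mq/t}\subset A_\infty$, which is exactly what is needed to use Lemma \ref{Lemma2.1} against the weight $v_{\vec\omega}^{q/t}$.

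For part (i): from $\|T_{\vec b,\alpha}(\vec f)\|_{L^q(v_{\vec\omega}^{q/t})}\le\|M_\delta(T_{\vec b,\alpha}(\vec f))\|_{L^q(v_{\vec\omega}^{q/t})}\le C\|M_\delta^\#(T_{\vec b,\alpha}(\vec f))\|_{L^q(v_{\vec\omega}^{q/t})}$ and Theorem \ref{Theorem2.2}, it suffices to bound $\|\mathcal{M}_{\alpha,t}(\vec f)\|_{L^q(v_{\vec\omega}^{q/t})}$ and $\|M_\varepsilon(T_\alpha(\vec f))\|_{L^q(v_{\vec\omega}^{q/t})}$. For the first I use the identity $\mathcal{M}_{\alpha,t}(\vec f)=[\mathcal{M}_{\beta}(\vec{f}^{\,t})]^{1/t}$ with $\vec{f}^{\,t}:=(|f_1|^t,\dots,|f_m|^t)$ and the homogeneity $\|[\mathcal{M}_\beta(\vec{f}^{\,t})]^{1/t}\|_{L^q(w)}=\|\mathcal{M}_\beta(\vec{f}^{\,t})\|_{L^{q/t}(w)}^{1/t}$, exactly as in the proof of Theorem \ref{Theorem 2.3}, and then apply Lemma \ref{Lemma2.3}(i) with exponent vector $\vec P/t$, target exponent $q/t$, and weights $\vec\omega$ (its hypotheses hold under the substitutions $\alpha\mapsto\beta$, $p_j\mapsto p_j/t$, $q\mapsto q/t$, using $t<p_j<\infty$), obtaining $C\prod_{j}\||f_j|^t\|_{L^{p_j/t}(\omega_j^{p_j/t})}^{1/t}=C\prod_j\|f_j\|_{L^{p_j}(\omega_j^{p_j/t})}$. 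For the second term I apply Lemma \ref{Lemma2.1} once more, now with exponent $\varepsilon$, to get $\|M_\varepsilon(T_\alpha(\vec f))\|_{L^q(v_{\vec\omega}^{q/t})}\le C\|M_\varepsilon^\#(T_\alpha(\vec f))\|_{L^q(v_{\vec\omega}^{q/t})}$; since $\varepsilon<\min\{1,\tfrac{sn}{n-s\alpha}\}$, Theorem \ref{Theorem 2.1} bounds this by $C\|\mathcal{M}_{\alpha,p_0'}(\vec f)\|_{L^q(v_{\vec\omega}^{q/t})}$, and the pointwise inequality $\mathcal{M}_{\alpha,p_0'}(\vec f)\le\mathcal{M}_{\alpha,t}(\vec f)$ (Jensen, since $p_0'<t$) reduces this to the term already estimated.

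For part (ii): the same chain works verbatim with the weak-type half of Lemma \ref{Lemma2.1} and with Lemma \ref{Lemma2.3}(ii) in place of (i); the only change is the homogeneity relation for the weak quasi-norm, $\|[\mathcal{M}_\beta(\vec{f}^{\,t})]^{1/t}\|_{L^{q,\infty}(w)}=\|\mathcal{M}_\beta(\vec{f}^{\,t})\|_{L^{q/t,\infty}(w)}^{1/t}$, whose proof is precisely the distribution-function computation already carried out in Theorem \ref{Theorem 2.3}(ii).

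The proof is essentially routine once Theorems \ref{Theorem 2.1} and \ref{Theorem2.2} are available; the one point that needs genuine care is the a priori finiteness hypothesis of Lemma \ref{Lemma2.1}: one must first know that $M_\delta(T_{\vec b,\alpha}(\vec f))$ and $M_\varepsilon(T_\alpha(\vec f))$ belong to the relevant (weighted, possibly weak) Lebesgue space before the Fefferman--Stein inequality may be invoked. As usual this is arranged by working first with bounded $f_j$ of compact support (as in the statement)---using Definition \ref{definition1.2}(2) and Theorem \ref{Theorem 2.3} to produce the required a priori bound---and then removing the restriction by a density argument. A further, purely bookkeeping, task is to check that the exponent/weight conditions of Theorem \ref{Theorem2.4} do become the hypotheses of Lemmas \ref{Lemma2.3} and \ref{Lemma2.4} under the rescaling $\alpha\mapsto\beta=\alpha t$, $p_j\mapsto p_j/t$, $q\mapsto q/t$.
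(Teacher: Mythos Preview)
Your proposal is correct and follows essentially the same route as the paper's own proof: the chain Lemma~\ref{Lemma2.1} $\to$ Theorem~\ref{Theorem2.2} $\to$ Lemma~\ref{Lemma2.1} $\to$ Theorem~\ref{Theorem 2.1} $\to$ $\mathcal{M}_{\alpha,p_0'}\le\mathcal{M}_{\alpha,t}$ $\to$ rescaling $\mathcal{M}_{\alpha,t}(\vec f)=[\mathcal{M}_{\beta}(\vec f^{\,t})]^{1/t}$ $\to$ Lemma~\ref{Lemma2.3} is exactly the paper's argument in both the strong and weak cases. Your additional remarks---that $\sum_k kC_k<\infty$ forces $\sum_k C_k<\infty$, that Lemma~\ref{Lemma2.4} supplies $v_{\vec\omega}^{q/t}\in A_\infty$, and the a~priori finiteness caveat for Lemma~\ref{Lemma2.1}---fill in points the paper leaves implicit.
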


\begin{proof}
Firstly, we choose $\delta, \varepsilon$ such that $0<\delta<\varepsilon<\min\left\{1,\frac{sn}{n-s\alpha}\right\}$ and let $\beta=\alpha t$.
\begin{itemize}
\item [\rm (i)] By $\vec\omega\in A_{\frac{\vec P}{t},\frac{q}{t}}$, Lemma \ref{Lemma2.1}, Theorem \ref{Theorem2.2}, Theorem \ref{Lemma2.1} and Lemma \ref{Lemma2.3} (i), we conclude that

\begin{align*}
\left\|T_{\vec b,\alpha}(\vec f)\right\|_{L^q\left(v^{\frac{q}{t}}_{\vec\omega}\right)}\leq &\left\|M_{\delta}(T_{\vec b,\alpha}(\vec f))\right\|_{L^q\left(v^{\frac{q}{t}}_{\vec\omega}\right)}\leq \left\|M_{\delta}^{\#}(T_{\vec b,\alpha}(\vec f))\right\|_{L^q\left(v^{\frac{q}{t}}_{\vec\omega}\right)}\\
\leq  &C\big\|\vec b\big\|_{\left({\rm BMO}\right)^m}\left\|M_{\varepsilon}(T_{\alpha}(\vec f))+\mathcal{M}_{\alpha,t}(\vec f)\right\|_{L^q\left(v^{\frac{q}{t}}_{\vec\omega}\right)}\\
\leq &C\big\|\vec b\big\|_{\left({\rm BMO}\right)^m}\left(\left\|M_{\varepsilon}(T_{\alpha}(\vec f))\right\|_{L^q\left(v^{\frac{q}{t}}_{\vec\omega}\right)}+\left\|\mathcal{M}_{\alpha,t}(\vec f)\right\|_{L^q\left(v^{\frac{q}{t}}_{\vec\omega}\right)}\right)\\
\leq  &C\big\|\vec b\big\|_{\left({\rm BMO}\right)^m}\left(\left\|M_{\varepsilon}^{\#}(T_{\alpha}(\vec f))\right\|_{L^q\left(v^{\frac{q}{t}}_{\vec\omega}\right)}+\left\|\mathcal{M}_{\alpha,t}(\vec f)\right\|_{L^q\left(v^{\frac{q}{t}}_{\vec\omega}\right)}\right)\\
\leq  &C\big\|\vec b\big\|_{\left({\rm BMO}\right)^m}\left(\left\|\mathcal{M}_{\alpha,p_0'}(\vec f)\right\|_{L^q\left(v^{\frac{q}{t}}_{\vec\omega}\right)}+\left\|\mathcal{M}_{\alpha,t}(\vec f)\right\|_{L^q\left(v^{\frac{q}{t}}_{\vec\omega}\right)}\right)\\
\leq &C\big\|\vec b\big\|_{\left({\rm BMO}\right)^m}\left\|\mathcal{M}_{\alpha,t}(\vec f)\right\|_{L^q\left(v^{\frac{q}{t}}_{\vec\omega}\right)}= C\big\|\vec b\big\|_{\left({\rm BMO}\right)^m}\left\|\left[\mathcal{M}_{\beta}(\vec f^{t})\right]^{\frac{1}{t}}\right\|_{L^q\left(v^{\frac{q}{t}}_{\vec\omega}\right)}\\
=  &C\big\|\vec b\big\|_{\left({\rm BMO}\right)^m}\left\|\mathcal{M}_{\beta}(\vec f^{t})\right\|_{L^{\frac{q}{t}}\left(v^{\frac{q}{t}}_{\vec\omega}\right)}^{\frac{1}{t}}\leq C\big\|\vec b\big\|_{\left({\rm BMO}\right)^m}\prod_{j=1}^m\left\||f_j|^t\right\|_{L^{\frac{p_j}{t}}\left(\omega_j^{\frac{p_j}{t}}\right)}^{\frac{1}{t}}\\
=  &C\big\|\vec b\big\|_{\left({\rm BMO}\right)^m}\prod_{j=1}^m\|f_j\|_{L^{p_j}\left(\omega_j^{\frac{p_j}{t}}\right)}.
\end{align*}
\noindent Thus, Theorem \ref{Theorem2.4} (i) is proved.

\item [\rm (ii)] By Lemma \ref{Lemma2.1}, Theorem \ref{Theorem2.2}, Theorem \ref{Theorem 2.1} and Lemma \ref{Lemma2.3} (ii), we conclude that
\begin{align*}
\left\|T_{\vec b,\alpha}(\vec f)\right\|_{L^{q,\infty}\left(v_{\vec\omega}^{\frac{q}{t}}\right)}&\leq \left\|M_{\delta}(T_{\vec b,\alpha}(\vec f))\right\|_{L^{q,\infty}\left(v_{\vec\omega}^{\frac{q}{t}}\right)}\leq C\left\|M_{\delta}^{\#}(T_{\vec b,\alpha}(\vec f))\right\|_{L^{q,\infty}\left(v_{\vec\omega}^{\frac{q}{t}}\right)}\\
&\leq C\big\|\vec b\big\|_{{({\rm BMO})}^m}\left\|M_{\varepsilon}(T_{\alpha}(\vec f))+{M}_{\alpha,t}(\vec f)\right\|_{L^{q,\infty}\left(v_{\vec\omega}^{\frac{q}{t}}\right)}\\
&\leq C\big\|\vec b\big\|_{{({\rm BMO})}^m}\left(\left\|M_{\varepsilon}(T_{\alpha}(\vec f))\right\|_{L^{q,\infty}\left(v_{\vec\omega}^{\frac{q}{t}}\right)}+\left\|{M}_{\alpha,t}(\vec f)\right\|_{L^{q,\infty}\left(v_{\vec\omega}^{\frac{q}{t}}\right)}\right)\\
&\leq C\big\|\vec b\big\|_{({\rm BMO})^m}\left(\left\|M^{\#}_{\varepsilon}(T_{\alpha}(\vec f))\right\|_{L^{q,\infty}\left(v_{\vec\omega}^{\frac{q}{t}}\right)}+\left\|{M}_{\alpha,t}(\vec f)\right\|_{L^{q,\infty}\left(v_{\vec\omega}^{\frac{q}{t}}\right)}\right)\\
&\leq C\big\|\vec b\big\|_{({\rm BMO})^m}\left(\left\|{M}_{\alpha,p_0'}(\vec f)\right\|_{L^{q,\infty}\left(v_{\vec\omega}^{\frac{q}{t}}\right)}+\left\|{M}_{\alpha,t}(\vec f)\right\|_{L^{q,\infty}\left(v_{\vec\omega}^{\frac{q}{t}}\right)}\right)\\
&\leq C\big\|\vec b\big\|_{({\rm BMO})^m}\left\|{M}_{\alpha,t}(\vec f)\right\|_{L^{q,\infty}\left(v_{\vec\omega}^{\frac{q}{t}}\right)}\\
&=C\big\|\vec b\big\|_{({\rm BMO})^m}\left\|\left[\mathcal{M}_{\beta}({\vec f^{t}})\right]^{\frac{1}{t}}\right\|_{L^{q,\infty}\left(v_{\vec\omega}^{\frac{q}{t}}\right)}\\
&= C\big\|\vec b\big\|_{({\rm BMO})^m}\sup_{\lambda>0}\lambda\left|v_{\vec\omega}^{\frac{q}{t}}\left(\left\{x\in{\mathbb{R}}^n:\left[\mathcal{M}_{\beta}({\vec f^{t}})\right]^{\frac{1}{t}}(x)>\lambda\right\}\right)\right|^{\frac{1}{q}}\\
&= C\big\|\vec b\big\|_{({\rm BMO})^m}\sup_{\lambda>0}\lambda\left|v_{\vec\omega}^{\frac{q}{t}}\left(\left\{x\in{\mathbb{R}}^n:\mathcal{M}_{\beta}({\vec f^{t}})>\lambda^{t}\right\}\right)\right|^{\frac{1}{q}}\\
&= C\big\|\vec b\big\|_{({\rm BMO})^m}\sup_{\lambda>0}\lambda^{\frac{1}{t}}\left|v_{\vec\omega}^{\frac{q}{t}}\left(\left\{x\in{\mathbb{R}}^n:\mathcal{M}_{\beta}({\vec f^{t}})>\lambda\right\}\right)\right|^{\frac{1}{q}}\\
&= C\big\|\vec b\big\|_{({\rm BMO})^m}\left[\sup_{\lambda>0}\lambda\left|v_{\vec\omega}^{\frac{q}{t}}\left(\left\{x\in{\mathbb{R}}^n:\mathcal{M}_{\beta}({\vec f^{t}})>\lambda\right\}\right)\right|^{\frac{t}{q}}\right]^{\frac{1}{t}}\\
&=C\big\|\vec b\big\|_{({\rm BMO})^m}\left\|\mathcal{M}_{\beta}({\vec f^{t}})\right\|_{L^{\frac{q}{t},\infty}\left(v_{\vec\omega}^{\frac{q}{t}}\right)}^{\frac{1}{t}}\\
&\leq C\big\|\vec b\big\|_{({\rm BMO})^m}\prod_{j=1}^m\left\||f_j|^{t}\right\|_{L^{\frac{p_j}{t}}\left(\omega_j^{\frac{p_j}{t}}\right)}^{\frac{1}{t}}\\
&=C\big\|\vec b\big\|_{({\rm BMO})^m}\prod_{j=1}^m\|f_j\|_{L^{p_j}\left(\omega_j^{\frac{p_j}{t}}\right)}.
\end{align*}
\end{itemize}
Thus, Theorem \ref{Theorem2.4} (ii) is proved.
\end{proof}

\begin{remark}\label{remark 2.1}
\rm When the generalized kernel condition goes back to the Dini kernel condition, the boundedness of $m$-linear fractional integral operators with Dini's type kernels on Lebesgue spaces has been demonstrated by Wu and Zhang in \cite{WZ2023}.
\end{remark}

\begin{corollary}\label{Corollary 2.1}
\rm When the generalized kernel condition goes back to the Dini kernel condition, the boundedness of multilinear commutators of $m$-linear fractional integral operators with Dini's type kernels on Lebesgue spaces remains new.
\end{corollary}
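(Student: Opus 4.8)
The plan is to read the corollary off as a special case of the generalized-kernel machinery developed in Section \ref{sec2}. First I would invoke Remark \ref{remark 1.1}: taking $C_k=\omega(2^{-k})$, the Dini smoothness condition \eqref{1.2} implies the generalized smoothness condition \eqref{1.5} for every $1<p_0<\infty$. Hence any $m$-linear fractional integral operator $T_\alpha$ whose kernel is of type $\omega(t)$ in the sense of Definition \ref{definition1.1}, and which in addition satisfies the weak-type mapping property of Definition \ref{definition1.2}(2), is in particular an $m$-linear fractional integral operator with generalized kernel, so every result of Subsections 2.2 and 2.3 applies to it with $C_k=\omega(2^{-k})$.

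Next I would translate the two summability hypotheses into Dini language. If $\omega\in{\rm Dini}(1)$ then, as recalled in the introduction, $\sum_{k\ge1}C_k=\sum_{k\ge1}\omega(2^{-k})\sim\int_0^1\omega(t)\,\frac{dt}{t}<\infty$, which is the hypothesis of Theorems \ref{Theorem 2.1} and \ref{Theorem 2.3}; and if $\omega\in\log$-${\rm Dini}(1)$, hence a fortiori $\omega\in{\rm Dini}(1)$, then $\sum_{k\ge1}kC_k=\sum_{k\ge1}k\,\omega(2^{-k})\sim\int_0^1\omega(t)\left(1+\log t^{-1}\right)\frac{dt}{t}<\infty$, which is the hypothesis of Theorem \ref{Theorem2.2} and therefore of Theorem \ref{Theorem2.4}. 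Consequently, assuming $\omega\in\log$-${\rm Dini}(1)$, $p_0'<t<\infty$ with $0<\alpha t<mn$, $\vec b\in{\rm BMO}^m$, $\vec P=(p_1,\ldots,p_m)$ with $t\le p_j<\infty$, $\frac{1}{q}=\frac{1}{p}-\frac{\alpha}{n}>0$ and $\vec\omega\in A_{\frac{\vec P}{t},\frac{q}{t}}$, Theorem \ref{Theorem2.4} yields the strong-type estimate $\left\|T_{\vec b,\alpha}(\vec f)\right\|_{L^q(v_{\vec\omega}^{q/t})}\le C\big\|\vec b\big\|_{({\rm BMO})^m}\prod_{j=1}^m\|f_j\|_{L^{p_j}(\omega_j^{p_j/t})}$ when all $p_j>t$, and the corresponding weak-type estimate when some $p_j=t$; the same specialization also carries through to the variable exponent estimates established in Section 3.

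Finally, to justify the word ``new'' I would compare with the literature: Wu and Zhang \cite{WZ2023} introduced the $m$-linear fractional integral operators with Dini's type kernels and proved boundedness only for the operators $T_\alpha$ themselves, on weighted and on variable exponent Lebesgue spaces, but did not treat the commutators $T_{\vec b,\alpha}$; while the commutator results of Lu--Zhang \cite{LZ2014} and Zhang--Sun \cite{ZS2016} concern only the non-fractional ($\alpha=0$) Calder\'on--Zygmund case. Hence the commutator bounds above have no predecessor even under the Dini hypothesis. I do not anticipate a genuine obstacle here; the only mild point is to keep track of the auxiliary exponents $\delta,\varepsilon,t$ and $h_1,h_2,h_3$ so that the hypotheses of Theorems \ref{Theorem2.2} and \ref{Theorem2.4} are satisfied, and to note that the weight class $A_{\frac{\vec P}{t},\frac{q}{t}}$ is non-empty (it contains the constant weight), both of which are inherited directly from the already-established theorems.
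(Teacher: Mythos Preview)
Your proposal is correct and matches the paper's (implicit) approach: the paper states Corollary \ref{Corollary 2.1} without any proof, since it follows immediately from Theorem \ref{Theorem2.4} via Remark \ref{remark 1.1} together with the literature comparison in Remark \ref{remark 2.1}. Your reading is exactly this specialization, and your translation of the summability hypotheses $\sum_k C_k<\infty$ and $\sum_k kC_k<\infty$ into the ${\rm Dini}(1)$ and $\log$-${\rm Dini}(1)$ conditions is precisely the bookkeeping needed; the only superfluous remark is the aside about Section 3, since that is handled separately as Corollary \ref{Corollary 3.1}.
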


\section{The Boundedness on Variable Exponent Lebesgue Spaces}\label{sec3}

\quad\quad Firstly, we introduce several definitions and notations that will be utilized subsequently. Then we establish the boundedness of multilinear fractional integral operators with generalized kernels and multilinear commutators on variable exponent Lebesgue spaces, respectively.

\begin{definition}\label{definition3.1}{\rm(see \cite{CF2006})}
\rm Let $q{\left(\cdot\right)}:{\mathbb{R}}^n\rightarrow[1,\infty)$ be a measurable function.
The variable exponent Lebesgue space $L^{q{\left(\cdot\right)}}\left({\mathbb{R}}^n\right)$ is defined by
\begin{align*}
L^{q{\left(\cdot\right)}}\left({\mathbb{R}}^n\right):=\{f~{\rm is~measurable~function}: F_q\left(f/\eta\right)<\infty~{\rm for~some~constant}~\eta>0\},
\end{align*}
where $F_q\left(f\right):=\int_{{\mathbb{R}}^n}|f\left(x\right)|^{q\left(x\right)}dx$ is a convex functional modular.

The space $L^{q{\left(\cdot\right)}}\left({\mathbb{R}}^n\right)$ is a Banach function space with respect to the Luxemburg type norm.
\begin{align*}
\|f\|_{L^{q{\left(\cdot\right)}}\left({\mathbb{R}}^n\right)}:=\inf\{\eta>0:\int_{{\mathbb{R}}^n}\left({\frac{|f\left(x\right)|}{\eta}}\right)^{q\left(x\right)}dx\leq 1\}.
\end{align*}
\end{definition}

\begin{definition}\label{definition3.2}{\rm(see \cite{CF2006})}
\rm Given a measurable function $q\left({\cdot}\right)$ defined on ${\mathbb{R}}^n$. For $E\subset {\mathbb{R}}^n$, we write
\begin{align*}
q_{-}\left(E\right):=\mathop{\rm ess~inf}\limits_{x\in E}q\left(x\right),~~q_{+}\left(E\right):=\mathop{\rm ess~inf}\limits_{x\in E}q\left(x\right),
\end{align*}
and write $q_{-}\left({\mathbb{R}}^n\right)=q_{-}$ and $q_{+}\left({\mathbb{R}}^n\right)=q_{+}$ simply.
\begin{itemize}
\item [\rm (i)]$q'_{-}=\mathop{\rm ess~inf}\limits_{x\in {\mathbb{R}}^n}q'\left(x\right)=\frac{q_+}{q_+-1}$, $q'_{+}=\mathop{\rm ess~inf}\limits_{x\in {\mathbb{R}}^n}q'\left(x\right)=\frac{q_-}{q_-+1}$.
\item [\rm (ii)]Denote by $\mathscr{P}\left({\mathbb{R}}^n\right)$ the set of all measurable functions $q\left(\cdot\right):{\mathbb{R}}^n\rightarrow \left(1,\infty\right)$ such that
\begin{align*}
1<q_{-}\leq q\left(x\right)\leq q_+<\infty, x\in {\mathbb{R}}^n.
\end{align*}
\item [\rm (iii)]The set $\mathscr{B}\left({\mathbb{R}}^n\right)$ consists of all measurable functions $q\left(\cdot\right)\in\mathscr{P}\left({\mathbb{R}}^n\right)$ such that the Hardy-Littlewood maximal operator $M$ is bounded on $L^{q\left(\cdot\right)}\left({\mathbb{R}}^n\right)$.
\begin{align*}
\|Mf\|_{L^{q\left(\cdot\right)}}\leq \|f\|_{L^{q\left(\cdot\right)}}.
\end{align*}
\end{itemize}
\end{definition}

\begin{definition}\label{definition3.3}{\rm(see \cite{CF2013})}
\rm  Let $q\left(\cdot\right)$ be a real-valued function on ${\mathbb{R}}^n$.
\begin{itemize}
\item [\rm (i)]Denote by $\mathscr{L}_{loc}^{log}\left({\mathbb{R}}^n\right)$ the set of all local $\log$-H\"{o}lder continuous functions $q\left(\cdot\right)$ which satisfy
\begin{align*}
|q\left(x\right)-q\left(y\right)|\leq \frac{-C}{\ln\left(|x-y|\right)},|x-y|\leq 1/2, x,y\in {\mathbb{R}}^n.
\end{align*}
Here, the positive constant $C$ does not depend on $x$ or $y$.
\item [\rm (ii)]The set $\mathscr{L}_{\infty}^{log}\left({\mathbb{R}}^n\right)$ consists of all $\log$-H\"{o}lder continuous functions $q\left(\cdot\right)$ at  infinity, satisfying 
\begin{align*}
|q\left(x\right)-q_{\infty}|\leq \frac{C_{\infty}}{\ln\left(e+|x|\right)}, x\in {\mathbb{R}}^n,
\end{align*}
where $q_{\infty}:=\lim_{|x|\rightarrow\infty}q\left(x\right)$.
\item [\rm (iii)]Denote by $\mathscr{L}^{log}\left({\mathbb{R}}^n\right):=\mathscr{L}_{loc}^{log}\left({\mathbb{R}}^n\right)\cap \mathscr{L}_{\infty}^{log}\left({\mathbb{R}}^n\right)$ the set of all global $\log$-H\"{o}lder continuous functions $q\left(\cdot\right)$.
\end{itemize}
\end{definition}

\begin{remark}\label{remark 3.1}{\rm(see \cite{WZ2023})}
\rm The condition $\mathscr{L}_{\infty}^{log}\left({\mathbb{R}}^n\right)$ is equivalent to the uniform continuity condition
\begin{align*}
|q\left(x\right)-q\left(y\right)|\leq \frac{C}{\ln\left(e+|x|\right)},|y|\geq |x|, x,y\in {\mathbb{R}}^n.
\end{align*}

In what follows, we denote
\begin{align*}
\mathscr{P}^{log}\left({\mathbb{R}}^n\right):=\mathscr{L}^{log}\left({\mathbb{R}}^n\right)\cap \mathscr{P}\left({\mathbb{R}}^n\right).
\end{align*}
\end{remark}

\begin{lemma}\label{Lemma3.1}{\rm(see \cite{CF2006})} 
\rm Let $p\left(\cdot\right)\in \mathscr{P}\left({\mathbb{R}}^n\right)$.
\begin{itemize}
\item [\rm (1)]If $p\left(\cdot\right)\in \mathscr{L}^{log}\left({\mathbb{R}}^n\right)$, then we have $p\left(\cdot\right)\in \mathscr{B}\left({\mathbb{R}}^n\right)$.
\item [\rm (2)]The following condition are equivalent:
\begin{itemize}
\item [\rm (i)]$p\left(\cdot\right)\in \mathscr{B}\left({\mathbb{R}}^n\right)$.
\item [\rm (ii)]$\left(p\left(\cdot\right)/p_0\right)'\in \mathscr{B}\left({\mathbb{R}}^n\right)$ for some $1<p_0<p_{-}$.
\end{itemize}
\end{itemize}
\end{lemma}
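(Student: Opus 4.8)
Both assertions are classical facts from the theory of variable Lebesgue spaces: part (1) is the Cruz-Uribe--Fiorenza--Neugebauer theorem (also due independently to Nekvinda) that log-H\"older continuity of the exponent forces the Hardy--Littlewood maximal operator to be bounded, and part (2) is Diening's structural characterization of the class $\mathscr{B}({\mathbb{R}}^n)$. The plan is to reduce to those sources; below I describe the shape of each argument.

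For part (1), I would first replace the norm inequality by the equivalent modular statement: it suffices to produce $C>0$ such that $F_p(f)\le1$ implies $\int_{{\mathbb{R}}^n}\bigl(Mf(x)/C\bigr)^{p(x)}\,dx\le1$. Then split $f=f\chi_{\{|f|>1\}}+f\chi_{\{|f|\le1\}}=:f_1+f_2$. Where $f_1\ne0$ one has $|f_1|\ge1$, so $|f_1|^{p_-}\le|f_1|^{p(\cdot)}$ and $f_1\in L^{p_-}({\mathbb{R}}^n)$ with norm $\le1$; since $p_->1$ by hypothesis, the classical $L^{p_-}$-boundedness of $M$ controls this piece. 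Dually, $|f_2|\le1$ gives $|f_2|^{p_+}\le|f_2|^{p(\cdot)}$, so $f_2\in L^{p_+}({\mathbb{R}}^n)$ and the $L^{p_+}$-boundedness of $M$ handles it. The only point at which the hypothesis $p(\cdot)\in\mathscr{L}^{log}({\mathbb{R}}^n)$ is genuinely used is the ``key averaging estimate'': for every cube $Q$, every $x\in Q$ and every $F\ge0$ with $\int_QF^{p(\cdot)}\le1$,
\begin{align*}
\Bigl(\frac{1}{|Q|}\int_QF\Bigr)^{p(x)}\le C\Bigl(\frac{1}{|Q|}\int_QF(y)^{p(y)}\,dy+(e+|x|)^{-np_-}+\frac{1}{|Q|}\int_Q(e+|y|)^{-np_-}\,dy\Bigr).
\end{align*}
The local part $\mathscr{L}^{log}_{loc}$ keeps $|Q|^{|p(x)-p(y)|}$ bounded on cubes of small side length, so $p(x)$ may be treated as constant across such $Q$; the part at infinity $\mathscr{L}^{log}_\infty$ produces, for large cubes, the tail $(e+|x|)^{-np_-}$, which is integrable precisely because $np_->n$. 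Taking the supremum over $Q\ni x$ and integrating, then combining with the $L^{p_-}$ and $L^{p_+}$ bounds, closes the modular estimate. I expect the derivation of this key averaging estimate to be the main technical step; it is carried out in \cite{CF2006}.

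For part (2) I would argue the two implications separately. The direction (ii)$\Rightarrow$(i) is soft. Put $q(\cdot):=p(\cdot)/p_0$; the condition $1<p_0<p_-$ gives $q_->1$, and then $q'\in\mathscr{B}({\mathbb{R}}^n)$ together with $1<q_-\le q_+<\infty$ yields $q\in\mathscr{B}({\mathbb{R}}^n)$ by the duality theorem for $\mathscr{B}$, that is, $M$ is bounded on $L^{q(\cdot)}$. Since $p_0\ge1$, Jensen's inequality gives $Mf\le\bigl(M(|f|^{p_0})\bigr)^{1/p_0}$ pointwise, whence
\begin{align*}
\|Mf\|_{L^{p(\cdot)}}=\bigl\|(Mf)^{p_0}\bigr\|_{L^{q(\cdot)}}^{1/p_0}\le\bigl\|M(|f|^{p_0})\bigr\|_{L^{q(\cdot)}}^{1/p_0}\le C\,\bigl\||f|^{p_0}\bigr\|_{L^{q(\cdot)}}^{1/p_0}=C\,\|f\|_{L^{p(\cdot)}},
\end{align*}
so $p(\cdot)\in\mathscr{B}({\mathbb{R}}^n)$. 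The converse (i)$\Rightarrow$(ii) is the substantial one and is Diening's ``left-openness'' theorem: boundedness of $M$ on $L^{p(\cdot)}$ is reformulated as a testing (``$\mathscr{A}$-type'') condition on $p(\cdot)$ over cubes, this condition is shown to self-improve (a reverse-H\"older phenomenon), and the improved condition is transferred to the conjugate exponent $(p(\cdot)/p_0)'$ for $1<p_0<p_-$. The self-improvement step is where the real work lies; since it is established in \cite{CF2006}, the cleanest course for the present paper is to invoke it directly.
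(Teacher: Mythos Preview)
Your proposal is correct in substance: the paper does not supply a proof of this lemma at all but simply cites it from \cite{CF2006}, so there is nothing to compare against beyond the citation. Your sketch of the Cruz-Uribe--Fiorenza--Neugebauer argument for part (1) and Diening's self-improvement/duality argument for part (2) accurately reflects the content of the cited source, and in fact goes well beyond what the paper itself records.
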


\begin{lemma}\label{Lemma3.2}{\rm(see \cite{LZ2014})} 
\rm Let $q\left(\cdot\right)$, $q_{1}\left(\cdot\right),\ldots,q_{m}\left(\cdot\right)\in\mathscr{P}\left({\mathbb{R}}^n\right)$ satisfy the condition
\begin{align*}
\frac{1}{q\left(x\right)}=\frac{1}{q_1\left(x\right)}+\cdots+\frac{1}{q_m\left(x\right)},~for~a.e~x\in {\mathbb{R}}^n.
\end{align*}
Then, for any $f_j\in L^{q_j\left(\cdot\right)}\left({\mathbb{R}}^n\right)$, $j=1,\ldots,m$, one has
\begin{align*}
\|f_1\cdots f_m\|_{q\left(\cdot\right)}\leq C\|f_1\|_{q_1\left(\cdot\right)}\cdots\|f_m\|_{q_m\left(\cdot\right)}.
\end{align*}
\end{lemma}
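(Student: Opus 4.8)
The plan is to reduce the statement to the bilinear case $m=2$ and then run a short induction on the number of factors. For $m=2$ I would exploit the homogeneity of the Luxemburg norm: discarding the trivial cases where one of the norms vanishes, normalize so that $\|f_1\|_{q_1(\cdot)}=\|f_2\|_{q_2(\cdot)}=1$, which forces $\int_{\mathbb{R}^n}|f_j(x)|^{q_j(x)}\,dx\le1$ for $j=1,2$, and then it suffices to bound $\|f_1f_2\|_{q(\cdot)}$ by an absolute constant. The pointwise engine is Young's inequality applied with the conjugate pair $\bigl(q_1(x)/q(x),\,q_2(x)/q(x)\bigr)$, which is admissible because $\tfrac{q(x)}{q_1(x)}+\tfrac{q(x)}{q_2(x)}=1$ with both quotients in $(0,1]$; writing $|f_1(x)f_2(x)|^{q(x)}=\bigl(|f_1(x)|^{q_1(x)}\bigr)^{q(x)/q_1(x)}\bigl(|f_2(x)|^{q_2(x)}\bigr)^{q(x)/q_2(x)}$ it yields $|f_1(x)f_2(x)|^{q(x)}\le|f_1(x)|^{q_1(x)}+|f_2(x)|^{q_2(x)}$ for a.e.\ $x$. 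Integrating gives $\int_{\mathbb{R}^n}|f_1f_2|^{q(x)}\,dx\le2$, and since $q_-\ge1$ one has $\eta^{q(x)}\ge\eta^{q_-}$ for $\eta\ge1$, so the choice $\eta=2^{1/q_-}$ yields $\int_{\mathbb{R}^n}(|f_1f_2|/\eta)^{q(x)}\,dx\le1$, i.e.\ $\|f_1f_2\|_{q(\cdot)}\le2^{1/q_-}$; undoing the normalization gives the bilinear estimate with constant $C=2^{1/q_-}$.

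For the inductive step I would introduce the exponent $r(\cdot)$ defined by $\tfrac1{r(x)}:=\sum_{j=1}^{m-1}\tfrac1{q_j(x)}$ and first check $r(\cdot)\in\mathscr{P}(\mathbb{R}^n)$: from $\tfrac1{r(x)}\le\tfrac1{q(x)}\le\tfrac1{q_-}<1$ one gets $r_-\ge q_->1$, and from $\tfrac1{r(x)}\ge\tfrac1{q_1(x)}\ge\tfrac1{(q_1)_+}$ one gets $r_+\le(q_1)_+<\infty$. Since $\tfrac1{q(x)}=\tfrac1{r(x)}+\tfrac1{q_m(x)}$, applying the bilinear case to the pair $\bigl(f_1\cdots f_{m-1},\,f_m\bigr)$ and then the induction hypothesis to $f_1\cdots f_{m-1}\in L^{r(\cdot)}(\mathbb{R}^n)$ gives $\|f_1\cdots f_m\|_{q(\cdot)}\le C\|f_1\cdots f_{m-1}\|_{r(\cdot)}\|f_m\|_{q_m(\cdot)}\le C^{m-1}\prod_{j=1}^m\|f_j\|_{q_j(\cdot)}$, which is the claim.

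I do not expect a genuine obstacle: this is the classical generalized H\"older inequality on variable Lebesgue spaces. The only two points that need care, both absent in the constant-exponent setting, are that the modular $f\mapsto\int_{\mathbb{R}^n}|f|^{q(x)}\,dx$ is not positively homogeneous, so the passage from a modular bound to a norm bound must absorb the numerical constant through the exponent $q_-$ (or $q_+$ when the relevant scalar is below $1$), and that one must verify that the intermediate exponent $r(\cdot)$ produced by the induction still lies in $\mathscr{P}(\mathbb{R}^n)$; both are handled above. One could alternatively deduce the result from the associate-space characterization of $L^{q(\cdot)}$, but the direct Young-inequality argument is shorter and fully self-contained.
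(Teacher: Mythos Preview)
Your argument is correct and is the standard proof of the generalized H\"older inequality in variable Lebesgue spaces. The paper does not actually prove Lemma~\ref{Lemma3.2}; it is quoted from \cite{LZ2014} as a known result, so there is no in-paper proof to compare against. Your reduction to the bilinear case via Young's inequality (equivalently, weighted AM--GM with weights $q(x)/q_1(x)$ and $q(x)/q_2(x)$), followed by the modular-to-norm step using $q_-\ge1$, and then induction with the intermediate exponent $r(\cdot)$, is exactly how this lemma is established in the literature (see, e.g., \cite{CF2013} or \cite{DH2011}). The two care points you flag---the lack of positive homogeneity of the modular and the verification that $r(\cdot)\in\mathscr{P}(\mathbb{R}^n)$---are precisely the ones that need attention, and you handle both cleanly.
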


\begin{lemma}\label{Lemma3.3}{\rm(see \cite{DH2011})} 
\rm Let $\mathcal{F}$ denote a family of ordered pairs of measurable functions $\left(f,g\right)$. suppose that for some fixed $q_0$ with $0<q_0<\infty$ and every weight $\omega\in A_1$ such that
\begin{align*}
\int_{{\mathbb{R}}^n}|f\left(x\right)|^{q_0}\omega\left(x\right)dx\leq C_0\int_{{\mathbb{R}}^n}|g\left(x\right)|^{q_0}\omega\left(x\right)dx.
\end{align*}
Let $q\left(\cdot\right)\in \mathscr{P}\left({\mathbb{R}}^n\right)$ with $q_0\leq q_{-}$. If $\left(q\left(\cdot\right)/q_0\right)'\in \mathscr{B}\left({\mathbb{R}}^n\right)$, then there exists a positive constant $C$, such that for all $\left(f,g\right)\in \mathcal{F}$,
\begin{align*}
\|f\|_{q\left(\cdot\right)}\leq C\|g\|_{q\left(\cdot\right)}.
\end{align*}
\end{lemma}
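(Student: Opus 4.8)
This lemma is the Rubio de Francia extrapolation theorem adapted to variable exponent Lebesgue spaces, and the plan is to run that scheme. First I would normalize away $q_0$: set $p(\cdot):=q(\cdot)/q_0$ and replace $\mathcal{F}$ by $\{(|f|^{q_0},|g|^{q_0}):(f,g)\in\mathcal{F}\}$. Since $\bigl\||h|^{q_0}\bigr\|_{p(\cdot)}=\|h\|_{q(\cdot)}^{q_0}$, proving $\|f\|_{q(\cdot)}\le C\|g\|_{q(\cdot)}$ is equivalent to proving $\|F\|_{p(\cdot)}\le C^{q_0}\|G\|_{p(\cdot)}$ for every normalized pair $(F,G)$, while the hypothesis becomes the \emph{linear} weighted bound $\int_{\mathbb{R}^n}|F|\omega\,dx\le C_0\int_{\mathbb{R}^n}|G|\omega\,dx$ for all $\omega\in A_1$. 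The assumption $(q(\cdot)/q_0)'=p'(\cdot)\in\mathscr{B}(\mathbb{R}^n)\subset\mathscr{P}(\mathbb{R}^n)$ forces $p(\cdot)\in\mathscr{P}(\mathbb{R}^n)$ (in particular $p_->1$) and, by definition of $\mathscr{B}(\mathbb{R}^n)$, makes the Hardy--Littlewood maximal operator $M$ bounded on $L^{p'(\cdot)}(\mathbb{R}^n)$; write $\|M\|:=\|M\|_{L^{p'(\cdot)}\to L^{p'(\cdot)}}$ and set $B:=2\max\{1,\|M\|\}$.

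Fix a normalized pair $(F,G)$. Using the norm conjugate (associate space) formula for $L^{p(\cdot)}(\mathbb{R}^n)$, available because $p(\cdot)\in\mathscr{P}(\mathbb{R}^n)$, I would write
\[
\|F\|_{p(\cdot)}\le 2\sup\Bigl\{\int_{\mathbb{R}^n}|F(x)|h(x)\,dx:\ h\ge 0,\ \|h\|_{p'(\cdot)}\le 1\Bigr\}
\]
and fix an admissible $h$. The key device is the Rubio de Francia operator
\[
\mathcal{R}h:=\sum_{k=0}^{\infty}\frac{M^kh}{B^k}\qquad(M^0h:=h),
\]
for which three properties are checked directly: $h\le\mathcal{R}h$ pointwise (the $k=0$ term); $\|\mathcal{R}h\|_{p'(\cdot)}\le\sum_{k\ge 0}\|M\|^kB^{-k}\|h\|_{p'(\cdot)}\le 2$, so $\mathcal{R}h$ is finite almost everywhere; and, by sublinearity of $M$, $M(\mathcal{R}h)\le\sum_{k\ge 0}M^{k+1}h\,B^{-k}=B\,\mathcal{R}h$, i.e. $\mathcal{R}h\in A_1$ with $[\mathcal{R}h]_{A_1}\le B$.

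The proof then closes by chaining these facts: for any admissible $h$,
\[
\int_{\mathbb{R}^n}|F|h\le\int_{\mathbb{R}^n}|F|\,\mathcal{R}h\le C_0\int_{\mathbb{R}^n}|G|\,\mathcal{R}h\le C\,\|G\|_{p(\cdot)}\,\|\mathcal{R}h\|_{p'(\cdot)}\le C\,\|G\|_{p(\cdot)},
\]
where the first inequality is $h\le\mathcal{R}h$, the second is the hypothesis applied to the $A_1$ weight $\mathcal{R}h$ (whose $A_1$ constant is $\le B$, so $C_0$ stays uniform), and the third is Hölder's inequality for variable exponent Lebesgue spaces together with $\|\mathcal{R}h\|_{p'(\cdot)}\le 2$. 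Taking the supremum over $h$ gives $\|F\|_{p(\cdot)}\le 2C\|G\|_{p(\cdot)}$, and undoing the normalization yields $\|f\|_{q(\cdot)}^{q_0}\le 2C\|g\|_{q(\cdot)}^{q_0}$, hence the claim with constant $(2C)^{1/q_0}$. If $\|G\|_{p(\cdot)}=\infty$ there is nothing to prove, and otherwise the displayed chain also shows a posteriori that $\|F\|_{p(\cdot)}<\infty$, so no separate truncation step is required.

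The only place where genuine input enters is the $A_1$ bound for $\mathcal{R}h$: the whole mechanism hinges on $M$ being bounded on $L^{p'(\cdot)}(\mathbb{R}^n)$, which is exactly what the hypothesis $(q(\cdot)/q_0)'\in\mathscr{B}(\mathbb{R}^n)$ supplies. The remaining ingredients — the norm conjugate formula and Hölder's inequality in $L^{p(\cdot)}$--$L^{p'(\cdot)}$ — are classical structural facts about variable Lebesgue spaces with exponent in $\mathscr{P}(\mathbb{R}^n)$ (cf.\ \cite{CF2006}), so beyond the bookkeeping of the normalization the argument is essentially the single nontrivial estimate contained in the three bullet properties of $\mathcal{R}h$.
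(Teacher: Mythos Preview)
The paper does not prove Lemma~\ref{Lemma3.3}; it merely quotes it from \cite{DH2011} (see also \cite{CF2006}) as a known extrapolation result, so there is no in-paper proof to compare against. Your argument is the standard Rubio de Francia extrapolation scheme adapted to $L^{q(\cdot)}$: reduce to $q_0=1$ via $p(\cdot)=q(\cdot)/q_0$, dualize into $L^{p'(\cdot)}$, build the $A_1$ weight $\mathcal{R}h=\sum_{k\ge 0}M^kh/B^k$ using the assumed boundedness of $M$ on $L^{p'(\cdot)}$, apply the weighted hypothesis, and close with H\"older. This is exactly the proof given in the cited references, and your write-up is correct.
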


\begin{lemma}\label{Lemma3.4}{\rm(see \cite{CCF2007})} 
\rm Let $0<\alpha<n$ and $p\left(\cdot\right)\in\mathscr{P}^{log}\left({\mathbb{R}}^n\right)$ with $p_{+}<\frac{n}{\alpha} $ and $\frac{1}{q\left(\cdot\right)}=\frac{1}{p\left(\cdot\right)}-\frac{\alpha}{n}$, then
\begin{align*}
\|\mathscr{M}_{\alpha}f\|_{q\left(\cdot\right)}\leq C\|f\|_{p\left(\cdot\right)}.
\end{align*}
\end{lemma}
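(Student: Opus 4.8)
\medskip
\noindent\textbf{Proof sketch.} Lemma~\ref{Lemma3.4} is the boundedness of the single--parameter fractional maximal operator on variable exponent Lebesgue spaces, and the plan is to reduce it to tools already in hand: the $L^{p(\cdot)}$--boundedness of the Hardy--Littlewood maximal operator $M$ (Lemma~\ref{Lemma3.1}(1), valid since $p(\cdot)\in\mathscr{P}^{log}({\mathbb{R}}^n)\subset\mathscr{B}({\mathbb{R}}^n)$), the generalised H\"older inequality (Lemma~\ref{Lemma3.2}), and the log--H\"older continuity of $p(\cdot)$. First I would use homogeneity to reduce to the case $f\ge 0$, $\|f\|_{p(\cdot)}\le 1$. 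Since $1/q(x)=1/p(x)-\alpha/n$ with $p_+<n/\alpha$, one checks that $q(\cdot)\in\mathscr{P}^{log}({\mathbb{R}}^n)$ as well, with $q_-=(1/p_--\alpha/n)^{-1}>1$ because $p_->1$; hence it suffices to prove the modular bound $\int_{{\mathbb{R}}^n}\mathscr{M}_\alpha f(x)^{q(x)}\,dx\le C$ and then rescale. One shortcut here is the pointwise domination $\mathscr{M}_\alpha f(x)\le C_n I_\alpha(|f|)(x)$, valid because $|x-y|\le\sqrt n\,\ell(Q)$ for $y\in Q\ni x$; this passes everything to the Riesz--potential version. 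I would instead argue directly, the mechanism being the same.

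\medskip
The heart of the argument is a Hedberg--type pointwise inequality: under $\|f\|_{p(\cdot)}\le 1$,
\begin{align*}
\mathscr{M}_\alpha f(x)\le C\big(Mf(x)\big)^{p(x)/q(x)}+G(x),
\end{align*}
where $G\ge 0$ satisfies $\|G\|_{q(\cdot)}\le C$. To obtain it, fix $x$ and a cube $Q\ni x$ of side $t$, and estimate $|Q|^{\frac{\alpha}{n}-1}\int_Q f$ in two ways: trivially by $t^{\alpha}Mf(x)$, and, writing $\int_Q f=\int f\cdot\chi_Q$ and applying Lemma~\ref{Lemma3.2} with exponents $p(\cdot),p'(\cdot)$ together with $\|f\|_{p(\cdot)}\le 1$, by $C\,t^{\alpha-n}\|\chi_Q\|_{p'(\cdot)}$. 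The local log--H\"older condition gives $\|\chi_Q\|_{p'(\cdot)}\le C\,t^{n/p'(x)}$ for $t\le 1$, so that $|Q|^{\frac{\alpha}{n}-1}\int_Q f\le C\,t^{-n/q(x)}$; taking the smaller of the two bounds and optimising $\min\{t^{\alpha}Mf(x),\,C\,t^{-n/q(x)}\}$ over $t$ — the crossing scale is $t_0\sim Mf(x)^{-p(x)/n}$, using $\alpha+n/q(x)=n/p(x)$ — produces the term $C(Mf(x))^{p(x)/q(x)}$ from cubes of side $\le 1$. The cubes of side $>1$ produce $G$; the decay needed to make $G\in L^{q(\cdot)}$ does not come from $\|f\chi_Q\|_{p(\cdot)}\le 1$ alone but from the global constraint $\int_{{\mathbb{R}}^n}|f|^{p(x)}\,dx\le 1$, which I would exploit via the splitting $f=f\chi_{\{f>1\}}+f\chi_{\{f\le 1\}}$ (whose summands lie in $L^{p_-}$ and in $L^{p_+}$ with norm $\le 1$), the classical constant--exponent Hedberg inequalities, the condition $\mathscr{L}_\infty^{log}$ applied to the large cubes, and the relations $\alpha+n/q_\infty=n/p_\infty$ and $q_->n/(n-\alpha)$ (again because $p_->1$), the latter guaranteeing $\int_{{\mathbb{R}}^n}(1+|x|)^{(\alpha-n)q(x)}\,dx<\infty$.

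\medskip
Granting the pointwise inequality, raise it to the power $q(x)$ and integrate:
\begin{align*}
\int_{{\mathbb{R}}^n}\mathscr{M}_\alpha f(x)^{q(x)}\,dx\le C\int_{{\mathbb{R}}^n}\big(Mf(x)\big)^{p(x)}\,dx+C\int_{{\mathbb{R}}^n}G(x)^{q(x)}\,dx.
\end{align*}
For the first integral, Lemma~\ref{Lemma3.1}(1) gives $\|Mf\|_{p(\cdot)}\le C\|f\|_{p(\cdot)}\le C$, whence $\int_{{\mathbb{R}}^n}(Mf)^{p(x)}\le C$ by the unit--ball property of the modular; the second is $\le C$ by the construction of $G$. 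Thus $\int_{{\mathbb{R}}^n}(\mathscr{M}_\alpha f)^{q(x)}\le C$, and rescaling yields $\|\mathscr{M}_\alpha f\|_{q(\cdot)}\le C$. I expect the pointwise Hedberg estimate to be the main obstacle, and within it the honest bookkeeping that produces an \emph{integrable} tail $G$: one must convert the global bound $\int|f|^{p(x)}\le 1$ (rather than the weaker local one) and the log--H\"older behaviour at infinity into genuine decay, which is precisely where the hypotheses $p_+<n/\alpha$ and $p(\cdot)\in\mathscr{L}_\infty^{log}({\mathbb{R}}^n)$ are consumed. An alternative would be off--diagonal Rubio de Francia extrapolation, but the diagonal version recorded in Lemma~\ref{Lemma3.3} does not apply directly to $\mathscr{M}_\alpha$ because of the gain in integrability, which is why I prefer the direct route.
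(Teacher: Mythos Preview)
The paper does not prove Lemma~\ref{Lemma3.4}; it is quoted verbatim from \cite{CCF2007} and used as a black box in the proofs of Theorems~\ref{Theorem 3.1} and~\ref{Theorem 3.2}. So there is no ``paper's own proof'' to compare against.

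That said, your sketch is a faithful outline of the argument in \cite{CCF2007}: the reduction to a modular estimate via $\|f\|_{p(\cdot)}\le 1$, the variable-exponent Hedberg pointwise bound $\mathscr{M}_\alpha f(x)\le C\,(Mf(x))^{p(x)/q(x)}+G(x)$ obtained by balancing $t^{\alpha}Mf(x)$ against a H\"older/$\chi_Q$ bound, and the separate treatment of small and large cubes using the local and infinite log-H\"older conditions respectively. Your identification of the main technical obstacle---controlling the tail $G$ via the global modular bound and $\mathscr{L}^{log}_\infty$---is correct, and the $L^{p_-}/L^{p_+}$ splitting you propose is exactly how \cite{CCF2007} (and the related Diening approach) handles it. One small caveat: the optimisation over $t$ for small cubes and the tail estimate for large cubes have to be spliced together carefully, since the crossover scale $t_0\sim Mf(x)^{-p(x)/n}$ may exceed $1$ when $Mf(x)$ is small; in practice this is absorbed into the $G$ term, but it is worth flagging if you write out the details. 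Your remark that the diagonal extrapolation of Lemma~\ref{Lemma3.3} does not directly apply is also accurate; one would need an off-diagonal version.
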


\begin{theorem}\label{Theorem 3.1}
\rm Let $m\geq2$, $T_{\alpha}$ be an $m$-linear fractional integral operator with generalized kernel as Definition \ref{definition1.2} and $\sum_{k=1}^{\infty}C_k<\infty$. Given $\frac{p_i}{p_0'}\left(\cdot\right)\in \mathscr{P}^{log}\left({\mathbb{R}}^n\right)$, $0<\alpha_i<\frac{n}{p_{i,+}}$, $i=1,2,\ldots,m$, $\alpha=\alpha_1+\cdots+\alpha_m$, $\frac{1}{p\left(\cdot\right)}=\frac{1}{p_1\left(\cdot\right)}+\cdots+\frac{1}{p_m\left(\cdot\right)}$, $\frac{1}{q\left(\cdot\right)}=\frac{1}{p\left(\cdot\right)}-\frac{\alpha}{n}$ and $q\left(\cdot\right)\in\mathscr{B}\left({\mathbb{R}}^n\right)$, then there exists a positive constant $C$ such that 
\begin{align*}
\left\|T_{\alpha}(\vec f)\right\|_{L^{q\left(\cdot\right)}\left({\mathbb{R}}^n\right)}\leq C\prod_{j=1}^m\left\|f_j\right\|_{L^{p_j\left(\cdot\right)}\left({\mathbb{R}}^n\right)}.
\end{align*}
\end{theorem}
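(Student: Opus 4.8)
The proof proceeds by combining the sharp maximal estimate of Theorem~\ref{Theorem 2.1} with a weighted–to–variable-exponent extrapolation (Lemma~\ref{Lemma3.3}), and then reducing the resulting control of the multilinear fractional maximal operator to the one-variable fractional maximal bound of Lemma~\ref{Lemma3.4} via the generalized H\"older inequality of Lemma~\ref{Lemma3.2}. Note first that the standing assumptions already force $0<\alpha p_0'<mn$: since $p_i(\cdot)/p_0'\in\mathscr{P}({\mathbb{R}}^n)$ we have $p_{i,+}\ge p_0'$, hence $\alpha_i<n/p_{i,+}\le n/p_0'$ and $\alpha=\sum_{i=1}^m\alpha_i<mn/p_0'$; therefore Theorem~\ref{Theorem 2.1} is available under the hypothesis $\sum_k C_k<\infty$.

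First I would fix $\delta$ with $0<\delta<\min\{1,\frac{sn}{n-s\alpha}\}$ and record that for \emph{every} $\omega\in A_1\subset A_\infty$ and every $q_0>0$,
\[
\int_{{\mathbb{R}}^n}|T_{\alpha}(\vec f)|^{q_0}\omega\,dx\le\int_{{\mathbb{R}}^n}\big(M_{\delta}(T_{\alpha}(\vec f))\big)^{q_0}\omega\,dx\le C\int_{{\mathbb{R}}^n}\big(M^{\#}_{\delta}(T_{\alpha}(\vec f))\big)^{q_0}\omega\,dx\le C\int_{{\mathbb{R}}^n}\big(\mathcal{M}_{\alpha,p_0'}(\vec f)\big)^{q_0}\omega\,dx,
\]
using $|g|\le M_\delta g$ a.e., Lemma~\ref{Lemma2.1}, and Theorem~\ref{Theorem 2.1}; as is customary this is first verified for $\vec f$ with bounded compactly supported components, so that the left side is finite, and the general case then follows by density. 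Since $q(\cdot)\in\mathscr{B}({\mathbb{R}}^n)\subset\mathscr{P}({\mathbb{R}}^n)$, Lemma~\ref{Lemma3.1}(2) supplies a number $q_0$ with $1<q_0<q_-$ such that $(q(\cdot)/q_0)'\in\mathscr{B}({\mathbb{R}}^n)$; applying Lemma~\ref{Lemma3.3} to the family $\{(T_{\alpha}(\vec f),\mathcal{M}_{\alpha,p_0'}(\vec f))\}$ with this $q_0$ then gives $\|T_{\alpha}(\vec f)\|_{L^{q(\cdot)}}\le C\|\mathcal{M}_{\alpha,p_0'}(\vec f)\|_{L^{q(\cdot)}}$.

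It then remains to bound $\|\mathcal{M}_{\alpha,p_0'}(\vec f)\|_{L^{q(\cdot)}}$. Writing $|Q|^{\alpha/n}=\prod_{j}|Q|^{\alpha_j/n}$ and using $\sup_Q\prod_j(\cdot)\le\prod_j\sup_Q(\cdot)$, one gets the pointwise bound
\[
\mathcal{M}_{\alpha,p_0'}(\vec f)(x)\le\prod_{j=1}^m\mathscr{M}_{\alpha_j,p_0'}(f_j)(x)=\prod_{j=1}^m\big[\mathscr{M}_{\alpha_j p_0'}\big(|f_j|^{p_0'}\big)(x)\big]^{1/p_0'}.
\]
Setting $\frac{1}{q_j(\cdot)}:=\frac{1}{p_j(\cdot)}-\frac{\alpha_j}{n}$ one checks $\frac{1}{q(\cdot)}=\sum_j\frac{1}{q_j(\cdot)}$ and $q_j(\cdot)\in\mathscr{P}({\mathbb{R}}^n)$, so Lemma~\ref{Lemma3.2} together with the homogeneity identity $\|h\|_{L^{r(\cdot)}}=\big\||h|^{p_0'}\big\|_{L^{r(\cdot)/p_0'}}^{1/p_0'}$ reduces the estimate to $\prod_j\big\|\mathscr{M}_{\alpha_j p_0'}(|f_j|^{p_0'})\big\|_{L^{q_j(\cdot)/p_0'}}^{1/p_0'}$. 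Finally I would invoke Lemma~\ref{Lemma3.4} on each factor with fractional order $\alpha_j p_0'$ and exponents $p_j(\cdot)/p_0'$, $q_j(\cdot)/p_0'$: its hypotheses hold because $p_j(\cdot)/p_0'\in\mathscr{P}^{log}({\mathbb{R}}^n)$ by assumption, the requirement $(p_j(\cdot)/p_0')_+<n/(\alpha_j p_0')$ is exactly $\alpha_j<n/p_{j,+}$, and $\frac{1}{q_j(\cdot)/p_0'}=\frac{1}{p_j(\cdot)/p_0'}-\frac{\alpha_j p_0'}{n}$ by construction. This produces $\big\|\mathscr{M}_{\alpha_j p_0'}(|f_j|^{p_0'})\big\|_{L^{q_j(\cdot)/p_0'}}\le C\big\||f_j|^{p_0'}\big\|_{L^{p_j(\cdot)/p_0'}}=C\|f_j\|_{L^{p_j(\cdot)}}^{p_0'}$, and chaining the three displays proves the theorem.

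The main obstacle is the extrapolation step: one must genuinely secure the weighted $L^{q_0}$ inequality for \emph{all} $A_1$ weights, which involves the finiteness caveat in Lemma~\ref{Lemma2.1} (handled by truncating $\vec f$ and using density of nice functions in the variable Lebesgue space) and, crucially, the selection of one $q_0$ that is simultaneously below $q_-$ and makes $(q(\cdot)/q_0)'\in\mathscr{B}({\mathbb{R}}^n)$ — precisely what Lemma~\ref{Lemma3.1}(2) guarantees. The only other point demanding care is the bookkeeping of the $p_0'$-dilations of the exponents, so that each use of Lemma~\ref{Lemma3.4} has its upper-exponent condition $(\,\cdot\,)_+<n/(\text{fractional order})$ met; the remaining manipulations are routine.
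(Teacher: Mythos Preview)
Your proposal is correct and follows essentially the same route as the paper: both arguments combine Theorem~\ref{Theorem 2.1} with the Fefferman--Stein inequality (Lemma~\ref{Lemma2.1}) to obtain a uniform weighted $L^{q_0}$ estimate, extrapolate via Lemma~\ref{Lemma3.3} using the exponent $q_0$ furnished by Lemma~\ref{Lemma3.1}(2), split $\mathcal{M}_{\alpha,p_0'}$ pointwise into a product of one-variable fractional maximal functions, and finish with Lemma~\ref{Lemma3.2} and Lemma~\ref{Lemma3.4} after the $p_0'$-dilation of exponents. Your write-up is in fact somewhat more careful than the paper's in explicitly verifying the range condition $0<\alpha p_0'<mn$ and the hypotheses of Lemma~\ref{Lemma3.4}.
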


\begin{proof}
Since $q\left(\cdot\right)\in\mathscr{B}\left({\mathbb{R}}^n\right)$, by Lemma \ref{Lemma3.1}, there exists a $q_0$ with $1<q_0<q_-$ such that $\left(q\left(\cdot\right)/q_0\right)'\in \mathscr{B}\left({\mathbb{R}}^n\right)$. For this $q_0$ and any $\omega\in A_1$, choosing $0<\delta<\min\left\{1,\frac{sn}{n-s\alpha}\right\}$, by Theorem \ref{Theorem 2.1}, we obtain
\begin{align*}
\left\|T_{\alpha}(\vec f)\right\|_{L^{q_0}\left(\omega\right)}\leq \left\|M_{\delta}(T_{\alpha}(\vec f))\right\|_{L^{q_0}\left(\omega\right)}\leq \left\|M_{\delta}^{\#}(T_{\alpha}(\vec f))\right\|_{L^{q_0}\left(\omega\right)}\leq C\left\|\mathcal{M}_{\alpha,p_0'}(\vec f)\right\|_{L^{q_0}\left(\omega\right)},
\end{align*}
then we can get
\begin{align*}
\int_{{\mathbb{R}}^n}\left|T_{\alpha}(\vec f)\left(x\right)\right|^{q_0}\omega\left(x\right)dx\leq C\int_{{\mathbb{R}}^n}\left|\mathcal{M}_{\alpha,p_0'}(\vec f)\left(x\right)\right|^{q_0}\omega\left(x\right)dx
\end{align*}
holds for all $m$-tuples $\vec f=\left(f_1,\ldots,f_m\right)$ of bounded functions with compact support.

  Apply Lemma \ref{Lemma3.3} to the pair $\left(T_{\alpha}(\vec f),M_{\alpha,p_0'}(\vec f)\right)$ and obtain
\begin{equation}\label{(3.1)}
\left\|T_{\alpha}(\vec f)\right\|_{q\left(\cdot\right)}\leq C\left\|\mathcal{M}_{\alpha,p_0'}(\vec f)\right\|_{q\left(\cdot\right)}.
\end{equation}
We can get $0<\alpha_i<\frac{n}{p_{i,+}}<\frac{n}{p_0'}$. Denote by $\frac{1}{q_i\left(\cdot\right)}=\frac{1}{p_i\left(\cdot\right)}-\frac{\alpha_i}{n}\left(i=1,\ldots,m\right)$, then $\frac{1}{q\left(\cdot\right)}=\frac{1}{q_1\left(\cdot\right)}+\cdots+\frac{1}{q_m\left(\cdot\right)}$. By Definition \ref{definition2.3}, it is easy to see that
\begin{equation}\label{(3.2)}
\mathcal{M}_{\alpha,p_0'}(\vec f)\left(x\right)\leq\prod_{i=1}^m\mathscr{M}_{\alpha_i,p_0'}\left(f_i\right)\left(x\right)~{\rm for}~x\in{\mathbb{R}}^n.
\end{equation}
Let $\beta_i=\alpha_i p_0'$. From \eqref{(3.1)}, \eqref{(3.2)}, Lemma \ref{Lemma3.2}, and Lemma \ref{Lemma3.4}, it follows that
\begin{align*}
\left\|T_{\alpha}(\vec f)(x)\right\|_{q(\cdot)}\leq &C\left\|\mathcal{M}_{\alpha,p_0'}(\vec f)\right\|_{q(\cdot)}\leq C\prod_{i=1}^m\left\|\mathscr{M}_{\alpha_i,p_0'}(f_i)\right\|_{q_i(\cdot)}\\
=&C\prod_{i=1}^m\left\|\left[\mathscr{M}_{{\beta}_i}(f_i^{p_0'})\right]^{\frac{1}{p_0'}}\right\|_{q_i(\cdot)}=C\prod_{i=1}^m\left\|\mathscr{M}_{{\beta}_i}(f_i^{p_0'})\right\|_{\frac{q_i}{p_0'}(\cdot)}^{\frac{1}{p_0'}}\\
\leq &C\prod_{i=1}^m\left\||f_i|^{p_0'}\right\|_{\frac{p_i}{p_0'}(\cdot)}^{\frac{1}{p_0'}}=C\prod_{i=1}^m\left\|f_i\right\|_{p_i(\cdot)}.
\end{align*}
Thus, Theorem \ref{Theorem 3.1} is proved.
\end{proof}

\begin{remark}\label{remark 3.2}
\rm  When the generalized kernel condition goes back to the Dini kernel condition, the boundedness of $m$-linear fractional integral operators with Dini's type kernels on variable exponent Lebesgue spaces has been demonstrated by Wu and Zhang in \cite{WZ2023}.
\end{remark}

\begin{theorem}\label{Theorem 3.2}
\rm Let $m\geq2$, $T_{\alpha}$ be an $m$-linear fractional integral operator with generalized kernel as Definition \ref{definition1.2} and $\sum_{k=1}^{\infty}kC_k<\infty$. Given $\frac{p_i}{p_0'}\left(\cdot\right)\in \mathscr{P}^{log}\left({\mathbb{R}}^n\right)$, $0<\alpha_i<\frac{n}{p_{i,+}}$, $i=1,2,\ldots,m$, $\alpha=\alpha_1+\cdots+\alpha_m$, $\frac{1}{p\left(\cdot\right)}=\frac{1}{p_1\left(\cdot\right)}+\cdots+\frac{1}{p_m\left(\cdot\right)}$, $\frac{1}{q\left(\cdot\right)}=\frac{1}{p\left(\cdot\right)}-\frac{\alpha}{n}$, $q\left(\cdot\right)\in\mathscr{B}\left({\mathbb{R}}^n\right)$ and $\vec b\in {\rm BMO}^m$, then there exists a positive constant $C$ such that 
\begin{align*}
\left\|T_{\vec b,\alpha}(\vec f)\right\|_{L^{q\left(\cdot\right)}\left({\mathbb{R}}^n\right)}\leq C\big\|\vec b\big\|_{\left({\rm BMO}\right)^m}\prod_{j=1}^m\|f_j\|_{L^{p_j\left(\cdot\right)}\left({\mathbb{R}}^n\right)}.
\end{align*}
\end{theorem}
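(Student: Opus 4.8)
The plan is to follow the scheme used to prove Theorem~\ref{Theorem 3.1}, replacing the sharp maximal estimate of Theorem~\ref{Theorem 2.1} by that of Theorem~\ref{Theorem2.2} and absorbing the extra term $M_\varepsilon(T_\alpha(\vec f))$ occurring there by a second use of the Fefferman--Stein inequality (Lemma~\ref{Lemma2.1}), exactly as was done in the proof of Theorem~\ref{Theorem2.4}. First I would fix the parameters. Since $\frac{p_i}{p_0'}(\cdot)\in\mathscr P^{log}(\mathbb R^n)$ we have $(p_i/p_0')_->1$, hence $p_{i,-}>p_0'$ for every $i=1,\ldots,m$, so there is a constant $t$ with $p_0'<t<\min_{1\le i\le m}p_{i,-}$. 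Because $t<p_{i,-}\le p_{i,+}$, the hypothesis $0<\alpha_i<n/p_{i,+}$ gives $0<\alpha_i<n/t$, whence $0<\alpha t=\sum_{i=1}^m\alpha_i t<mn$, so we are in the setting of Theorem~\ref{Theorem2.2}; moreover $\frac{p_i}{t}(\cdot)$ is a constant multiple of the global $\log$-H\"older continuous exponent $\frac{p_i}{p_0'}(\cdot)$ with $(p_i/t)_-=p_{i,-}/t>1$, so $\frac{p_i}{t}(\cdot)\in\mathscr P^{log}(\mathbb R^n)$. Since $q(\cdot)\in\mathscr B(\mathbb R^n)$, Lemma~\ref{Lemma3.1} provides $q_0$ with $1<q_0<q_-$ and $(q(\cdot)/q_0)'\in\mathscr B(\mathbb R^n)$, and finally I would pick $\delta,\varepsilon$ with $0<\delta<\varepsilon<\min\{1,\frac{sn}{n-s\alpha}\}$.

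Next I would establish a weighted $L^{q_0}$ inequality. Fix $\omega\in A_1\subset A_\infty$. By Lemma~\ref{Lemma2.1} and Theorem~\ref{Theorem2.2},
\[
\|T_{\vec b,\alpha}(\vec f)\|_{L^{q_0}(\omega)}\le\|M_\delta(T_{\vec b,\alpha}(\vec f))\|_{L^{q_0}(\omega)}\le C\|M_\delta^{\#}(T_{\vec b,\alpha}(\vec f))\|_{L^{q_0}(\omega)}\le C\|\vec b\|_{({\rm BMO})^m}\left(\|M_\varepsilon(T_\alpha(\vec f))\|_{L^{q_0}(\omega)}+\|\mathcal M_{\alpha,t}(\vec f)\|_{L^{q_0}(\omega)}\right).
\]
Applying Lemma~\ref{Lemma2.1} once more, now to $T_\alpha(\vec f)$ with the exponent $\varepsilon$, then Theorem~\ref{Theorem 2.1}, and using $\mathcal M_{\alpha,p_0'}(\vec f)\le\mathcal M_{\alpha,t}(\vec f)$ (valid because $p_0'<t$), the first summand is also bounded by $C\|\mathcal M_{\alpha,t}(\vec f)\|_{L^{q_0}(\omega)}$. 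Hence
\[
\int_{\mathbb R^n}|T_{\vec b,\alpha}(\vec f)(x)|^{q_0}\omega(x)\,dx\le C\|\vec b\|_{({\rm BMO})^m}^{q_0}\int_{\mathbb R^n}|\mathcal M_{\alpha,t}(\vec f)(x)|^{q_0}\omega(x)\,dx
\]
holds for all $m$-tuples $\vec f$ of bounded functions with compact support; for such $\vec f$ the left-hand sides above are finite, which is precisely what Lemma~\ref{Lemma2.1} requires.

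Then I would apply the extrapolation Lemma~\ref{Lemma3.3}, with this $q_0$, to the family of pairs $(T_{\vec b,\alpha}(\vec f),\mathcal M_{\alpha,t}(\vec f))$; since $q_0\le q_-$ and $(q(\cdot)/q_0)'\in\mathscr B(\mathbb R^n)$, it yields $\|T_{\vec b,\alpha}(\vec f)\|_{q(\cdot)}\le C\|\vec b\|_{({\rm BMO})^m}\|\mathcal M_{\alpha,t}(\vec f)\|_{q(\cdot)}$. To finish I would repeat the fractional-maximal splitting from the proof of Theorem~\ref{Theorem 3.1}: put $\frac1{q_i(\cdot)}=\frac1{p_i(\cdot)}-\frac{\alpha_i}{n}$, so that $\frac1{q(\cdot)}=\sum_{i=1}^m\frac1{q_i(\cdot)}$, use the pointwise bound $\mathcal M_{\alpha,t}(\vec f)(x)\le\prod_{i=1}^m\mathscr M_{\alpha_i,t}(f_i)(x)$ together with Lemma~\ref{Lemma3.2}, write $\beta_i=\alpha_i t$ so that $\mathscr M_{\alpha_i,t}(f_i)=[\mathscr M_{\beta_i}(f_i^{t})]^{1/t}$, and invoke Lemma~\ref{Lemma3.4} --- whose hypothesis $(p_i/t)_+=p_{i,+}/t<n/\beta_i$ is exactly $\alpha_i<n/p_{i,+}$ --- to get
\[
\|\mathscr M_{\alpha_i,t}(f_i)\|_{q_i(\cdot)}=\|\mathscr M_{\beta_i}(f_i^{t})\|_{\frac{q_i}{t}(\cdot)}^{1/t}\le C\||f_i|^{t}\|_{\frac{p_i}{t}(\cdot)}^{1/t}=C\|f_i\|_{p_i(\cdot)}.
\]
Combining the last two facts gives the asserted estimate.

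The two uses of Lemma~\ref{Lemma2.1} and the maximal-function splitting are routine, being copies of arguments already carried out in the proofs of Theorems~\ref{Theorem2.4} and~\ref{Theorem 3.1}. The point that is genuinely new compared with Theorem~\ref{Theorem 3.1}, and that I expect to be the main (though not serious) obstacle, is the term $M_\varepsilon(T_\alpha(\vec f))$ in the sharp maximal bound of Theorem~\ref{Theorem2.2}: absorbing it forces the auxiliary exponent $t>p_0'$, the relation $\delta<\varepsilon$ and the compatibility $0<\alpha t<mn$, and one must check that these can be met simultaneously with $\frac{p_i}{t}(\cdot)\in\mathscr P^{log}(\mathbb R^n)$ and $(p_i/t)_+<n/\beta_i$ --- which is exactly what the choice $p_0'<t<\min_i p_{i,-}$ in the first step ensures.
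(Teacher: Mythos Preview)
Your proposal is correct and follows essentially the same approach as the paper's own proof: both fix $p_0'<t<\min_i p_{i,-}$ and $0<\delta<\varepsilon<\min\{1,\frac{sn}{n-s\alpha}\}$, derive the weighted $L^{q_0}(\omega)$ bound $\|T_{\vec b,\alpha}(\vec f)\|_{L^{q_0}(\omega)}\le C\|\vec b\|_{({\rm BMO})^m}\|\mathcal M_{\alpha,t}(\vec f)\|_{L^{q_0}(\omega)}$ via Lemma~\ref{Lemma2.1}, Theorem~\ref{Theorem2.2}, a second application of Lemma~\ref{Lemma2.1} with Theorem~\ref{Theorem 2.1}, and $\mathcal M_{\alpha,p_0'}\le\mathcal M_{\alpha,t}$, then extrapolate with Lemma~\ref{Lemma3.3} and finish with the splitting $\mathcal M_{\alpha,t}(\vec f)\le\prod_i\mathscr M_{\alpha_i,t}(f_i)$, Lemma~\ref{Lemma3.2} and Lemma~\ref{Lemma3.4}. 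Your write-up is in fact somewhat more careful than the paper's in verifying the auxiliary conditions ($0<\alpha t<mn$, $\frac{p_i}{t}(\cdot)\in\mathscr P^{log}$, and $(p_i/t)_+<n/\beta_i$).
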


\begin{proof}
Since $q\left(\cdot\right)\in\mathscr{B}\left({\mathbb{R}}^n\right)$, by Lemma \ref{Lemma3.1} there exists a $q_0$ with $1<q_0<q_-$ such that $\left(q\left(\cdot\right)/q_0\right)'\in \mathscr{B}\left({\mathbb{R}}^n\right)$.   For this $q_0$ and any $\omega\in A_1$, choosing $0<\delta<\varepsilon<\min\left\{1,\frac{sn}{n-s\alpha}\right\}$ and $p_0'<t<\min\limits_{1\leq i\leq m}p_{i,-}$, by Theorem \ref{Theorem2.2}, we obtain
\begin{align*}
\left\|T_{\vec b,\alpha}(\vec f)\right\|_{L^{q_0}\left(\omega\right)}\leq &\left\|M_{\delta}(T_{\vec b,\alpha}(\vec f))\right\|_{L^{q_0}\left(\omega\right)}\leq \left\|M_{\delta}^{\#}(T_{\vec b,\alpha}(\vec f))\right\|_{L^{q_0}\left(\omega\right)}\\
\leq &C\big\|\vec b\big\|_{\left({\rm BMO}\right)^m}\left\|M_{\varepsilon}(T_{\alpha}(\vec f))+\mathcal{M}_{\alpha,t}(\vec f)\right\|_{L^{q_0}\left(\omega\right)}\\
\leq &C\big\|\vec b\big\|_{\left({\rm BMO}\right)^m}\left(\left\|M_{\varepsilon}(T_{\alpha}(\vec f))\right\|_{L^{q_0}\left(\omega\right)}+\left\|\mathcal{M}_{\alpha,t}(\vec f)\right\|_{L^{q_0}\left(\omega\right)}\right)\\
\leq &C\big\|\vec b\big\|_{\left({\rm BMO}\right)^m}\left(\left\|M_{\varepsilon}^{\#}(T_{\alpha}(\vec f))\right\|_{L^{q_0}\left(\omega\right)}+\left\|\mathcal{M}_{\alpha,t}(\vec f)\right\|_{L^{q_0}\left(\omega\right)}\right)\\
\leq &C\big\|\vec b\big\|_{\left({\rm BMO}\right)^m}\left(\left\|\mathcal{M}_{\alpha,p_0'}(\vec f)\right\|_{L^{q_0}\left(\omega\right)}+\left\|\mathcal{M}_{\alpha,t}(\vec f)\right\|_{L^{q_0}\left(\omega\right)}\right)\\
\leq &C\big\|\vec b\big\|_{\left({\rm BMO}\right)^m}\left\|\mathcal{M}_{\alpha,t}(\vec f)\right\|_{L^{q_0}\left(\omega\right)},
\end{align*}
then we can get
\begin{align*}
\int_{{\mathbb{R}}^n}\left|T_{\vec b,\alpha}(\vec f)\left(x\right)\right|^{q_0}\omega\left(x\right)dx\leq C\big\|\vec b\big\|_{\left({\rm BMO}\right)^m}^{q_0}\int_{{\mathbb{R}}^n}\left|\mathcal{M}_{\alpha,t}(\vec f)\left(x\right)\right|^{q_0}\omega\left(x\right)dx
\end{align*}
holds for all $m$-tuples $\vec f=\left(f_1,\ldots,f_m\right)$ of bounded functions with compact support.

  Apply Lemma \ref{Lemma3.3} to the pair $\left(T_{\vec b,\alpha}(\vec f),M_{\alpha,t}(\vec f)\right)$ and obtain
\begin{equation}\label{(3.3)}
\left\|T_{\vec b,\alpha}(\vec f)\right\|_{q\left(\cdot\right)}\leq C\big\|\vec b\big\|_{\left({\rm BMO}\right)^m}\left\|\mathcal{M}_{\alpha,t}(\vec f)\right\|_{q\left(\cdot\right)}.
\end{equation}
We can get $0<\alpha_i<\frac{n}{p_{i,+}}<\frac{n}{t}$. Denote by $\frac{1}{q_i\left(\cdot\right)}=\frac{1}{p_i\left(\cdot\right)}-\frac{\alpha_i}{n}\left(i=1,\ldots,m\right)$, then $\frac{1}{q\left(\cdot\right)}=\frac{1}{q_1\left(\cdot\right)}+\cdots+\frac{1}{q_m\left(\cdot\right)}$. By Definition \ref{definition2.3}, it is easy to see that
\begin{equation}\label{(3.4)}
\mathcal{M}_{\alpha,t}(\vec f)\left(x\right)\leq\prod_{i=1}^m\mathscr{M}_{\alpha_i,t}\left(f_i\right)\left(x\right)~{\rm for} ~x\in{\mathbb{R}}^n.
\end{equation}
Let $\beta_i=\alpha_i t$. The fact $\frac{p_i}{p_0'}\left(\cdot\right)\in \mathscr{P}^{log}\left({\mathbb{R}}^n\right)$ implies that $\frac{p_i}{t}\left(\cdot\right)\in \mathscr{P}^{log}\left({\mathbb{R}}^n\right)$, $i=1,\ldots,m$. From \eqref{(3.3)}, \eqref{(3.4)}, Lemma \ref{Lemma3.2} and Lemma \ref{Lemma3.4}, it follows that
\begin{align*}
\left\|T_{\vec b,\alpha}(\vec f)(x)\right\|_{q(\cdot)}\leq &C\big\|\vec b\big\|_{({\rm BMO})^m}\left\|\mathcal{M}_{\alpha,t}(\vec f)\right\|_{q(\cdot)}\leq C\big\|\vec b\big\|_{({\rm BMO})^m}\prod_{i=1}^m\left\|\mathscr{M}_{\alpha_i,t}(f_i)\right\|_{q_i(\cdot)}\\
=&C\big\|\vec b\big\|_{({\rm BMO})^m}\prod_{i=1}^m\left\|\left[\mathscr{M}_{{\beta}_i}(f_i^{t})\right]^{\frac{1}{t}}\right\|_{q_i(\cdot)}=C\big\|\vec b\big\|_{({\rm BMO})^m}\prod_{i=1}^m\left\|\mathscr{M}_{{\beta}_i}(f_i^{t})\right\|_{\frac{q_i}{t}(\cdot)}^{\frac{1}{t}}\\
\leq &C\big\|\vec b\big\|_{({\rm BMO})^m}\prod_{i=1}^m\left\||f_i|^{t}\right\|_{\frac{p_i}{t}(\cdot)}^{\frac{1}{t}}=C\big\|\vec b\big\|_{({\rm BMO})^m}\prod_{i=1}^m\left\|f_i\right\|_{p_i(\cdot)}.
\end{align*}
Thus, Theorem \ref{Theorem 3.2} is proved.
\end{proof}

\begin{corollary}\label{Corollary 3.1}
\rm When the generalized kernel condition goes back to the Dini kernel condition, the boundedness of multilinear commutators generalized by $m$-linear fractional integral operators with Dini's type kernels and ${\rm BMO}$ functions on variable exponent Lebesgue spaces remains new.
\end{corollary}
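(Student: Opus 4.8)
The content of Corollary \ref{Corollary 3.1} is that Theorem \ref{Theorem 3.2}, once the generalized smoothness condition \eqref{1.5} is specialized back to the classical Dini smoothness \eqref{1.2}, produces a variable-exponent estimate for multilinear commutators of fractional integral operators that is not contained in the existing literature. The plan is therefore not to run a new estimate but to check that Theorem \ref{Theorem 3.2} applies in the Dini framework and then to record the novelty. First I would invoke Remark \ref{remark 1.1}: if the kernel $K_\alpha$ satisfies the size condition \eqref{1.1} and the Dini smoothness \eqref{1.2} for a non-negative, non-decreasing $\omega$ with $0<\omega(1)<\infty$, then, with the choice $C_k=\omega(2^{-k})$, the condition \eqref{1.5} holds for every $1<p_0<\infty$; together with the standing weak-type mapping hypothesis this exhibits $T_\alpha$ as an $m$-linear fractional integral operator with generalized kernel in the sense of Definition \ref{definition1.2}.

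Next I would translate the summability hypothesis of Theorem \ref{Theorem 3.2}. With $C_k=\omega(2^{-k})$, the requirement $\sum_{k=1}^\infty kC_k<\infty$ becomes $\sum_{k=1}^\infty k\,\omega(2^{-k})<\infty$, which, by the elementary equivalence recalled in Section \ref{sec1}, is the same as $\int_0^1\omega(t)(1+\log t^{-1})\frac{dt}{t}<\infty$, that is, as $\omega\in\log$-${\rm Dini}(1)$. Under this assumption all the hypotheses of Theorem \ref{Theorem 3.2} are in force (the remaining ones, namely $\frac{p_i}{p_0'}(\cdot)\in\mathscr{P}^{log}({\mathbb{R}}^n)$, $0<\alpha_i<n/p_{i,+}$, $q(\cdot)\in\mathscr{B}({\mathbb{R}}^n)$ and $\vec b\in{\rm BMO}^m$, are carried over verbatim), so Theorem \ref{Theorem 3.2} yields
\begin{align*}
\|T_{\vec b,\alpha}(\vec f)\|_{L^{q(\cdot)}({\mathbb{R}}^n)}\leq C\|\vec b\|_{({\rm BMO})^m}\prod_{j=1}^m\|f_j\|_{L^{p_j(\cdot)}({\mathbb{R}}^n)}
\end{align*}
for the multilinear commutator of an $m$-linear fractional integral operator with Dini-type kernel.

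Finally, to justify that this estimate is new, I would compare with the references recalled in the introduction: Wu and Zhang \cite{WZ2023} obtained the variable-exponent bounds only for the operators $T_\alpha$ themselves, not for their ${\rm BMO}$ commutators, whereas the multilinear commutator theorems of Lu and Zhang \cite{LZ2014} and Zhang and Sun \cite{ZS2016} concern the non-fractional case $\alpha=0$ on weighted Lebesgue spaces; none of these contains the displayed inequality. The only non-routine ingredients are the implication from \eqref{1.2} to \eqref{1.5} with $C_k=\omega(2^{-k})$ and the equivalence between $\sum_k k\omega(2^{-k})<\infty$ and $\omega\in\log$-${\rm Dini}(1)$, and both are already in place (Remark \ref{remark 1.1} and Section \ref{sec1}); hence there is no further analytic obstacle and Corollary \ref{Corollary 3.1} follows at once from Theorem \ref{Theorem 3.2}.
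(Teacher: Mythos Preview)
Your proposal is correct and matches the paper's approach: the paper states Corollary \ref{Corollary 3.1} without proof, treating it as an immediate consequence of Theorem \ref{Theorem 3.2} together with Remark \ref{remark 1.1}, which is exactly the route you spell out. Your explicit identification of $\sum_k kC_k<\infty$ with the $\log$-${\rm Dini}(1)$ condition and the comparison with \cite{WZ2023}, \cite{LZ2014}, \cite{ZS2016} is more detailed than what the paper provides, but entirely in the same spirit.
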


\section*{References}
\begin{enumerate}
	\setlength{\itemsep}{-2pt}
    \bibitem[1]{BHP2006}A. Bernardis, S. Hartzstein and G. Pradolini, Weighted inequalities for commutators of fractional integrals on spaces of homogeneous type, J. Math. Anal. Appl., 322 (2006), 825--846.
    \bibitem[2]{BHS2011}B. Bongioanni, E. Haboure and O. Salinas, Classes of weights related to Schrodinger operators, J. Math. Anal. Appl., 373 (2011), 563--579.
    \bibitem[3]{CCF2007}C. Capone, D. Cruz-Uribe and A. Fiorenza, The fractional maximal operator and fractional integrals on variable $L^p$ spaces, Rev. Mat. Iberoam., 23 (2007), 743--770.
    \bibitem[4]{CW2013}S. Chen and H. Wu, Multiple weighted estimates for commutators of multilinear fractional integral operators, Sci. China Math., 56 (2013), 1879--1894.
    \bibitem[5]{CX2010}X. Chen and Q. Xue, Weighted estimates for a class of multilinear fractional type operators, J. Math. Anal. Appl., 362 (2010), 355--373.
	 \bibitem[6]{CM1975}R. R. Coifman and Y. Meyer, On commutators of singular integrals and bilinear singular integrals, Trans. Amer. Math. Soc., 212 (1975), 315--331.
    \bibitem[7]{CM1978}R. R. Coifman and Y. Meyer, Commutateurs d'int\'{e}grales singuli\`{e}res et op\'{e}rateurs multilinéaires, Ann. Inst. Fourier (Grenoble), 28 (1978), 177--202.
    \bibitem[8]{CFN2003}D. Cruz-Uribe, A. Fiorenza and C. J. Neugebauer, The maximal function on variable $L^p$ spaces. Ann. Acad. Sci. Fenn. Math., 28 (2003), 223--238.
    \bibitem[9]{CF2006}D. Cruz-Uribe, A. Fiorenza, J. M. Martell and C. P\'{e}rez, The boundedness of classical operators on variable $L^p$ spaces, Ann. Acad. Sci. Fenn. Math., 31 (2006), 239--264.
    \bibitem[10]{CF2013}D. Cruz-Uribe and A. Fiorenza, Variable Lebesgue spaces, foundations and harmonic analysis, Appl. Numer. Harmon. Anal., Birkh\"{a}user/Springer, Heidelberg, 2013, x+312 pp.
    \bibitem[11]{CMN2019}D. Cruz-Uribe, K. Moen and H. V. Nguyen, Multilinear fractional Calder\'{o}n-Zygmund operators on weighted Hardy spaces, Houston J. Math., 45 (2019), 853--871.
    \bibitem[12]{DPR2018}E. Dalmasso, G. Pradolini and W. Ramos, The effect of the smoothness of fractional type operators over their commutators with Lipschitz symbols on weighted spaces, Fract. Calc. Appl. Anal., 21 (2018), 628--653.
    \bibitem[13]{D2005}L. Diening, Maximal function on Musielak-Orlicz spaces and generalized Lebesgue spaces, Bull. Sci. math., 129 (2005), 657--700.
    \bibitem[14]{DH2011}L. Diening, P. Harjulehto, P. H\"{a}st\"{o} and M. R${\rm \mathring{u}}$\v{z}i\v{c}ka, Lebesgue and Sobolev spaces with variable exponents, Springer, Heidelberg, 2011, x+509 pp.
    \bibitem[15]{FS1972}C. Fefferman and E. M. Stein, $H^p$ spaces of several variables, Acta Math., 129 (1972), 137--193.
    \bibitem[16]{FGL2023}Z. Fu, L. Grafakos, Y. Lin, Y. Wu and S. Yang, Riesz transform associated with the fractional Fourier transform and applications in image edge detection, Appl. Comput. Harmon. Anal., 66 (2023), 211--235.
    \bibitem[17]{FLY2024}Z. Fu, Y. Lin, D. Yang and S. Yang, Fractional Fourier transforms meet Riesz potentials and image processing, SIAM J. Imaging Sci., 17 (2024), 476--500.
    \bibitem[18]{GLY2024}L. Gao, Y. Lin and S. Yang, Multiple weighted estimates for multilinear commutators of multilinear singular integrals with generalized kernels, J. Korean Math. Soc., 61 (2024), 207--226.
    \bibitem[19]{G1992}L. Grafakos, On multilinear fractional integrals, Studia Math., 102 (1992), 49--56.
    \bibitem[20]{GT2002}L. Grafakos and R. H. Torres, Multilinear Calder\'{o}n-Zygmund theory, Adv. Math., 165 (2002), 124--164.
	 \bibitem[21]{GH2014}L. Grafakos and D. He, Multilinear Calder\'{o}n-Zygmund operators on Hardy spaces, J. Math. Anal. Appl., 416 (2014), 511--521.
    \bibitem[22]{KS1999}C. E. Kenig and E. M. Stein, Multilinear estimates and fractional integration, Math. Res. Lett., 6 (1999), 1--15.
    \bibitem[23]{KR1991}O. Kov\'{a}\v{c}ik, J. R\'{a}kosn\'{ı}k, On spaces $L^{p(x)}$ and $W^{k,p(x)}$, Czechoslovak Math. J., 41 (1991), 592--618.
    \bibitem[24]{LA2010}K. Langley and S. J. Anderson, The Riesz transform and simultaneous representations of phase energy and orientation in spatial vision, Vision Research, 50 (2010), 1748--1765.
	 \bibitem[25]{LOP2009}A. K. Lerner, S. Ombrosi, C. P\'{e}rez, R. H. Torres and R. Gonz\'{a}lez, New maximal functions and multiple weights for the multilinear Calder\'{o}n-Zygmund theory, Adv. Math., 220 (2009), 1222--1264.
    \bibitem[26]{LL2007}Y. Lin and S. Lu, Boundedness of multilinear singular integral oprators on Hardy and Herz-type spaces, Hokkaido Math. J., 36 (2007), 585--613.
    \bibitem[27]{LX2017}Y. Lin and Y. Xiao, Multilinear singular integral operators with generalized kernel and their multilinear commutators, Acta Math. Sin. (Engl. Ser.), 33 (2017), 1443--1462.
	  \bibitem[28]{LZ2017}Y. Lin and N. Zhang, Sharp maximal and weighted estimates for multilinear iterated commutators of multilinear integrals with generalized kernels, J. Inequal. Appl., 276 (2017), 1--15.
    \bibitem[29]{LZ2014}G. Lu and P. Zhang, Multilinear Calder\'{o}n-Zygmund operators with kernels of Dini's type and applications, Nonlinear Anal., 107 (2014), 92--117.
    \bibitem[30]{L1995}S. Lu, Four Lectures on Real $H^p$ Spaces, World Scientific Publishing Co., Inc., River Edge, NJ, 1995, viii+217 pp.
    \bibitem[31]{LDY2007}S. Lu, Y. Ding and D. Yan, Singular integrals and related topics, World Scientific Publishing Co. Pte. Ltd., Hackensack, NJ, 2007, viii+272 pp.
    \bibitem[32]{M2009}K. Moen, Weighted inequalities for multilinear fractional integral operators, Collect. Math., 60 (2009), 213--238.
    \bibitem[33]{P2010}G. Pradolini, Weighted inequalities and pointwise estimates for the multilinear fractional integral and maximal operators, J. Math. Anal. Appl., 367 (2010), 640--656.
    \bibitem[34]{SL2012}Z. Si and S. Lu, Weighted estimates for iterated commutators of multilinear fractional operators, Acta Math. Sin. (Engl. Ser.), 28 (2012), 1769--1778.
    \bibitem[35]{WX2019}H. Wang and J. Xu, Multilinear fractional integral operators on central Morrey spaces with variable exponent, J. Inequal. Appl., 1 (2019), 1--23.
    \bibitem[36]{WYL2023}B. Wei, S. Yang and Y. Lin, Multiple weighted norm inequalities for multilinear strongly singular integral operators with generalized kernels, Bull. Iran. Math. Soc., 49 (2023), 1--30.
	 \bibitem[37]{WZ2023}J. Wu and P. Zhang, Multilinear fractional Calder\'{o}n-Zygmund operators with Dini's type kernel, arxiv: 2307.01402.
    \bibitem[38]{X2013}Q. Xue, Weighted estimates for the iterated commutators of multilinear maximal and fractional type operators, Studia Math., 217 (2013), 97--122.
    \bibitem[39]{YLL2024}S. Yang, P. Li and Y. Lin, Multiple weight inequalities for multilinear singular integral operator with generalized kernels, Adv. Math. (China), 53 (2024), 162--176.
	 \bibitem[40]{Y1985}K. Yabuta, Generalizations of Calder\'{o}n-Zygmund operators, Stud. Math., 82 (1985), 17-31.
	 \bibitem[41]{ZL2012}L. Zhang and H. Li, Local image patterns using Riesz transforms: with applications to palmprint and finger-knuckle-print recognition, Image and Vision Computing, 30 (2012).
	 \bibitem[42]{ZZM2010}L. Zhang, L. Zhang and X. Mou, RFSIM: A feature based image quality assessment metric using Riesz transforms, IEEE International Conference on Image Processing. IEEE, 2010.
	 \bibitem[43]{ZS2016}P. Zhang and J. Sun, Commutators of multilinear Calder\'on-Zygmund operators with kernels of Dini's type and applications, J. Math. Inequal., 13 (2019), 1071--1093.

\end{enumerate}
\bigskip

\noindent  

\smallskip

\noindent School of Science, China University of Mining and
Technology, Beijing 100083,  People's Republic of China

\smallskip

\noindent{\it E-mails:} \texttt{linyan@cumtb.edu.cn} (Y. Lin)

\noindent\phantom{{\it E-mails:} }\texttt{zhaoyuhang@student.cumtb.edu.cn} (Y. Zhao)

\noindent\phantom{{\it E-mails:} }\texttt{yangshuhui@student.cumtb.edu.cn} (S. Yang)

\end{document}